\definecolor{NoteColor}{rgb}{1,0,0}
\newcommand{\Hom}{\operatorname{Hom}}
\newcommand{\SL}{\operatorname{SL}}
\newcommand{\PSL}{\operatorname{PSL}}
\newcommand{\OSp}{\operatorname{OSp}}
\newcommand{\SO}{\operatorname{SO}}
\newtheorem{theorem}{\rm\bf Theorem}[section]
\newtheorem{proposition}[theorem]{\rm\bf Proposition}
\newtheorem{lemma}[theorem]{\rm\bf Lemma}
\newtheorem{corollary}[theorem]{\rm\bf Corollary}
\newtheorem*{theorem 1}{\rm\bf Proposition 1}
\newtheorem*{theorem 2}{\rm\bf Proposition 2}
\theoremstyle{definition}
\newtheorem{definition}[theorem]{\rm\bf Definition}
\theoremstyle{remark}
\newtheorem{remark}[theorem]{\rm\bf Remark}
\begin{document}

\title{Super McShane identity}
\author{Yi Huang}
\address {Yi Huang, \newline
Yau Mathematical Sciences Center\newline
Tsinghua University, Beijing, China\newline 
yihuangmath@tsinghua.edu.cn}

\author{Robert C. Penner}
\address {Robert C. Penner,\newline
Institut des Hautes \'{E}tudes Scientifiques\newline
35 route des Chartres\newline
Le Bois Marie\newline
91440 Bures-sur-Yvette, France;\newline
Mathematics Department,
UCLA\newline
Los Angeles, CA 90095 USA
\newline
rpenner@ihes.fr}

\author{Anton M. Zeitlin}
\address {Anton M. Zeitlin,\newline
 Department of Mathematics\newline
Louisiana State University\newline
Baton Rouge LA 70803 USA; \newline
IPME RAS, St. Petersburg\newline
zeitlin@lsu.edu\newline
http://math.lsu.edu/$\sim$zeitlin}

\subjclass{[2010] Primary 57M05, Secondary 37E30}



\begin{abstract}
The authors derive a McShane identity for once-pun\-ctured super
tori.  Relying upon earlier work  on super Teichm\"uller theory by 
the last two-named authors, they further develop the supergeometry of these surfaces and establish the asymptotic growth rate of their length spectra.
\end{abstract}

\maketitle

\section*{Introduction}

Greg McShane proved in his thesis \cite{greg} the remarkable fact that for any hyperbolic structure (complete metric of constant Gaussian curvature -1) on a once-punctured torus $F$, we have
\[
\frac{1}{2}=\sum_\gamma \frac{1}{e^{\ell_\gamma}+1}\, ,
\]
where the sum is over all the simple closed geodesics $\gamma$ in $F$ and $\ell_\gamma$ is the hyperbolic length of $\gamma$. Maryam Mirzakhani generalized this in her thesis \cite{maryam} to the setting of hyperbolic structures on bordered hyperbolic surfaces and famously employed this to derive an effective recursion for the computation of Weil-Petersson volumes of Riemann moduli spaces \cite{Msim}, prove Witten's conjecture in a novel manner \cite{Mwei} and to establish the asymptotic growth rates of simple closed geodesics on any finite-type hyperbolic surface \cite{Mgro}.\medskip

Relevant to our discussion here, consider the moduli space of flat $G$-connections \cite{morgan} on a surface $F$ of negative Euler characteristic, or equivalently the space of conjugacy classes of representations from $\pi_1(F)$ into a Lie group $G$, where one imposes certain conditions on the holonomy around the punctures, if any. These conditions help to control the singularities which arise on the moduli space obtained from taking the quotient of the representation variety by the conjugation action of G. In the special case where $G=\PSL_2({\mathbb R})$, the maximal dimensional component of this moduli space is the classical Teichm\"uller space of $F$ \cite{goldman}. More generally, for $G$ a split real simple Lie group, certain components of these moduli spaces provide the higher Teichm\"uller spaces (see, e.g., \cite{wienhard} for an overview).  In this context, McShane's identity is the statement that the function given by $\sum_\gamma (e^{\ell_\gamma}+1)^{-1}$ is constant one half on Teichm\"uller space.  A wealth of generalized McShane-type identities have followed (see \cite{britan} for a survey), and recent work \cite{yi+zhe} has provided an analogous identity for positive $G=\SL_3({\mathbb R})$ representations with unipotent boundary holonomy, as well as identities for positive $G=\SL_N({\mathbb R})$ representations with loxodromic boundary holonomy.\medskip

In classical {\sl bosonic} mathematics, the product of real and/or complex numbers commute. Parallel, or perhaps perpendicular, to this familiar setting, lies a world wherein certain further {\sl fermionic} variables anti-commute. The mathematical formalism rests on Grassmann algebras (see Section~\ref{sec:grassmann}) with their grading into even and odd variables respectively corresponding to bosonic and fermionic quantities. This extension is sometimes called {\sl super mathematics}, and derives from the fundamental physical principle  that matter is composed of fermions with the forces mediating them comprised of bosons or, in the language of quantum field theory, wave functions respectively satisfying Fermi-Dirac or Bose-Einstein statistics.  This distinction is basic to contemporary physics and provides a seductive research frontier for mathematics.  In a companion paper \cite{hyper}, from which the current paper is independent, we provide for the first time a detailed mathematical discussion of super hyperbolic geometry.\medskip

Unifying these several threads, one might naturally extend to the case of real split simple Lie supergroups, whose Lie superalgebras were classified by Victor Kac \cite{viktor} in the 1980s.  The simplest such supergroup is the orthosymplectic group $G=\OSp(1|2)$ (see \S\ref{sec:superdec}) which is, morally speaking,  the smallest Lie supergroup containing $\SL_2({\mathbb R})$.  In particular, a representation $\bar{\rho}$ of $\pi_1(F)$ into $\OSp(1|2)$ restricts to a representation
$\rho$ into $\SL_2({\mathbb R})$, and if $\rho$ covers a Fuchsian representation into $\PSL_2({\mathbb R})$, then 
$\rho$  (and hence $\bar\rho$) provides a spin structure on the underlying surface according to
Sergey Natanzon \cite{sergey}.\medskip

In previous work \cite{IPZ1, IPZ2, PZ}, a satisfactory description and parametri\-zation was given of the super Teichm\"uller space for $G=\OSp(1|2)$ pertinent here as well as the higher super Teichm\"uller space for $G=\OSp(2|2)$.  Using these coordinates and mimicking a proof of McShane's identity due to Brian Bowditch \cite{Bpro, Bmar}, we prove the super McShane identity, to wit
\begin{align}
\frac{1}{2}
=\sum_\gamma
\left(
\frac{1}{e^{\ell_\gamma}+1}
+\frac{W_\gamma}{4}~\frac{\sinh\frac{\ell_\gamma}{2}}
{\cosh^2\frac{\ell_\gamma}{2}}
\right),
\end{align}
where $\ell_\gamma$ denotes the super length of $\gamma$, which ranges over the simple closed super geodesics in $F$, and $W_\gamma$ is a 
bosonic product of the two fermionic coordinates on super Teichm\"uller space with order determined by the underlying spin structure on $F$.\medskip

Let us make special mention of the paper \cite{stawit} by Douglas Stanford and Edward Witten, which provides a clear and explicit roadmap for the theory of super McShane identities and Mirzakhani-type super volume recursions for Riemann super moduli spaces. The heuristically \cite{wit} derived formulae they obtain is internally consistent with their matrix model predictions, and is strong support that their formula is correct. In the present paper, we rigorously establish the McShane identity for once-punctured supertori. This is, to the best of our knowledge, the first successful instance of nontrivial superanalysis being conducted on superalgebraic structures to produce a formula with clear supergeometric interpretation. As a consequence, we are rewarded by results on the asymptotic growth rates on super lengths as well as a general bosonic versus fermionic comparison theorem for this simple length spectrum.\medskip

$\S$\ref{sec:dec} is a review of classical decorated Teichm\"{u}ller theory, with particular emphasis on the case when $F$ is the once-punctured torus. $\S$\ref{sec:superdec} introduces basic notions in super arithmetic, which leads into an overview of recent work \cite{IPZ1,IPZ2, PZ} regarding decorated super-Teichm\''{u}ller theory, again with a focus on the case when $F$ is the once-punctured torus. $\S$\ref{sec:supergeo} covers supergeodesics, their super lengths and  hyperbolic elements of the super Fuchsian group, again emphasizing the once-punctured super torus. $\S$\ref{sec:superlamb} introduces the notion of super semi-perimeter, an important invariant of a decorated once-punctured super torus, and explores recursive structure among the super $\lambda$-lengths critical to our proof. $\S$\ref{sec:bodysoul} covers preparatory ingredients for the main proof, including super-Markoff triples and the combinatorial dynamics it imposes on the dual complex of the Farey triangulation. The keystone of this section is the ``body-soul" comparison theorem, which controls the super data in terms of classical. $\S$\ref{sec:proof} is devoted to the proof of the super McShane identity.

\medskip

\subsection*{Acknowledgements} The authors thank Ivan Ip and Edward Witten for valuable input. We also wish to thank the Mathematisches Forschungsinstitut Oberwolfach and the Institut des Hautes \'{E}tudes Scientifiques for hospitality, the former during the inception of this work and latter for the completion. A.M.Z. is partially supported by Simons Collaboration Grant, Award ID: 578501. 
Y.H. was partially supported by the China Postdoctoral Science Foundation grant number 2017T100058.




\medskip

\section{Decorated Teichm\"uller space}
\label{sec:dec}
Fix an oriented surface $F=F_g^s$ of genus $g$ with $s\geq1$ punctures and negative Euler characteristic $2-2g-s<0$. 
We refer to the isotopy class of any homotopically non-trivial simple arc connecting punctures
in $F$ as an \emph{ideal arc}. 
An \emph{ideal triangulation} of $F$ is a maximal collection of ideal arcs with pairwise disjoint representatives.
In the presence of a hyperbolic metric on $F$, we identify each ideal arc with its unique geodesic representative for expositional simplicity. 
The \emph{(pure) mapping class group} $MC(F)$ is defined as the group of isotopy classes of orientation-preserving 
homeomorphisms of $F$ which preserve each puncture.

\medskip

\subsection{Teichm\"uller space}

The \emph{Teichm\"uller space} $T(F)$ of $F$ is the space of $\PSL_2(\mathbb{R})$ conjugacy classes of  \emph{ (type-preserving)} \emph{Fuchsian representations}, that is
\[
T(F)~=~\Hom'(\pi_1(F),G)/G,\quad\text{ for }G=\PSL_2(\mathbb{R}),
\] 
where $\Hom'$ denotes the space of discrete and faithful representations $\rho: \pi_1(F)\rightarrow G$ which map
 puncture-parallel loops in $F$ to parabolic elements of $G$. Each such conjugacy class $[\rho]$  of representation $\rho$
 determines a hyperbolic structure on $F$ via $F=\mathbb{D}/\rho(\pi_1(F))$.

\medskip

\subsection{Decorated Teichm\"uller space}

A basic family of invariants for us are the \emph{horocycles} in $F$, namely, immersed closed curves 
of constant geodesic curvature $1$. Equivalently, horocycles are curves which lift to a horocycle in the universal cover 
$\mathbb{D}$ tangent to $\partial\mathbb{D}$ at a fixed point of a parabolic element of $\rho(\pi_1(F))$.  The hyperbolic
length of a horocycle in $F$ about the corresponding puncture determines it uniquely, and horocycles in $F$ of
length at most unity are embedded.

\medskip

The \emph{decorated Teichm\"uller space} $\tilde{T}(F)$ is the fiber bundle over $T(F)$ where the fiber over each point $[\rho]\in T(F)$ is the
space comprised of all possible specifications of $s$ (potentially intersecting and not necessarily embedded) horocycles, one for each puncture. 
We refer to each such collection of horocycles as a \emph{decoration}, and note that the hyperbolic lengths of these $s$ horocycles parameterize each fiber. 
In particular, the decorated Teichm\"uller space $\tilde{T}(F)$ is a principal $\mathbb{R}_{>0}^s$-bundle. 
We refer to each of these $s$ horocycle length functions parameterizing the $\mathbb{R}_{>0}^s$ fiber as a \emph{perimeter}
and define the action of the mapping class group $MC(F)$ to act trivially on perimeters.

\subsection{$\lambda$-length coordinates}

Given a pair $h,h'\subset \mathbb{D}$ of horocycles, we define the \emph{$\lambda$-length}
$\lambda(h,h')$ as the exponential of one half the signed hyperbolic distance between $h,h'$, 
with positive sign taken if and only if $h$ and $h'$ are disjoint. This construction evidently defines invariants of
ideal arcs on decorated surfaces. In fact, $\lambda$-lengths define global coordinates on the decorated Teichm\"uller space $\tilde{T}(F)$:

\begin{theorem}[{\cite{Pdec,Pbook}}] 
\label{thm:coords}
Fix an ideal triangulation $\Delta$ of $F=F_g^s$, where
$2-2g-s<0$.  Then the assignment 
\[
\tilde{T}(F)\to\mathbb{R}_{>0}^\Delta\cong\mathbb{R}_{>0}^{6g-6+3s}
\]
of $\lambda$-lengths is a real-analytic diffeomorphism onto $\mathbb{R}_{>0}^\Delta$. 
\end{theorem}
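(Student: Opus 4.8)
The plan is to construct an explicit real-analytic inverse to the $\lambda$-length map. The natural arena is the hyperboloid model: identify $\mathbb{D}$ with the component $\mathbb{H}^2=\{x\in\mathbb{R}^{2,1}:\langle x,x\rangle=-1,\ x_0>0\}$ of the unit hyperboloid for the form $\langle\cdot,\cdot\rangle$ of signature $(-,+,+)$, so that $\mathrm{Isom}^+(\mathbb{H}^2)\cong\SO^+(2,1)$ acts linearly. Under a suitable normalization, horocycles in $\mathbb{H}^2$ correspond bijectively and $\SO^+(2,1)$-equivariantly to vectors $u$ on the upper light cone $L^+=\{u\ne0:\langle u,u\rangle=0,\ u_0>0\}$, with the pairing computing $\lambda$-lengths: $\lambda(h_u,h_{u'})^2=-\langle u,u'\rangle$, where $h_u$ is the horocycle attached to $u$, and the sign convention of the statement is encoded by the fact that $h_u,h_{u'}$ are disjoint exactly when $-\langle u,u'\rangle>1$. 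Granting this linearization, the forward map is patently well defined and real-analytic: a decorated hyperbolic structure lifts to a $\rho(\pi_1 F)$-equivariant assignment $v\mapsto u_v\in L^+$ to the ideal vertices of the lifted ideal triangulation $\tilde\Delta$, varying real-analytically with $([\rho],\text{decoration})$, and the $\lambda$-length of an edge $\{v,w\}$ of $\Delta$ is $\sqrt{-\langle u_v,u_w\rangle}>0$, positivity holding because horocycles lifted from distinct punctures are disjoint.

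For surjectivity and injectivity I would reconstruct the decorated structure triangle by triangle. The crucial local statement is that for any ordered triple of positive reals there is a triple of vectors in $L^+$ realizing them as the values $\sqrt{-\langle u_i,u_j\rangle}$, unique up to $\SO^+(2,1)$; equivalently, there is a hyperbolic ideal triangle decorated by a horocycle at each vertex, unique up to isometry, whose three sides carry the prescribed $\lambda$-lengths. Given a point of $\mathbb{R}_{>0}^\Delta$, fix a triangle $t_0$ of $\tilde\Delta$, realize it as such a decorated ideal triangle in $\mathbb{H}^2$, and inductively attach each adjacent triangle across its shared edge: the already-assigned $\lambda$-length of that edge pins down the attaching isometry and the decorating horocycle at the new ideal vertex uniquely. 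This produces a developing immersion $D:\tilde F\to\mathbb{H}^2$, a holonomy $\rho:\pi_1(F)\to\PSL_2(\mathbb{R})$ for which $D$ is equivariant, and a $\rho$-equivariant family of horocycles. That the reconstruction is independent of $\Delta$ --- hence that it inverts the $\lambda$-length map --- follows from the Ptolemy relation $\lambda(e)\lambda(e')=\lambda(a)\lambda(c)+\lambda(b)\lambda(d)$ for the two diagonals $e,e'$ of an ideal quadrilateral with cyclic sides $a,b,c,d$, since it shows each elementary flip of $\Delta$ carries the new coordinate vector to the one already reconstructed, and any two ideal triangulations are joined by finitely many flips; real-analyticity of the inverse then comes from the explicit hyperbolic-trigonometric formulas giving the attaching data and the horocyclic-segment lengths as functions of the $\lambda$-lengths.

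The step I expect to be the main obstacle is verifying that this reconstructed object genuinely lies in $\tilde T(F)$: that $\rho$ is discrete and faithful, that the hyperbolic metric is complete, and that the developed horocycles descend to honest (possibly only immersed) horocycles about the punctures with parabolic peripheral holonomy. For discreteness I would argue that, since each developed triangle is a non-degenerate ideal triangle and consecutive triangles meet along full edges, the developed triangles tile a simply connected complete hyperbolic surface, necessarily all of $\mathbb{H}^2$; hence $D$ is a homeomorphism and $\rho(\pi_1 F)$ acts freely and properly discontinuously. The delicate part is the cusps: one must check that the horocyclic arcs cut off in the triangles around a given puncture concatenate consistently --- equivalently, that the peripheral element is parabolic fixing the common ideal endpoint of those triangles. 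This is exactly where the ``$h$-length'' of each horocyclic segment, a prescribed hyperbolic-trigonometric function of the three $\lambda$-lengths of its triangle, must be used: the total $h$-length around a puncture is finite and equals the associated perimeter, which forces completeness at that cusp and parabolicity of the peripheral holonomy. Once this is in place, $[\rho]$ together with its decoration is the required preimage and the map is a bijection with real-analytic inverse, hence a real-analytic diffeomorphism. The target dimension is the standard count: an ideal triangulation of $F_g^s$ has $|\Delta|=6g-6+3s$ edges, by the Euler-characteristic identity $V-E+T=2-2g$ with $V=s$ and $2E=3T$, so $\mathbb{R}_{>0}^\Delta\cong\mathbb{R}_{>0}^{6g-6+3s}$.
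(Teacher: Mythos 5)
Your proposal develops exactly the approach the paper itself sketches (and defers to \cite{Pdec,Pbook} for): pass to Minkowski $3$-space, identify decorated ideal points with light-cone vectors, recursively build a $\pi_1(F)$-equivariant lift $\ell_\infty:\tilde\Delta_\infty\to L^+$ triangle by triangle, and recover the holonomy and decoration. So you are on the same route, and you correctly locate the main difficulty at the cusps.

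One step, however, does not prove what you need. You claim that finiteness of the total $h$-length around a puncture ``forces completeness at that cusp and parabolicity of the peripheral holonomy.'' But that finiteness is automatic --- the fan around a cusp consists of finitely many triangles of $\Delta$, each contributing one finite $h$-length --- and a finite total $h$-length is equally consistent with a hyperbolic peripheral element whose developed fan spirals toward an axis; thus finiteness alone does not single out parabolic holonomy, and ``equals the perimeter'' is a definition rather than a constraint. The clean argument, which is already latent in your own setup, is the one via equivariance of the light-cone lift: the ideal vertex $p$ over a puncture receives a single lift $u_p\in L^+$, so equivariance gives $\rho(\gamma)u_p=u_p$ for the peripheral loop $\gamma$. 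A nontrivial element of $\SO^+(2,1)$ whose $1$-eigenspace contains a null vector cannot be hyperbolic (whose $1$-eigenvector is spacelike) or elliptic (whose $1$-eigenvector is timelike), hence is parabolic. Once parabolicity is established this way, the rest of your developing-map and completeness argument goes through; this is also the step that guarantees the developed image is all of $\mathbb{H}^2$ rather than, say, a half-plane left uncovered by a spiraling fan.
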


To clarify this, we assign a $\lambda$-length coordinate to each
edge of $\Delta$. There are $6g-6+3s$ edges in total, yielding a global chart 
$\mathbb{R}_{>0}^{6g-6+3s}$ on $\tilde{T}(F)$.
We hereafter conflate the name for an edge with its $\lambda$-length for convenience.

\subsection{$h$-lengths vs. perimeters}

Closely related parameters on $\tilde{T}(F)$ that are also central in our work are the \emph{$h$-lengths}, one assigned to
each pair $(v,t)$ in $\Delta$, where $v$ is one of the three ideal vertices of the decorated triangle $t$ complementary to $\Delta$, defined to be the hyperbolic length of the horocyclic segment in the decoration between the ideal edges of $t$ incident on $v$. Direct calculation shows that if $a,b,c$ are the consecutive edges of a decorated triangle $t$, then the respective $h$-lengths of the opposite decorated vertices are given by
\[
\alpha=\frac{a}{bc},\quad\beta=\frac{b}{ac},\quad\gamma=\frac{c}{ab}.
\]
The above formulae allow us to express explicitly the perimeter for any given horocycle
decorating a punctured surface as the sum of the $h$-lengths along the specified horocycle.
In particular, perimeters are rational functions in $\lambda$-lengths with positive integer coefficients. 


\subsection{Ideal cell decomposition of $\tilde{T}(F)$}

As a generalization of an ideal triangulation, an \emph{ideal cell decomposition} $\Delta$ of $F$ is a collection of pairwise disjoint ideal arcs, no two of which are isotopic, so that each component of $F-\cup\Delta$ is an ideal polygon. Equivalently, an ideal cell decomposition is a subset of an ideal triangulation
so that each complementary region is simply connected.

\begin{theorem}[{\cite{Pdec,Pbook}}] \label{thm:icd} For any $F=F_g^s$ of negative Euler characteristic, there is a $MC(F)$-invariant ideal cell decomposition of $\tilde{T}(F)$ isomorphic to the subset of the geometric realization of the arc complex consisting of cells labeled by ideal cell decompositions of $F$.
\end{theorem}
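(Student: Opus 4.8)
The plan is to construct the ideal cell decomposition of $\tilde T(F)$ directly from the convex hull construction underlying Theorem~\ref{thm:coords}, following Penner's original approach. First I would recall that, via the hyperboloid model, a decorated hyperbolic structure lifts to a $\rho(\pi_1(F))$-invariant discrete set of points on the positive light cone in Minkowski space $\mathbb{R}^{2,1}$, one point for each horocycle in the universal cover, with the $\lambda$-length of an ideal arc realized as a Minkowski bilinear form evaluated on the corresponding pair of light-cone points. The key geometric object is the closed convex hull $\mathcal H$ of this point set; because the structure is decorated and of finite type, $\mathcal H$ is a (locally finite) convex body whose boundary is a union of geodesic faces, each face being the convex hull of finitely many light-cone points lying in a common affine plane. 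Projecting the (two-dimensional) faces of $\partial\mathcal H$ radially back to $\mathbb D$ and then down to $F$ produces an $MC(F)$-invariant decomposition of $F$ into ideal polygons, i.e.\ an ideal cell decomposition $\Delta([\rho])$; this is the \emph{canonical} (Epstein--Penner type) cell decomposition associated to the decorated structure.

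Next I would show that the assignment $[\rho]\mapsto\Delta([\rho])$ has the right combinatorial behavior to define a cell decomposition of $\tilde T(F)$ itself. The key step is a \emph{stability/degeneration} analysis: using the diffeomorphism of Theorem~\ref{thm:coords} one works in the global chart $\mathbb{R}_{>0}^{6g-6+3s}$, and one checks (i) that for a fixed ideal cell decomposition $\Delta$, the locus of decorated structures whose canonical decomposition is exactly $\Delta$ is a nonempty open cell, cut out inside the chart by the finitely many strict convexity inequalities expressing that each would-be diagonal of a complementary polygon bulges \emph{outward} past the hull face (equivalently, a Ptolemy-type inequality on the relevant $\lambda$-lengths); (ii) that when $\Delta$ is a triangulation this locus is all of an open top-dimensional cell, and a single such convexity inequality degenerates to an equality exactly when two adjacent triangles become coplanar, i.e.\ the cell decomposition coarsens by deleting one arc; and (iii) that every ideal cell decomposition arises this way and the closure relations among these cells match inclusion of arc systems. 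Together these say precisely that the cells, indexed by ideal cell decompositions of $F$ and glued according to inclusion, tile $\tilde T(F)$ and that this tiling is carried isomorphically onto the corresponding subcomplex of the geometric realization of the arc complex; $MC(F)$-invariance is automatic since the convex hull construction is natural under $\rho\mapsto g\rho g^{-1}$, $g\in\mathrm{PGL}_2(\mathbb R)$, and hence commutes with the mapping class action on $\Delta$.

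The subcomplex of the arc complex one lands in is exactly the ``fillable'' part: the cells labeled by those arc systems $\Delta$ each of whose complementary regions is simply connected (an ideal polygon), rather than the full arc complex which also contains cells for arc systems leaving an essential subsurface. I would make this identification explicit by verifying that an arc system is realized as a canonical decomposition of some decorated structure if and only if it is an ideal cell decomposition in the sense already defined in the excerpt (each complementary component an ideal polygon) --- one direction is immediate from the construction, and the other is a standard existence argument producing, for any prescribed ideal cell decomposition, a decorated structure whose light-cone points have that face lattice, most cleanly by first taking an ideal triangulation refining $\Delta$ and then moving $\lambda$-lengths to the appropriate boundary stratum.

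The main obstacle I anticipate is step (ii)/(iii): establishing that the convexity inequalities really do cut $\mathbb{R}_{>0}^{6g-6+3s}$ into cells of the asserted combinatorial type and that one obtains a genuine \emph{decomposition} (covering, with cells meeting only along faces) rather than merely a stratification --- in particular controlling that the hull construction is well behaved in the noncompact, finite-type setting (local finiteness of $\partial\mathcal H$, no accumulation of faces near the light cone, properness of the radial projection) and that degenerations are always of the simple ``two triangles becoming coplanar'' type with no worse collapses. This is where the real work of \cite{Pdec, Pbook} lies, and I would invoke their convex hull analysis for the general finite-type surface, specializing the bookkeeping to $F=F_g^s$; for the once-punctured torus $F_1^1$ the arc complex and its fillable subcomplex are small enough that the statement can be checked by hand as a sanity check, which is the case most relevant to the remainder of this paper.
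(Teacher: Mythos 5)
Your proposal is correct and follows essentially the same route the paper sketches: both construct the canonical decomposition via the Epstein--Penner convex hull of the light-cone lift of the decoration, project the hull faces to an ideal cell decomposition of $F$, and identify the cells $\mathcal{C}(\Delta)$ with the fillable subcomplex of the arc complex, deferring the hard ``putative cells are cells'' step to \cite{Pdec,Pbook}. You correctly localize where the real difficulty lies, which is exactly what the paper does.
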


The arena for proving both Theorems \ref{thm:coords} and \ref{thm:icd}, as well as a paradigm for the
super analog of the former, is the Minkowski 3-space
$\mathbb{R}^{2,1}$, that is, $\mathbb{R}^3$ endowed with the standard pairing 
\[
\langle(x,y,z),(x',y',z')\rangle~~=~~-xx'-yy'+zz'
\]
of type $(-,-,+)$.  Equivalently one can use the coordinates $x_1=z-x$, $x_2=z+x$ for which
\[
\langle(x_1,x_2,y),(x_1'x_2',y')\rangle = \tfrac{1}{2}(x_1x_2'+x_2x_1')-yy', 
\]
and $\mathbb{R}^{2,1}\approx\mathbb{R}^3$ is naturally
coordinatized by 
\[
A=A(x,y,z)
=\begin{pmatrix}z-x&y\\ y&z+x\\\end{pmatrix}
=\begin{pmatrix}x_1&y\\ y&x_2\\\end{pmatrix}
\]
with $\SO(2,1)$ furthermore acting naturally on $\mathbb{R}^{2,1}$ by the adjoint
\[
g:A\mapsto g^{\mathrm{t}} Ag,~{\rm for}~g\in \SO(2,1).
\]

\medskip

The upper sheet 
\[
\mathbb{H}=\left\{ u=(x,y,z)\in \mathbb{R}^{2,1}: ~\langle u,u\rangle=1\text{ and }z>0\right\}
\] 
of the hyperboloid 
inherits from $\mathbb{R}^{2,1}$ the structure of the hyperbolic plane where the distance $d$ between
$u,v\in\mathbb{H}$ is given by $\cosh(d)=\langle u,v\rangle$.  The
(open) positive light-cone $L^+$ is
\[
L^+=\left\{ u=(x,y,z)\in \mathbb{R}^{2,1}: ~\langle u,u\rangle=0\text{ and }z>0\right\},
\] 
and affine duality
\[
u\leftrightarrow h(u)=
\left\{ v\in{\mathbb H}:~\langle u,v\rangle~=-2^{-\frac{1}{2}}\right\}
\]
establishes a homeomorphism between $u\in L^+$ and the space of all horocycles $h(u)$ in ${\mathbb H}\approx\mathbb{D}$
in the Hausdorff topology.  This identification
is geometrically natural in the sense that 
\[
\lambda(h(u),h(v))=\sqrt{\langle u,v\rangle}\,.
\]  
In order to prove Theorem \ref{thm:coords}, starting from an arbitrary point $[\tilde{\rho}]\in\tilde{T}(F)$, one recursively employs the $\lambda$-lengths to simultaneously produce

\begin{itemize}
\item
a representation $\pi_1(F)\xrightarrow{\rho}\PSL_2(\mathbb{R})$ and 
\item
a $\pi_1(F)$-equivariant lift 
\[
\ell_\infty:\tilde{\Delta}_\infty \to L^+
\]
from the set of ideal vertices $\tilde{\Delta}_\infty\subset \mathbb{S}^1$ of the pre-image of $\Delta$ in the universal cover
to $L^+$.
\end{itemize}
\medskip

The map $\ell_\infty$ is uniquely determined by the $\lambda$-lengths up to the overall action of $\PSL_2(\mathbb{R})\approx \SO_+(2,1)$,
the identity component of the group $\SO(2,1)$ of Minkowski isometries.  

\medskip

For Theorem \ref{thm:icd}, one takes the closed convex hull of those points in $L^+$ corresponding via affine duality
to the decoration, and the extreme edges of this $\rho(\pi_1(F))$-invariant
convex body project to a corresponding ideal cell decomposition $\Delta([\tilde{\rho}])$ of $F$, for $[\tilde{\rho}]\in \tilde{T}(F)$
covering $[\rho]\in T(F)$. For any fixed ideal cell decomposition $\Delta$, define
\[
\mathcal{C}(\Delta)
=\left\{[\tilde{\rho}]\in \tilde{T}(F):\Delta([\tilde{\rho}])\subsetneq\Delta\right\} .
\]
As $\Delta$ varies over all ideal cell decompositions of $F$, these $\mathcal{C}(\Delta)\subset\tilde{T}(F)$ give a $MC(F)$-invariant decomposition of $\tilde{T}(F)$, and the difficult step is showing that
these putative cells are indeed cells.  In fact, they projectivize to open simplices of dimension $6g-7+3s$ and glue together in the natural way to form an ideal cell decomposition of $\tilde{T}(F)/\mathbb{R}_{>0}$. For $s=1$, this is an ideal cell-decomposition of the Teichm\"uller space $T(F)=\tilde{T}(F)/\mathbb{R}_{>0}$ itself.

\subsection{Flips and the Ptolemy groupoid}

The basic combinatorial operation to produce a new ideal triangulation from an initial triangulation $\Delta$ is called a \emph{flip}. Flips are indexed
by pairs $(e,\Delta)$ consisting of an edge $e$ in the triangulation $\Delta$ which borders two distinct triangles in $F-\Delta$ and are defined as follows: the flip $\Phi_e:=\Phi_{(e,\Delta)}$ removes $e$ from $\Delta$ and replaces it by the other diagonal $f$ of the ideal quadrilateral complementary to $F-(\Delta-\{ e\})$. If $a,b,e$ and $c,d,e$ are the consecutive edges of two such
triangles, then one can compute without difficulty that the {\it Ptolemy relation}
\[
ef=ac+bd
\]
holds among the $\lambda$-lengths.

\medskip

The \emph{Ptolemy groupoid} $Pt(F)$ is the
groupoid whose objects are ideal triangulations of $F$ and whose morphisms
are given by compatible compositions of flips.  According to Theorem~\ref{thm:icd}, this is exactly
the fundamental path groupoid of the 1-skeleton of the dual of the ideal cell decomposition of $\tilde{T}(F)$.
The next result follows immediately from the previous theorem
together with considerations of general position.

\begin{corollary}
For any $F=F_g^s$ of negative Euler characteristic, the Ptolemy groupiod $Pt(F)$ of $F$ is connected: finite compositions of flips act transitively on
ideal triangulations of $F$.  Moreover, a complete set of relations is given by: 

\leftskip=3ex

\medskip

{\rm i)} if $e,f$ are diagonals
of the quadrilateral complementary to $F-(\Delta-\{e\})$, then $\Phi_e\Phi_f=1$; 

\medskip

{\rm ii)} if $e,f\in\Delta$
and their nearby quadrilaterals are disjoint from one another, then $\Phi_e$ and $\Phi_f$ commute; 

\medskip

{\rm iii)} if $e,f\in\Delta$ are edges of a common triangle which are interior to a pentagon complementary to $F-(\Delta-\{e,f\})$,
then the composition of five non-trivial consecutive flips along edges interior to the pentagon yields the identity.  

\medskip

\leftskip=0ex

Moreover, every vertex isotropy group of the Ptolemy groupoid is isomorphic to the mapping class group $MC(F)$.
 \end{corollary}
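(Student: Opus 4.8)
The plan is to deduce this corollary from the ideal cell decomposition of $\tilde{T}(F)$ established in Theorem~\ref{thm:icd}, essentially by reading off the combinatorics of the dual one-skeleton. First I would identify the Ptolemy groupoid $Pt(F)$ with the fundamental path groupoid of the one-skeleton of the dual cell complex: vertices of the dual correspond to top-dimensional cells $\mathcal{C}(\Delta)$, which are indexed by ideal triangulations $\Delta$, while edges of the dual correspond to codimension-one cells $\mathcal{C}(\Delta')$ where $\Delta'$ is an ideal cell decomposition with exactly one square (the complement of removing a single flippable edge from a triangulation), so that such a codimension-one cell lies in the closure of precisely the two cells $\mathcal{C}(\Delta)$ and $\mathcal{C}(\Phi_e\Delta)$ related by a flip. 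Connectivity of $Pt(F)$ is then immediate from connectivity of $\tilde{T}(F)$ — more precisely its projectivization, which by Theorem~\ref{thm:icd} is homeomorphic to a connected manifold carrying this cell structure; since the one-skeleton of a cell decomposition of a connected space is connected, flips act transitively on ideal triangulations. This also gives surjectivity of the obvious map from the path groupoid of the dual one-skeleton onto $Pt(F)$ as defined.

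Next, for the relations I would invoke the standard fact that the fundamental groupoid of a CW complex is generated by its one-skeleton with relations coming from the attaching maps of the two-cells. So I would enumerate the codimension-two cells $\mathcal{C}(\Delta'')$, where $\Delta''$ is an ideal cell decomposition obtained from a triangulation by removing two edges. There are exactly three combinatorial types, according to the isotopy type of $F-\Delta''$ near the removed arcs: (i) the two removed edges are the two diagonals of a single square — but this is really the degenerate case yielding $\Phi_e\Phi_f = 1$ when $e,f$ are the two diagonals of the same quadrilateral, i.e.\ $\Phi_e$ is invertible with inverse the reverse flip; (ii) removing two edges whose adjacent quadrilaterals are disjoint, giving a product region (a $2$-cell which is a square $=$ interval $\times$ interval) and hence the commutation relation $\Phi_e\Phi_f = \Phi_f\Phi_e$; and (iii) removing two edges of a common triangle inside a pentagon, where the link of the codimension-two cell is a pentagon and the boundary of the corresponding $2$-cell is a $5$-cycle of flips, giving the pentagon relation. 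I would check that these three local pictures exhaust the possibilities by a short case analysis of how two disjoint arcs can sit relative to a triangulation, and that each codimension-two cell indeed contributes exactly the claimed relator, reading the cyclic word off the link.

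Finally, for the statement about stabilizers: fix a triangulation $\Delta$ and consider loops in the dual one-skeleton based at the vertex $\mathcal{C}(\Delta)$, i.e.\ compositions of flips returning to $\Delta$ as an isotopy class of triangulation. Such a composition determines a self-homeomorphism of $F$ well-defined up to isotopy — this is the monodromy of following the flip path through the cells of $\tilde{T}(F)$ — and I would argue this assignment is a well-defined isomorphism onto $MC(F)$ using $MC(F)$-invariance of the cell decomposition (Theorem~\ref{thm:icd}): the deck group of the covering of the moduli-space analog, or equivalently the orbifold fundamental group of $\tilde{T}(F)/MC(F)$ based at the cell of $\Delta$, is precisely $MC(F)$ since $MC(F)$ acts on $\tilde{T}(F)$ with this cell decomposition as a fundamental domain complex, and that orbifold fundamental group is computed by the same generators-and-relations presentation restricted to based loops.

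I expect the main obstacle to be the case analysis in the middle step: verifying that (i)–(iii) are the \emph{only} codimension-two strata and that in case (iii) the link really is a pentagon (equivalently that exactly five flips supported inside a pentagon return to the start) requires a genuine, if elementary, combinatorial argument about arcs and triangulations of the disk — this is the place where one must be careful rather than merely invoke general position. The connectivity and the stabilizer statements, by contrast, follow formally once the dictionary between $Pt(F)$ and the dual one-skeleton is in place.
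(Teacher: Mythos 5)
Your proposal matches the paper's own (largely implicit) argument: the paper deduces this corollary ``immediately from the previous theorem together with considerations of general position,'' having just identified $Pt(F)$ with the fundamental path groupoid of the one-skeleton of the dual of the ideal cell decomposition of $\tilde{T}(F)$ from Theorem~\ref{thm:icd}. The connectivity argument, the reading of relations off codimension-two cells, and the identification of the object-stabilizer with $MC(F)$ via the $MC(F)$-invariance of the decomposition are exactly the standard details the paper elides, and you have correctly flagged the one non-formal step (the case analysis showing the codimension-two strata are exhausted by the disjoint-square and pentagon configurations) as the content of ``general position.''
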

 
\subsection{Farey tessellation for $T(F^1_{1})$}
\label{sec:fareytess}
In the special case $s=1$, projectivization of the $\lambda$-length coordinates on decorated Teichm\"uller space give coordinates on the Teichm\"uller space
$T(F)$ itself, and a particularly nice section of the forgetful bundle $\tilde{T}(F)\to T(F)$ is given by setting the sum of all the $h$-lengths
to a constant.\medskip

In the still more special case of $g=s=1$, the Teichm\"{u}ller space $T(F)$ identifies with the
Poincar\'e disk $\mathbb{D}$, and the ideal cell decomposition of Theorem~\ref{thm:icd} specializes to a classical picture from 
algebraic number theory, namely the \emph{Farey tessellation} $\tau_*$ defined as follows (see \cite{Pbook} for example for a discussion of the history and significance of $\tau_*$)
take the ideal triangle $t$ spanned by $\pm 1,\sqrt{-1}\in\partial\mathbb{D}\subset{\mathbb C}$ and consider the
group generated by hyperbolic reflections in its sides.  The orbit of the three edges in the frontier of $t$ under this group of reflections
constitutes the Farey tessellation.  

\medskip

The index two normal subgroup of the reflection group consisting of an even number of reflections is the discrete group $\PSL_2(\mathbb{Z})$, called the \emph{modular group}. In particular, the modular group $\PSL_2(\mathbb{Z})$ acts on the Farey tessellation $\tau_*$, and even acts simply transitively on the collection of oriented edges of $\tau_*$.

\medskip

The modular group plays a second role for $F=F_1^1$ since it is the mapping class group
$MC(F)\approx \PSL_2(\mathbb{Z})$.  The Farey tessellation also plays a second role since
any finite-index subgroup $\Gamma<\PSL_2(\mathbb{Z})$ must also leave the Farey tessellation invariant and descend
to an ideal triangulation of the corresponding cover of $F$.  These are precisely the ``punctured arithmetic
surfaces'' characterized by demanding integral $\lambda$-lengths on any ideal triangulation; more explicitly still,
there is a decoration of $\tau_*$ all of whose $\lambda$-lengths are unity, and one checks that the $\lambda$-length of {\sl any}
pair of corresponding horocycles is integral.  We see in $\S$\ref{sec:dualcomplex} that the Farey tesselation plays yet a third role as the na\"{i}ve dual to the so-called curve complex of $F_1^1$.

\subsection{Overview of the $T(F^1_{1})$ theory}

The surface $F=F^1_1$ enjoys special combinatorial properties:\medskip

{Firstly}, given an ideal arc $a$ in $F$, there is a unique isotopy class of simple closed curve $\gamma_a$ in its complement. Indeed, one can see 
that this is manifestly the case for a specific ideal arc and then use the fact that $MC(F)$ acts transitively on ideal arcs.  The converse is true by essentially the same argument, and yields a natural bijection between ideal arcs and simple closed curves.\medskip

{Secondly}, there is a single ideal triangulation of $F$ up to homeomorphism since $\PSL_2(\mathbb{Z})$ acts transitively on the triangles complementary to $\tau_*$, so $MC(F)=Pt(F)$.\medskip

{Thirdly}, if cutting along $\{a,b\}$ decomposes $F$ into an ideal quadrilateral, then there are exactly two ideal arcs $c,c'$ so that $\{a,b,c\}$ and $\{a,b,c'\}$ form ideal triangulations of $F$, and $\{ c,c'\}=\{T_ab,T_a^{-1}b\}$, where $T_a$ is the (right) Dehn twist along $\gamma_a$ and acts on corresponding $\lambda$-lengths by 
\[
(a,b,c)\mapsto \left(a,\tfrac{a^2+b^2}{c},b\right).
\]

{Fourthly}, fix any ideal triangulation and consider the $MC(F)=Pt(F)$ orbit of the point with $\lambda$-lengths $(1,1,1)$.
The \emph{Markoff equation} 
\begin{align}
a^2+b^2+c^2=3abc
\end{align}
arises directly from the Ptolemy equation by solving the former as a quadratic in $c$,
and its solutions correspond to arithmetic 
once-punctured tori.  The constant 3 in the equation is flexible in the sense that projective re-scaling 
alters this constant in the inhomogeneous equation.  Indeed this scale is tantamount to a choice of
section of $\tilde{T}(F)\to T(F)$ insofar as the horocycle on $F$ corresponding to a triple $(a,b,c)$
has hyperbolic length given by the sum of all $h$-lengths, namely,
\[
2\left(\tfrac{a}{bc}+\tfrac{b}{ac}+\tfrac{c}{ab}\right)=2\tfrac{a^2+b^2+c^2}{abc}.
\]

\medskip

\noindent Summarizing, we have

\medskip

\begin{corollary}\label{cor:torus}
For $F=F_1^1$, the ideal cell decomposition of $T(F)$ in Theorem~\ref{thm:icd}  
 is the Farey tessellation $\tau_*$,
and in this case we have $MC(F)\approx \PSL_2(\mathbb{Z})$ leaving $\tau_*$ invariant.  
For any ideal triangulation, the flip-orbit of the point $(1,1,1)$ in $\lambda$-lengths is given by
the {\rm Markoff triples}, i.e., solutions to the Diophantine equation $a^2+b^2+c^2=3abc$,
and the effect of the Dehn twist along $\gamma_a$ on $\lambda$-length coordinates is given by
$(a,b,c)\mapsto (a,{{a^2+b^2}\over c},b)$.  Moreover, sections of $\tilde{T}(F)
\to T(F)
$ are given in coordinates
by level sets of ${{a^2+b^2+c^2}\over{abc}}$.
\end{corollary}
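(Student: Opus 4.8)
The plan is to recognize that Corollary~\ref{cor:torus} is almost entirely a specialization of the general theory already developed, so the proof is a matter of assembling the pieces that precede it rather than proving anything new. First I would invoke Theorem~\ref{thm:icd} in the case $F = F_1^1$: the $MC(F)$-invariant ideal cell decomposition of $\tilde T(F)$ descends, after projectivizing out the $\mathbb{R}_{>0}$-fiber (legitimate since $s=1$, as noted just before Corollary~\ref{cor:torus}), to an ideal cell decomposition of $T(F) \cong \mathbb{D}$. Then I would identify this decomposition with the Farey tessellation $\tau_*$ via the discussion in $\S$\ref{sec:fareytess}: the single ideal triangulation up to homeomorphism corresponds to the ideal triangle $t$ spanned by $\pm 1, \sqrt{-1}$, and transitivity of $\PSL_2(\mathbb{Z})$ on the triangles complementary to $\tau_*$ matches the transitivity of $MC(F) = Pt(F)$ on ideal triangulations; the identification $MC(F) \approx \PSL_2(\mathbb{Z})$ leaving $\tau_*$ invariant is exactly the ``second role'' of the modular group recorded there.

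Next I would address the Markoff-triple claim. Starting from the base point with $\lambda$-lengths $(1,1,1)$ on a fixed ideal triangulation, I would use the fact (the ``third'' and ``fourth'' combinatorial properties of $F_1^1$) that every flip on an edge of a one-triangle triangulation of $F_1^1$ is, up to the identifications $c = T_a b$ or $c = T_a^{-1} b$, governed by the Ptolemy relation $ef = ac + bd$, which in the one-triangle setting (where the quadrilateral complementary to removing $e$ has its four sides identified in pairs $a,b$) becomes $ee' = a^2 + b^2$, i.e. the Dehn-twist formula $(a,b,c) \mapsto (a, \tfrac{a^2+b^2}{c}, b)$. Solving the Ptolemy/twist relation as a quadratic in the new edge and imposing invariance of the base point under the relevant symmetry yields precisely the Markoff equation $a^2 + b^2 + c^2 = 3abc$; this is the computation already sketched in the text (``the Markoff equation arises directly from the Ptolemy equation by solving the former as a quadratic in $c$''), so here I would just confirm that the $Pt(F)$-orbit of $(1,1,1)$ stays on the Markoff surface (it is Vieta-jumping-invariant) and conversely exhausts all positive integer solutions by the standard descent on $a+b+c$.

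For the last two assertions I would simply cite what is already in place: the Dehn-twist formula $(a,b,c) \mapsto (a, \tfrac{a^2+b^2}{c}, b)$ is the ``third'' property verbatim, and the statement about sections of $\tilde T(F) \to T(F)$ being level sets of $\tfrac{a^2+b^2+c^2}{abc}$ follows from the displayed $h$-length computation
\[
2\left(\tfrac{a}{bc}+\tfrac{b}{ac}+\tfrac{c}{ab}\right)=2\,\tfrac{a^2+b^2+c^2}{abc},
\]
since the perimeter of the single horocycle is the sum of the $h$-lengths along it, and a section is exactly a choice of perimeter, hence a level set of this function.

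The only genuine obstacle, and the step I would spend real care on, is the identification of the abstract cell decomposition furnished by Theorem~\ref{thm:icd} with the concrete, arithmetically-defined Farey tessellation $\tau_*$ — i.e. checking that the convex-hull construction in $L^+ \subset \mathbb{R}^{2,1}$ applied to the decoration with all $\lambda$-lengths equal to unity really produces the $\PSL_2(\mathbb{Z})$-orbit of the frontier edges of $t$, rather than merely some combinatorially equivalent triangulation of $\mathbb{D}$. Everything else is bookkeeping: matching the two transitive group actions, running Vieta jumping, and reading off the $h$-length identity. I expect no surprises, but the write-up should make the dictionary between the Minkowski-space picture and the classical number-theoretic picture explicit, perhaps by exhibiting the three light-cone vectors dual to the horocycles decorating $t$ and verifying their pairwise $\lambda$-lengths are $1$, then propagating by the group action.
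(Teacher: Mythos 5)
Your proposal is correct and matches the paper's approach: the paper itself offers no separate proof, introducing the corollary with the phrase ``Summarizing, we have'' after the ``Overview of the $T(F^1_1)$ theory'' subsection, so the argument is exactly the assembly of the Farey-tessellation discussion from $\S$\ref{sec:fareytess}, the four combinatorial observations about $F_1^1$, the Ptolemy/Dehn-twist formula, and the $h$-length computation that you cite. Your closing caveat about verifying the convex-hull construction literally reproduces $\tau_*$ is fair, but note the paper treats this identification as standard in $\S$\ref{sec:fareytess} without further argument, so your instinct to flag it goes slightly beyond rather than against what the authors write.
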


\section{Decorated Super Teichm\"uller space}
\label{sec:superdec} 
\subsection{Elements of superalgebra and the $\OSp(1|2)$ supergroup}
\label{sec:grassmann}
We will be working in a finite dimensional Grassmann algebra $\mathbb{R}_{S[N]}$ with generators $1,\beta_{[1]},\ldots,\beta_{[N]}$ (see, for example, \cite[Chapter~3]{Rbook}). 
It is also useful to introduce notation 
$\beta_{[\lambda]}=\beta_{[\lambda_1]}\dots \beta_{[\lambda_k]}$, where $\lambda=\lambda_1\dots\lambda_k$ is a multi-index, so that $\lambda_1<\lambda_2<\dots<\lambda_k$.
Thus, any $x\in \mathbb{R}_{S[N]}$ can be expanded as 
\[
x=\sum_{\lambda}x_{\lambda}\beta_{[\lambda]},
\]
 where $x_{\lambda}\in \mathbb{R}$.\medskip

Up to a change of basis, we assume without loss of generality that $W=\beta_{[\lambda]}$ (this is non-essential, but simplifies our computations slightly in the proof of Lemma~\ref{thm:technicalcompare}). We define the \emph{body map} (also known as the \emph{augmentation map}) to be the projection $\epsilon:\mathbb{R}_{S[N]}\to\mathbb{R}$ which maps to the real coefficient of $1$, and the \emph{soul map}
\[
s:\mathbb{R}_{S[N]}\to\mathbb{R}_{S[N]},\; x\mapsto x-\epsilon(x)\cdot 1.
\]
The body and the soul of an element of a Grassmann algebra are canonically defined, and independent of the choice of basis for $\mathbb{R}_{S[N]}$. In contrast, we introduce a degree-based refinement of the soul map which is basis dependent, but is nevertheless useful for our analytical purposes.\medskip

Define the \emph{degree $k$ soul map} to be the projection of $s_k:\mathbb{R}_{S[N]}\to\mathbb{R}_{S[N]}$ onto the subset consisting of degree $k$ terms, for instance,
\[
s_2(\sqrt{7}+3\beta_{[1]}+5\beta_{[1,3]}-4\beta_{[3,7]}-19\beta_{[2,3,4,5]})
=5\beta_{[1,3]}-4\beta_{[3,7]}.
\]
Furthermore, we set $s_0(x)=\epsilon(x)\cdot 1$ by convention. The norm on this space is defined as follows. Given $x\in \mathbb{R}_{S[N]}$, 
we can write it as 
\[
||x||=\sum_{\lambda}|x_{\lambda}|.
\]
In addition to the standard norm properties the above definition gives $\mathbb{R}_{S[N]}$ the structure of the Banach algebra, namely, $||XY||\le ||X||\cdot||Y||$.\medskip

The even and odd splitting of the Gassmann algebra is another canonically defined algebraic splitting. This time, we partition $\mathbb{R}_{S[N]}$ into $\Lambda_0\oplus\Lambda_1$, where
$\Lambda_0$ is the subalgebra generated by elements of even degree and $\Lambda_1$ is the subalgebra generated by elements of odd degree, for instance,
\begin{align}
\sqrt{7}+3\beta_{[1]}+5\beta_{[1,3]}-4\beta_{[3,7]}-19\beta_{[2,4,5]}
=&\left(\sqrt{7}+5\beta_{[1,3]}-4\beta_{[3,7]}\right)\nonumber\\
+&\left(3\beta_{[1]}-19\beta_{[2,4,5]}\right)\nonumber.
\end{align}
Whereas odd terms anti-commute with one another, even terms commute with everything in the algebra. Moreover, it is easy to see that the even-even and odd-odd products yield even elements whereas odd-even and even-odd products yields odd terms.

\begin{remark}
We will refer to elements of $\Lambda_0$ as even (or bosonic) and to elements of  $\Lambda_1$ as odd (or fermionic). Another notation convention we adopt is the following: we write that 
\[
u\in\Lambda_0>0\text{ if }\epsilon(u)>0\text{, and }u\in\Lambda_0<0\text{ if }\epsilon(u)<0.
\]
\end{remark}

For the $\mathcal{N}=1$ super Teichm\"uller space, where $\mathcal N$ is half the number of real fermions, the role of the Lie group $\PSL_2(\mathbb{R})$ is supplanted by the
\emph{orthosymplectic group} $G=\OSp(1|2)$, a Lie supergroup which we briefly recall. 
An element $g\in G=\OSp(1|2)$ is described by a
matrix
\[
g=
\begin{pmatrix}a&b&\alpha\\c&d&\beta\\ \gamma&\delta &f\\\end{pmatrix}
\quad\text{ satisfying }\quad
g^\mathrm{st}=Jg^{-1}J^{-1},
\]
where $a,b,c,d,f$ are even and $\alpha,\beta,\gamma,\delta$ are odd variables with 
\[
J=
\begin{pmatrix}
\hskip1.6ex0&1&0\\
 -1&0&0\\ 
 \hskip1.6ex 0&0&1
 \end{pmatrix}
\quad\text{and the \emph{supertranspose}}\quad
g^\mathrm{st}=
\begin{pmatrix}
\hskip1.6exa&\hskip1.6exc&\gamma\\
\hskip1.6exb&\hskip1.6exd&\delta\\
 -\alpha&-\beta &f
\end{pmatrix},
\]
so that the \emph{Berezinian} or \emph{superdeterminant} 
\[
\operatorname{sdet}~g~=~f^{-1}
~\det\left[
\begin{pmatrix}
a&b\\ c&d
\end{pmatrix}
+f^{-1}
\begin{pmatrix}
\alpha\gamma&\alpha\delta\\ \beta\gamma&\beta\delta
\end{pmatrix}
\right]
\]
takes the value unity\footnote{In this formalism, the product in $\OSp(1|2)$ is given by
$\biggl ( \begin{smallmatrix}
a_1&b_1&\alpha_1\\c_1&d_1&\beta_1\\\gamma_1&\delta_1&f_1\\
\end{smallmatrix}\biggr )
\biggl ( \begin{smallmatrix}
a_2&b_2&\alpha_2\\c_2&d_2&\beta_2\\\gamma_2&\delta_2&f_2\\
\end{smallmatrix}\biggr )=
\biggl (\begin{smallmatrix}
a_1a_2+b_1c_2-\alpha_1\gamma_2&a_1b_2+b_1d_2-\alpha_1\delta_2&a_1\alpha_2+b_1\beta_2+\alpha_1 f_2\\
c_1a_2+d_1c_2-\beta_1\gamma_2&c_1b_2+d_1d_2-\beta_1\delta_2&c_1\alpha_2+d_1\beta_2+\beta_1 f_2\\
\gamma_1 a_2+\delta_1 c_2+f_1\gamma_2&\gamma_1 b_2+\delta_1 d_2+f_1 \delta_2&-\gamma_1\alpha_2-\delta_1\beta_2+f_1f_2\\
\end{smallmatrix}\biggr ).
$}.  The assignment of the body of the block $\left(\begin{smallmatrix}a&b\\ c&d\end{smallmatrix}\right)$ of $g$
gives a canonical restriction $\OSp(1|2)\to \SL_2(\mathbb{R})$, so one might think of 
$\OSp(1|2)$ as a 
Grassmann extension of $\SL_2(\mathbb{R})$, rather than $\PSL_2(\mathbb{R})$,
with two odd degrees of freedom, where the apparent four degrees $\alpha,\beta,\gamma,\delta$ are halved by the defining equation. Interested readers may see
\cite{PZ}, for instance, for greater detail.

\subsection{Super Teichm\"{u}ller space}

For $F=F_g^s$ with negative Euler characteristic, the ($\mathcal{N}=1$) \emph{ super Teichm\"uller space} is 
\[
ST (F)=~\Hom'(\pi_1(F),G)/G,\quad\text{ for }G=\OSp(1|2),
\]
where
\begin{itemize}
\item
the restriction $\OSp(1|2) \to \SL_2(\mathbb{R})$ followed by the canonical projection
$\SL_2(\mathbb{R})\to \PSL_2(\mathbb{R})$ is required to be Fuchsian; 
\item
the body of the monodromy about each puncture is parabolic, i.e., has trace~$=\pm2$.
\end{itemize}
In particular, the $\SL_2(\mathbb{R})$-restricted boundary trace is given by either $+2$ or $-2$ and respectively correspond to \emph{Ramond} and \emph{Neveu-Schwarz} punctures. We let $n_R$ and $n_{NS}$ respectively denote the numbers of each type of puncture. It is well-known that $n_R$ is always even, cf.\ \cite{Witten}.

\subsection{Spin structures}
The set of spin structures on $F$ is an affine $H^1(F;\mathbb{Z}/2)$-space and index the connected components of the super Teichm\"uller space $ST(F)$. Given an ideal triangulation $\Delta$ of $F$, a spin structure is naturally described by an equivalence class of orientations on the edges of $\Delta$, where two such orientations are equivalent if they are related by a finite sequence of the following \emph{reversals}: choose a triangle complementary to $\Delta$ and change the orientation of each of its frontier edges.\medskip

The action of the Ptolemy groupoid affects the spin structure, and the effect of a flip is
illustrated in Figure~\ref{fig:spinflip}\footnote{We emphasize that this is how flips act on spin structures in the generic setting whereby the edges $a,b,c,d$ are \emph{all distinct edges}. In the case of the once-punctured torus, which is of the focus of this paper, the opposite edges $a,c$ and $b,d$ respectively agree, and the adjusted flip transformation is given by Figure~\ref{fig:spinfliptorus}.}.

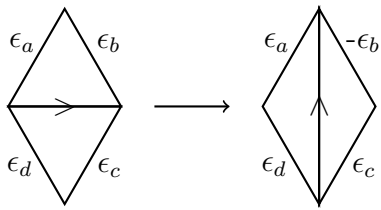
\begin{figure}[h!]

\centering

\begin{tikzpicture}[scale=0.5, baseline,thick]

\draw (0,0)--(3,0)--(60:3)--cycle;

\draw (0,0)--(3,0)--(-60:3)--cycle;

\draw node[above] at (72:1.2){$\epsilon_a$};

\draw node[above] at (23:2.9){$\epsilon_b$};

\draw node[below] at (-22:2.9){$\epsilon_c$};

\draw node[below=-0.1] at (-74:1.1){$\epsilon_d$};

\draw node[above] at (1.5,-0.55){$>$};

\draw node[left] at (0,0) {};

\draw node[above] at (60:3) {};

\draw node[right] at (3,0) {};

\draw node[below] at (-60:3) {};

\end{tikzpicture}
\begin{tikzpicture}[baseline]

\draw[->, thick](0,0)--(1,0);

\node[above]  at (0.5,0) {};

\end{tikzpicture}
\begin{tikzpicture}[scale=0.5, baseline, thick]

\draw (0,0)--(60:3)--(-60:3)--cycle;

\draw (3,0)--(60:3)--(-60:3)--cycle;

\draw node[above] at (72:1.2){$\epsilon_a$};

\draw node[above] at (23:2.9){$\textrm{-}\epsilon_b$};

\draw node[below] at (-22:2.9){$\epsilon_c$};

\draw node[below=-0.1] at (-74:1.1){$\epsilon_d$};

\draw node[left] at (2.0941,0){${\mathbf{\wedge}}$};

\draw node[left] at (0,0) {};

\draw node[above] at (60:3) {};

\draw node[right] at (3,0) {};

\draw node[below] at (-60:3) {};

\end{tikzpicture}
\caption{Flip effect on spin structures}
\label{fig:spinflip}
\end{figure}
The five edges depicted here are taken to be pairwise distinct, and each $\epsilon_x$ denotes an orientation on the ideal arc $x$ with $-\epsilon_x$ denoting orientation reversal of $\epsilon_x$.\medskip

According to \cite{sergey}, a spin structure on a Riemann surface described by a Fuchsian representation $\pi_1(F)\xrightarrow{  \rho  } \PSL_2(\mathbb{R})$  is given by a lift of $\rho$ to $\SL_2(\mathbb{R})$,
consistent with the restriction of a representation as above into $\OSp(1|2)$ inducing a representation into $\SL_2(\mathbb{R})$.
The spin structure thus determines the type R or NS of a puncture.  A complete discussion of spin structures on punctured surfaces and the proof of the combinatorial description above are given in \cite{PZ}.

\subsection{Super Minkowski space}

Here we review facts about super Minkowski space, referring the reader to \cite{PZ} for a further detailed discussion.\medskip

Just as our discussion of the bosonic case relied on Minkowski space $\mathbb{R}^{2,1}$, the current ${\mathcal N}=1$
case employs super Minkowski space $\mathbb{R}^{2,1|2}$, understood as the superspace over some Grassmann algebra $\Lambda$ consisting of vectors $(x_1,x_2,y|\phi,\theta)$, where $x_1,x_2, y\in \Lambda_0$ and $\phi, \theta\in \Lambda_1$, so that  
the pairing between two vectors
$u=(x_1,x_2,y|\phi,\theta)$ and $u'=(x_1',x_2',y'|\phi',\theta')$ is given by
\[
\langle u,u'\rangle
~=~\tfrac{1}{2}(x_1x_2'+x_1'x_2)-yy'+\phi\theta'+\phi'\theta,
\]
and we define the \emph{(super) $\lambda$-length} to be the positive square root (i.e., the square root with positive body) of this inner product for two points in
the \emph{positive light cone} \footnote{The inequality notation $a>0$ or 
$a< 0$  for a given supernumber $a$ mean that such inequalities hold for its body $\epsilon(a)$.}
\[
{\mathcal L}^+
=\left\{
u=(x_1,x_2,y|\phi,\theta)\in{\mathcal L}\subset{\mathbb{R}^{2,1|2}}
:~\langle u,u\rangle~=0~{\rm and}~x_1+x_2>0
\right\},
\]
where ${\mathcal L}$ is the collection of isotropic vectors.  Each $\OSp(1|2)$ orbit in
$\mathcal{L}^+$ contains elements $(1,0,0|0,\theta)$ and $(1,0,0|0,-\theta)$, for some
fermion $\theta$, and $\pm\theta$
uniquely determines the orbit.  We distinguish the subset 
\[
{\mathcal L}_0=\OSp(1|2)\cdot(1,0,0|0,0)\subset\mathcal{L}^+
\]
called the {\it special light cone} which is the super analogue of the bosonic open
positive light cone.  To explain this, one can compute that these are exactly the
vectors in ${\mathcal L}^+$ whose stabilizer in $\OSp(1|2)$ is conjugate in $\OSp(1|2)$
to a unipotent element in analogy to the bosonic situation.  Another more Lie-theoretic
explanation is that ${\mathcal L}_0$ is precisely the intersection between ${\mathcal L}^+$
and the $\OSp(1|2)$ orbit of the highest weight vector.  Just as in the bosonic case, a super horocycle
is the affine dual in super Minkowski space of a point of ${\mathcal L}_0$.\medskip

Any vector in ${\mathbb{R}^{2,1|2}}$ can be written as
\[
A=\begin{pmatrix}x_1&y&\phi\\ y&x_2&\theta\\ -\phi&-\theta&0\\\end{pmatrix}.
\]
Just as in the bosonic case, $\OSp(1|2)$ acts
naturally on ${\mathbb{R}^{2,1|2}}$, ${\mathcal L}$ and ${\mathcal L}_0$ by the adjoint
\[
g:A\to g^{\rm st} A g, ~{\rm for}~g\in \OSp(1|2).
\]

\subsection{Decorated super Teichm\"uller space}

Again just as in the bosonic case, the \emph{decorated super Teichm\"uller space} $S\tilde{T}(F)$ is the space of $\OSp(1|2)$-orbits
of lifts 
\[
\ell_\infty:\tilde\Delta_\infty\to{\mathcal L}_0
\]
which are $\pi_1(F)$-equivariant for some representation $\pi_1(F)\xrightarrow{ \hat\rho}\OSp(1|2)$, such that $[\hat{\rho}]\in ST(F)$.

\begin{theorem}[{\cite{IPZ2,PZ}}]
\label{thm:super} 
For $F=F_g^s$ of negative Euler characteristic, the components of $S\tilde{T}(F)$
are indexed by the spin structures on $F$, and on each component $C$, there are global affine coordinates which assign to an ideal triangulation $\Delta$ of $F$ one even coordinate called a \emph{$\lambda$-length} to each edge of $\Delta$ and one odd coordinate called a \emph{$\mu$-invariant} to each triangle complementary in $F-\Delta$. Moreover, the collection of $\mu$-invariants are taken modulo an overall change of sign, giving a real-analytic homeomorphism from $S\tilde{T}(F)$ onto
\[C\to \mathbb{R}_{>0}^{6g-6+3s|4g-4+2s}/\mathbb{Z}_2.\footnote{The $>0$ notation here means that bodies of all $6g-6+3s$ even coordinates are $>0$ according to the conventions of Section~\ref{sec:grassmann}.  }\]

\medskip

Flips between ideal triangulations act real analytically on their respective $\lambda$-length and $\mu$-invariant coordinates as follows:
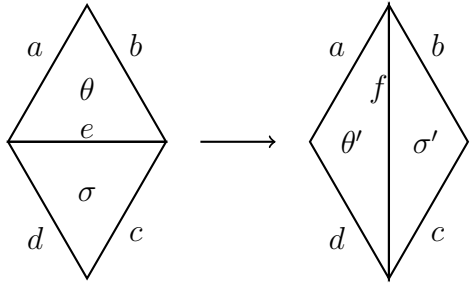
\begin{figure}[h!]

\centering

\begin{tikzpicture}[scale=0.7, baseline, thick]

\draw (0,0)--(3,0)--(60:3)--cycle;

\draw (0,0)--(3,0)--(-60:3)--cycle;

\draw node[above] at (70:1.5){$a$};

\draw node[above] at (30:2.8){$b$};

\draw node[below] at (-30:2.8){$c$};

\draw node[below=-0.1] at (-70:1.5){$d$};

\draw node[above] at (1.5,-0.15){$e$};

\draw node[left] at (0,0) {};

\draw node[above] at (60:3) {};

\draw node[right] at (3,0) {};

\draw node[below] at (-60:3) {};

\draw node at (1.5,1){$\theta$};

\draw node at (1.5,-1){$\sigma$};

\end{tikzpicture}
\begin{tikzpicture}[baseline]

\draw[->, thick](0,0)--(1,0);

\node[above]  at (0.5,0) {};

\end{tikzpicture}
\begin{tikzpicture}[scale=0.7, baseline, thick]

\draw (0,0)--(60:3)--(-60:3)--cycle;

\draw (3,0)--(60:3)--(-60:3)--cycle;

\draw node[above] at (70:1.5){$a$};

\draw node[above] at (30:2.8){$b$};

\draw node[below] at (-30:2.8){$c$};

\draw node[below=-0.1] at (-70:1.5){$d$};

\draw node[left] at (1.7,1){$f$};

\draw node[left] at (0,0) {};

\draw node[above] at (60:3) {};

\draw node[right] at (3,0) {};

\draw node[below] at (-60:3) {};

\draw node at (0.8,0){$\theta'$};

\draw node at (2.2,0){$\sigma'$};

\end{tikzpicture}
\caption{Ptolemy transformation}
\label{fig:ptolemy}
\end{figure}

\begin{align}
ef&=(ac+bd)\biggl( 1+{{\sigma\theta\sqrt{\chi}}\over{1+\chi}}\biggr)\\
\sigma'&={{\sigma-\sqrt{\chi}\theta}\over{\sqrt{1+\chi}}}
\quad\text{ and }\quad
\theta'={{\theta+\sqrt{\chi}\sigma}\over{\sqrt{1+\chi}}}\label{eq:muinvars}
\end{align}
in the notation of Figures~\ref{fig:spinflip} and \ref{fig:ptolemy}, 
where Roman letters are $\lambda$-lengths and Greek are $\mu$-invariants,
subscripted epsilons in Figure ~\ref{fig:spinflip} denote orientations with negation 
signifying reversing the orientation on the interior edge
as indicated and $\chi={{ac}\over{bd}}$ is the {\rm super cross ratio}.
\end{theorem}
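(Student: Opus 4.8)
The plan is to run the super analogue of Penner's proof of Theorem~\ref{thm:coords}, with $\mathbb{R}^{2,1|2}$ replacing $\mathbb{R}^{2,1}$ and the special light cone ${\mathcal L}_0$ replacing $L^+$; the one extra ingredient, relative to the bosonic argument, is a normal form for a decorated ideal super triangle. First I would note that ${\mathcal L}_0$, being a single $\OSp(1|2)$-orbit (that of a highest weight vector), has dimension $2|1$: the stabilizer of such a vector is a $1|1$-dimensional subgroup of the $3|2$-dimensional group $\OSp(1|2)$. Hence an ordered triple of points of ${\mathcal L}_0$ carries $6|3$ coordinates, the diagonal $\OSp(1|2)$-action on generic such triples is free, and the quotient has dimension exactly $3|1$. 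I would make these $3|1$ invariants explicit: use $\OSp(1|2)$ to send the first vertex to $(1,0,0|0,0)$ and the second to $(0,t,0|0,0)$ for an even $t>0$, after which the residual stabilizer is trivial and the third vertex retains a single odd coordinate that cannot be removed. The three even invariants are then the $\lambda$-lengths $\lambda_{ij}=\sqrt{\langle u_i,u_j\rangle}$ and the surviving odd one is the $\mu$-invariant of the triangle, so a decorated super triangle is a point of $\mathbb{R}_{>0}^{3|1}$.

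With the local model in hand I would produce the global coordinates by a developing-map argument. Given an ideal triangulation $\Delta$ of $F$ together with the combinatorial spin-structure data of an orientation on the edges of $\Delta$ modulo the triangle reversals, I normalize a lift of one triangle of the universal cover and then develop across edges one triangle at a time: the lift of each newly exposed ideal vertex is pinned down by the $\lambda$-lengths of the two newly exposed edges, the $\mu$-invariant of the newly added triangle, and the edge-orientations (which select the branches of the square roots in the transition maps). The holonomy of this process around any loop in $\pi_1(F)$ is a well-defined element of $\OSp(1|2)$, because its body is exactly Penner's bosonic developing representation — which closes up and is type-preserving Fuchsian, so has parabolic body about each puncture (the $\pm2$ boundary trace being the Ramond/Neveu--Schwarz dichotomy) — while the soul corrections propagate consistently since every transition map lies in $\OSp(1|2)$. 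This yields $[\hat\rho]\in ST(F)$ with an equivariant $\ell_\infty:\widetilde\Delta_\infty\to{\mathcal L}_0$; conversely any such $\ell_\infty$ recovers the $\lambda$-lengths and $\mu$-invariants by restriction to each triangle, and the two constructions are mutually inverse. Since the formulas are rational in the $\lambda$'s and polynomial in the $\mu$'s, the resulting bijection is a real-analytic homeomorphism onto $\mathbb{R}_{>0}^{6g-6+3s|4g-4+2s}$, up to the two global ambiguities discussed next.

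The first ambiguity is the choice of square-root branches in the developing construction: these form a $\mathbb{Z}/2$-cochain on $\Delta$, and two choices give $\OSp(1|2)$-equivariantly equivalent lifts precisely when they differ by a coboundary, which is the triangle-reversal equivalence. By \cite{sergey} and \cite{PZ}, equivalence classes of such edge-orientations form an affine $H^1(F;\mathbb{Z}/2)$-space canonically identified with the spin structures on $F$, whence the components of $S\tilde T(F)$ are indexed by spin structures, each carrying the above coordinates. The second ambiguity is the overall sign of the collection of $\mu$-invariants: the central element $\epsilon=\operatorname{diag}(1,1,-1)\in\OSp(1|2)$ lies in the kernel of $\OSp(1|2)\to\SL_2(\mathbb{R})$ and acts on $\mathbb{R}^{2,1|2}$ by negating every fermionic coordinate while fixing the even ones, so it negates all $\mu$-invariants but changes neither the $\lambda$-lengths nor the point of $ST(F)$; the honest invariant is therefore the $\mu$-data modulo this simultaneous sign change, which is the quotient by $\mathbb{Z}_2$ in the statement.

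Finally, the flip rules are a direct computation in $\mathbb{R}^{2,1|2}$: place the four vertices of an ideal quadrilateral in ${\mathcal L}_0$ in the above normal form, read off $(a,b,c,d,e;\theta,\sigma)$ for one diagonal triangulation and $(a,b,c,d,f;\theta',\sigma')$ for the other, express the latter in terms of the former, and observe that the super cross ratio $\chi=ac/bd$ organizes the answer into $ef=(ac+bd)\bigl(1+\sigma\theta\sqrt{\chi}/(1+\chi)\bigr)$ together with the rotation formulas~\eqref{eq:muinvars}. I expect this last step to be the principal obstacle — not because any single identity is deep, but because of the bookkeeping: one must carry the odd variables in a fixed order compatible with the orientation conventions of Figure~\ref{fig:spinflip}, expand the square roots (whose arguments have positive even body) to first order in the soul, and then verify internal consistency, namely that the transformed data again lies in ${\mathcal L}_0$, that a flip followed by its inverse is the identity, and that the pentagon relation holds. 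These consistency checks are what actually certify the formulas and simultaneously confirm that flips act by well-defined real-analytic maps on the coordinates, as asserted.
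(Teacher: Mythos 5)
First, a point of comparison: this paper does not prove Theorem~\ref{thm:super} at all --- it is quoted from \cite{IPZ2,PZ} --- so the only thing to measure your outline against is the framework reviewed in $\S$\ref{sec:superdec}, and your route (super Minkowski space $\mathbb{R}^{2,1|2}$, lifts of $\tilde\Delta_\infty$ to the special light cone ${\mathcal L}_0$, normalization of a decorated triangle, development/holonomy, spin structures as edge orientations modulo triangle reversals) is indeed the same strategy as the cited construction. As a reconstruction it is plausible, but two points are genuinely wrong or glossed. (1) Your $\mathbb{Z}_2$: the element $\operatorname{diag}(1,1,-1)$ does not lie in $\OSp(1|2)$ under the paper's convention, since its Berezinian is $-1$ (the odd--odd entry $f$ must have body $+1$); the element that actually implements the overall sign change of the $\mu$-invariants is $J^2=\operatorname{diag}(-1,-1,1)\in\OSp(1|2)$, whose $\SL_2(\mathbb{R})$-body is $-I$ (so it is \emph{not} in the kernel of the restriction to $\SL_2(\mathbb{R})$, only of the further projection to $\PSL_2(\mathbb{R})$) and which acts on $\mathbb{R}^{2,1|2}$ by negating the odd coordinates. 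Correspondingly, after normalizing two vertices the residual stabilizer is $\{1,J^2\}$, not trivial as you assert; this non-trivial stabilizer is precisely the source of the per-triangle sign ambiguity of $\mu$ (cf.\ the paper's remark that $\pm\theta$ determine the same orbit in ${\mathcal L}_0$) and hence of the global $\mathbb{Z}_2$ quotient, so your ``residual stabilizer is trivial'' contradicts the very mechanism you invoke later.

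(2) The substantive content of the theorem --- the explicit super Ptolemy formulas and their compatibility with the spin data --- is deferred to ``a direct computation,'' and the consistency checks you propose are subtler than stated. In particular the super flip is not an involution: applying \eqref{eq:muinvars} twice (with the cross ratio replaced by $\chi^{-1}$) sends $(\sigma,\theta)\mapsto(-\theta,\sigma)$, so ``a flip followed by its inverse is the identity'' and the pentagon relation only close once the orientation/reversal bookkeeping of Figure~\ref{fig:spinflip} is incorporated into the definition of the flip, which is exactly the delicate part carried out in \cite{PZ}. Without that computation and the corrected treatment of the stabilizer, what you have is a faithful outline of the argument of \cite{IPZ2,PZ} rather than a proof.
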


\medskip

 On each component of $S\tilde{T}(F)$ are determined the type --- either Ramond or Neveu-Schwarz, of each puncture.  We let $n_{\rm R}$ denote the number of the former and $n_{\rm NS}$ of the latter, so in particular $s=n_{\rm NS}+n_{\rm R}$.  According to \cite{IPZ2}, each Ramond puncture imposes one real odd constraint, and by Theorem \ref{thm:super} the real odd dimension of $S\tilde{T}(F_g^s)$ is
$$4g-4+2s=4g-4+2(n_{\rm NS}+n_{\rm R})=2(2g-2+n_{\rm NS}+n_{\rm R}/2)+n_{\rm R}.$$
Insofar as \cite{Witten} argues from deformation theory that the complex odd dimension of super moduli space is  $2g-2+n_{\rm NS}+n_{\rm R}/2$, we see that the decoration adds to the even variables as usual and to the odd variables one extra real degree of freedom for each Ramond puncture (see \S1 and \S5 of \cite{IPZ2} for more details).\medskip

 The treatment for ${\mathcal N}=2$ in \cite{IPZ2} is entirely analogous (though without the
distinction between Ramond and Neveu-Schwarz punctures) but  is much more involved.

\medskip

Notice in particular that flipping along edge $e$ in Figure~\ref{fig:ptolemy} leaves $\sigma'\theta'=\sigma\theta$ invariant.  Moreover, when $s=1$, the unique puncture is Neveu-Schwarz since Ramond punctures always come in pairs.

\subsection{Once-punctured super tori}

In \cite[Appendix~ B]{IPZ1}, the surface $F=F_1^1$ was considered in detail, and we recall

\begin{proposition}\label{cor:storus} 
For $F=F_1^1$ the unique puncture is NS, and the super decorated Teichm\"uller space $S\tilde{T}(F)$ has four components.  For each such component and each ideal triangulation $\Delta$ of $F$, the component is coordinatized
by three $\lambda$-lengths, one for each of the three edges of $\Delta$, and two $\mu$-invariants, one for each triangle complementary to $\Delta$ in $F$.  The flip on edge $e$ in $\Delta$ acts on $\lambda$-length coordinates $a=c,b=d$ and $e$ as per the notation of Figure~\ref{fig:ptolemy}
by 
$$ef=a^2+b^2+ab\sigma\theta$$ 
and on $\mu$-invariants by
$$\theta'={{b\theta+a\sigma}\over{\sqrt{a^2+b^2}}},~~
\sigma'={{b\sigma-a\theta}\over{\sqrt{a^2+b^2}}}.$$
while the spin structure transformation\footnote{This corrects the last line of Corollary~ B.2 in  \cite[Appendix~B]{IPZ1}.} is described in Figure~\ref{fig:spinfliptorus} (cf. Figure~\ref{fig:spinflip})
\begin{figure}[h!]

\centering

\begin{tikzpicture}[scale=0.5, baseline,thick]

\draw (0,0)--(3,0)--(60:3)--cycle;

\draw (0,0)--(3,0)--(-60:3)--cycle;

\draw node[above] at (72:1.2){$\varepsilon_a$};

\draw node[above] at (23:2.9){$\varepsilon_b$};

\draw node[below] at (-22:2.9){$\varepsilon_a$};

\draw node[below=-0.1] at (-74:1.1){$\varepsilon_b$};

\draw node[above] at (1.5,-0.55){$>$};

\draw node[left] at (0,0) {};

\draw node[above] at (60:3) {};

\draw node[right] at (3,0) {};

\draw node[below] at (-60:3) {};

\draw node at (1.5,1){$\theta$};

\draw node at (1.5,-1){$\sigma$};

\end{tikzpicture}
\begin{tikzpicture}[baseline]

\draw[->, thick](0,0)--(1,0);

\node[above]  at (0.5,0) {};

\end{tikzpicture}
\begin{tikzpicture}[scale=0.5, baseline, thick]

\draw (0,0)--(60:3)--(-60:3)--cycle;

\draw (3,0)--(60:3)--(-60:3)--cycle;

\draw node[above] at (72:1.2){$\varepsilon_a$};

\draw node[above] at (23:2.9){$-\varepsilon_b$};

\draw node[below] at (-22:2.9){$\varepsilon_a$};

\draw node[below=-0.1] at (-74:1.1){$-\varepsilon_b$};

\draw node[left] at (2.0941,0){${\mathbf{\wedge}}$};

\draw node[left] at (0,0) {};

\draw node[above] at (60:3) {};

\draw node[right] at (3,0) {};

\draw node[below] at (-60:3) {};

\draw node at (0.8,0){$\theta'$};

\draw node at (2.2,0){$\sigma'$};

\end{tikzpicture}
\caption{Flip effect on spin structures for $F^1_1$.}
\label{fig:spinfliptorus}
\end{figure}
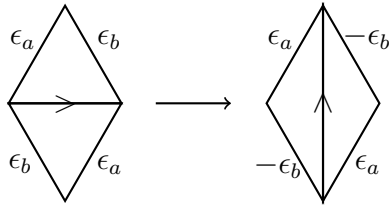
\end{proposition}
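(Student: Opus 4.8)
The plan is to derive everything from the general $\OSp(1|2)$ Ptolemy transformation in Theorem~\ref{thm:super} by specializing to the combinatorics of $F=F_1^1$, where flipping an edge of an ideal triangulation produces a quadrilateral whose two pairs of opposite sides are \emph{identified} in the surface. Concretely, I would begin by recalling from Corollary~\ref{cor:torus} and \S\ref{sec:fareytess} that $F_1^1$ has a single ideal triangulation up to homeomorphism, consisting of three edges $a,b,e$ glued so that the two complementary triangles are the triangle with sides $(a,b,e)$ read in one cyclic order and the triangle with sides $(a,b,e)$ read in the other; hence in the notation of Figure~\ref{fig:ptolemy} the edge identifications force $c=a$ and $d=b$ (the "$a$", "$b$" labels appear twice on the quadrilateral, top-left$=$bottom-left and top-right$=$bottom-right). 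That the unique puncture is Neveu--Schwarz is immediate from the final remark before this subsection: Ramond punctures come in pairs and $s=1$ is odd, so $n_{\rm R}=0$ and $n_{\rm NS}=1$. Likewise the dimension count: three $\lambda$-lengths ($6g-6+3s = 3$) and two $\mu$-invariants ($4g-4+2s=2$), one per complementary triangle, with the overall $\mathbb{Z}_2$ sign on the pair of $\mu$'s yielding exactly four components as in Proposition~\ref{cor:storus}.

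Next I would substitute $c=a$, $d=b$ into the flip formulas of Theorem~\ref{thm:super}. The super cross ratio becomes $\chi = \frac{ac}{bd} = \frac{a^2}{b^2}$, so $\sqrt{\chi} = a/b$, $1+\chi = \frac{a^2+b^2}{b^2}$, and $\frac{\sqrt{\chi}}{1+\chi} = \frac{ab}{a^2+b^2}$. The Ptolemy relation $ef = (ac+bd)\bigl(1 + \frac{\sigma\theta\sqrt{\chi}}{1+\chi}\bigr)$ then reads
\[
ef = (a^2+b^2)\Bigl(1 + \frac{ab\,\sigma\theta}{a^2+b^2}\Bigr) = a^2+b^2+ab\,\sigma\theta,
\]
which is the asserted twist formula. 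For the $\mu$-invariants, $\sqrt{1+\chi} = \sqrt{a^2+b^2}/b$ (choosing the positive-body square root as stipulated), so
\[
\theta' = \frac{\theta + \sqrt{\chi}\,\sigma}{\sqrt{1+\chi}} = \frac{\theta + (a/b)\sigma}{\sqrt{a^2+b^2}/b} = \frac{b\theta + a\sigma}{\sqrt{a^2+b^2}},
\qquad
\sigma' = \frac{\sigma - \sqrt{\chi}\,\theta}{\sqrt{1+\chi}} = \frac{b\sigma - a\theta}{\sqrt{a^2+b^2}},
\]
matching the statement exactly. One sanity check worth recording: $\sigma'\theta' = \sigma\theta$, consistent with the invariance noted after Theorem~\ref{thm:super}, and the body of this flip recovers the classical Dehn-twist action $(a,b,e)\mapsto(a,b,\frac{a^2+b^2}{e})$ from Corollary~\ref{cor:torus}.

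The spin-structure transformation in Figure~\ref{fig:spinfliptorus} is then read off from the general rule of Figure~\ref{fig:spinflip} under the same identification $c=a$, $d=b$: the interior edge is removed and the four frontier edges, which in the general picture carry orientations $\epsilon_a,\epsilon_b,\epsilon_c,\epsilon_d$, here carry $\epsilon_a,\epsilon_b,\epsilon_a,\epsilon_b$, and the flip reverses the orientation on the new interior edge while the induced effect on the (now identified) boundary edges is to negate $\epsilon_b$ on both copies, exactly as drawn. The one genuinely delicate point — and the main obstacle — is bookkeeping the sign/orientation conventions consistently: the general flip picture of Figure~\ref{fig:spinflip} assigns its $\epsilon$'s to five \emph{distinct} edges, and one must verify that when opposite sides of the quadrilateral are glued in $F_1^1$ the two occurrences of each edge receive compatible induced orientations, so that the reversal move is well-defined on the quotient. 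This is precisely where the erratum footnote to \cite[Cor.~B.2]{IPZ1} enters, and carrying it out carefully (tracking which triangle's reversal move is applied and in what order) is the crux; the rest is the straightforward specialization above.
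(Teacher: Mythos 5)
The bosonic/fermionic flip formulas are fine: substituting $c=a$, $d=b$ into Theorem~\ref{thm:super} gives $\chi=a^2/b^2$ and hence $ef=a^2+b^2+ab\,\sigma\theta$, $\theta'=(b\theta+a\sigma)/\sqrt{a^2+b^2}$, $\sigma'=(b\sigma-a\theta)/\sqrt{a^2+b^2}$ exactly as you compute, and this is essentially how the result recalled from \cite[Appendix~B]{IPZ1} is obtained. Two points, however, are not actually established in your write-up. First, a minor one: the four components are counted by the spin structures of $F_1^1$, i.e.\ by the affine $H^1(F;\mathbb{Z}/2)\cong(\mathbb{Z}/2)^2$-space of Theorem~\ref{thm:super}, not by the ``overall $\mathbb{Z}_2$ sign on the pair of $\mu$'s'' --- that $\mathbb{Z}_2$ is an identification \emph{within} each component, so your stated reason for ``four'' is wrong even though the count is right.

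The genuine gap is the spin-structure transformation, which you assert can be ``read off'' from Figure~\ref{fig:spinflip} and then defer as ``the crux.'' The naive specialization does not give the claimed answer: in the general rule only $\epsilon_b$ is negated while $\epsilon_d$ is unchanged, so under the identifications $c=a$, $d=b$ the two copies of the edge $b$ receive the orientations $-\epsilon_b$ and $+\epsilon_b$, which is not a well-defined orientation assignment on the quotient surface; moreover, a single reversal move (negating the three frontier edges of one of the two new triangles) applied to this naive output does not produce the configuration of Figure~\ref{fig:spinfliptorus} either, since it simultaneously disturbs the orientations on the copies of $a$ and on the new diagonal. Reconciling this requires genuinely re-deriving the rule for the self-glued quadrilateral --- e.g.\ from the definition of the combinatorial spin structure and its equivalence by reversals in \cite{PZ}, tracking which representative orientation is chosen at each stage --- and this is exactly the point at which \cite[Corollary~B.2]{IPZ1} contained the error that the footnote corrects. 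Since you explicitly leave this bookkeeping undone, the spin-structure part of the Proposition (the only part of the statement that is not a routine specialization) is not proved by your argument.
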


\begin{remark}
 For the surface $F^1_1$, there are four spin structures,  one being odd and the other three even. This is detected by the parity of the Arf invariant \cite{arf}, which is  preserved by the mapping class group. 
\end{remark}

\section{Super geodesics}
\label{sec:supergeo}

There is a subtlety regarding geodesics in passing from classical bosonic Riemann surfaces to super Riemann surfaces in that not all geodesics in the upper sheet of the two-sheeted hyperboloid are asymptotic to ${\mathcal L}_0$ and consequently may have no projection to the $(1|1)$-dimensional ideal boundary of the super hyperbolic plane.  We find by explicit calculation, however, that the monodromy along any simple closed super geodesic on a once-punctured super torus can be diagonalized in $\OSp(1|2)$.  Thus, the geodesic lift of the curve to the hyperboloid is therefore indeed asymptotic to ${\mathcal L}_0$, and its monodromy can be conjugated into the subgroup
$\SL_2({\mathbb R})<\OSp(1|2)$.
Moreover, the relationship between trace of monodromy and length of closed curve familiar from hyperbolic surfaces (which we shall recall later) then naturally promotes to a relationship between supertrace and super length. This extension being the main result of this section.\medskip

We first consider the case of arbitrary space-like geodesics in super Minkowski space
\footnote{For a general discussion of geodesics on a supermanifold we refer to the book \cite[\S2.7]{dewitt}. Instead of following the Christoffel symbols approach to geodesics, we use the variational principle based methods which uses Lagrange multiplier technique.}:

\begin{theorem}\label{Thm:geodesic}
Geodesics on the super hyperboloid of two sheets are characterized 
by the equation
\begin{eqnarray}
{\bf x}={\bf u}\cosh t+{\bf v}\sinh t,
\end{eqnarray}
where $\langle{\bf u}, {\bf u}\rangle =1$, $\langle{\bf v}, {\bf v}\rangle =-1$ and $\langle{\bf u}, {\bf v}\rangle =0$. 
The asymptotes of super geodesics are given by the rays in $\mathcal{L}$ determined by the vectors
\begin{eqnarray}
\bf e=\bf u+ \bf v, \quad \bf f=\bf u-\bf v.
\end{eqnarray}
Conversely,  two rays in $\mathcal{L}$ uniquely define a geodesic where ${\bf e}, {\bf f}$ give rise to ${\bf u}, {\bf v}$ in accordance with this formula.   
\end{theorem}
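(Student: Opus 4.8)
The statement asserts a bijection between space-like geodesics on the two-sheeted super hyperboloid and unordered-but-here-ordered pairs of null rays in $\mathcal{L}$, together with the explicit change of variables $\mathbf{e}=\mathbf{u}+\mathbf{v}$, $\mathbf{f}=\mathbf{u}-\mathbf{v}$. The plan is to proceed in three stages: (i) verify that a curve of the stated form is indeed a geodesic and that it lies on the hyperboloid; (ii) compute its asymptotic rays and confirm they are $\mathbf{u}\pm\mathbf{v}$; (iii) invert the construction, recovering $(\mathbf{u},\mathbf{v})$ from $(\mathbf{e},\mathbf{f})$ and checking the normalization and orthogonality conditions $\langle\mathbf{u},\mathbf{u}\rangle=1$, $\langle\mathbf{v},\mathbf{v}\rangle=-1$, $\langle\mathbf{u},\mathbf{v}\rangle=0$ hold automatically. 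Throughout, the super inner product $\langle\cdot,\cdot\rangle$ on $\mathbb{R}^{2,1|2}$ is the one recalled in the excerpt, and the key feature I will exploit is that it is even and (super)symmetric, so that $\cosh t$ and $\sinh t$ — being even (bosonic) scalar functions — pass through the bilinear form exactly as in the classical case.

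\emph{Step (i).} Given $\mathbf{x}(t)=\mathbf{u}\cosh t+\mathbf{v}\sinh t$, bilinearity gives $\langle\mathbf{x},\mathbf{x}\rangle=\langle\mathbf{u},\mathbf{u}\rangle\cosh^2 t+2\langle\mathbf{u},\mathbf{v}\rangle\cosh t\sinh t+\langle\mathbf{v},\mathbf{v}\rangle\sinh^2 t=\cosh^2 t-\sinh^2 t=1$ under the three stated conditions, so $\mathbf{x}(t)$ stays on the hyperboloid $\{\langle\mathbf{x},\mathbf{x}\rangle=1\}$. To see it is a geodesic, I would note $\ddot{\mathbf{x}}=\mathbf{x}$, i.e.\ the acceleration is parallel (proportional by a bosonic scalar) to the position vector, which is normal to the hyperboloid; hence the covariant acceleration tangent to the hyperboloid vanishes, the defining property of a geodesic. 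The same differential characterization, $\ddot{\mathbf{x}}=\langle\mathbf{x},\mathbf{x}\rangle\,\mathbf{x}$ together with the space-like normalization of $\dot{\mathbf{x}}$, also shows conversely that \emph{every} space-like geodesic has this form: fix a point $\mathbf{u}=\mathbf{x}(0)$ and unit space-like velocity $\mathbf{v}=\dot{\mathbf{x}}(0)$, which are forced to satisfy $\langle\mathbf{u},\mathbf{v}\rangle=0$ by differentiating $\langle\mathbf{x},\mathbf{x}\rangle=1$, and invoke uniqueness of solutions to the (linear, bosonic-coefficient) ODE $\ddot{\mathbf{x}}=\mathbf{x}$.

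\emph{Steps (ii)–(iii).} Rewriting $\mathbf{x}(t)=\tfrac{1}{2}e^{t}(\mathbf{u}+\mathbf{v})+\tfrac{1}{2}e^{-t}(\mathbf{u}-\mathbf{v})$ exhibits the two exponential rays; as $t\to+\infty$ the normalized vector $e^{-t}\mathbf{x}(t)\to\tfrac12(\mathbf{u}+\mathbf{v})=\tfrac12\mathbf{e}$, and as $t\to-\infty$ it limits (after rescaling by $e^{t}$) to $\tfrac12(\mathbf{u}-\mathbf{v})=\tfrac12\mathbf{f}$, so the asymptotic rays are exactly those spanned by $\mathbf{e}$ and $\mathbf{f}$. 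A direct bilinear computation gives $\langle\mathbf{e},\mathbf{e}\rangle=\langle\mathbf{u},\mathbf{u}\rangle+2\langle\mathbf{u},\mathbf{v}\rangle+\langle\mathbf{v},\mathbf{v}\rangle=1+0-1=0$ and likewise $\langle\mathbf{f},\mathbf{f}\rangle=0$, confirming $\mathbf{e},\mathbf{f}\in\mathcal{L}$, while $\langle\mathbf{e},\mathbf{f}\rangle=\langle\mathbf{u},\mathbf{u}\rangle-\langle\mathbf{v},\mathbf{v}\rangle=2\neq 0$, so the two rays are distinct and not both on the same null line. For the converse direction, given two such rays choose representatives $\mathbf{e},\mathbf{f}$ normalized so that $\langle\mathbf{e},\mathbf{f}\rangle=2$ (possible and unique up to the residual $\mathbf{e}\mapsto s\mathbf{e}$, $\mathbf{f}\mapsto s^{-1}\mathbf{f}$ rescaling, which corresponds precisely to reparametrizing $t\mapsto t+\log s$ — this is the one genuinely careful bookkeeping point), then set $\mathbf{u}=\tfrac12(\mathbf{e}+\mathbf{f})$, $\mathbf{v}=\tfrac12(\mathbf{e}-\mathbf{f})$ and check the three conditions hold by the same bilinear identities read backwards.

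\emph{Main obstacle.} Essentially none of this is deep: the super inner product is even and bilinear over the bosonic scalars $\cosh t,\sinh t$, so every classical identity transcribes verbatim. The one point deserving genuine care — and where I would slow down — is the claim that an asymptote "is a ray in $\mathcal{L}$", i.e.\ that the limiting directions genuinely land in the null cone (rather than in some Grassmann-thickened neighborhood of it) and that the correspondence is a clean bijection once one quotients by the reparametrization rescaling; handling the soul directions of $\mathbf{u},\mathbf{v}$ in the limit, and confirming that the positive-body conditions placed on $\mathcal{L}^{+}$ and $\mathcal{L}_0$ earlier are compatible with this construction, is the only place the super structure intervenes nontrivially, and it should be dispatched by the explicit $2\times 2$ (super)matrix model $A(\mathbf{x})$ recalled just above the statement.
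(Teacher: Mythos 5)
Your proposal is correct, and at its core it coincides with the paper's argument: everything reduces to the second-order equation $\ddot{\bf x}=\lambda{\bf x}$ on the constraint surface $\langle{\bf x},{\bf x}\rangle=1$, whose solution with space-like unit speed is ${\bf u}\cosh t+{\bf v}\sinh t$, followed by the $e^{\pm t}$ splitting for the asymptotes and the normalization $\langle{\bf e},{\bf f}\rangle=2$ for the converse. The one place your route differs is how the equation is obtained: the paper \emph{defines} geodesics on the super hyperboloid via the constrained variational functional $\int\bigl(\sqrt{|\langle\dot{\bf x},\dot{\bf x}\rangle|}+\lambda(\langle{\bf x},{\bf x}\rangle-1)\bigr)dt$, derives $\ddot{\bf x}=2\lambda{\bf x}$ with $\lambda=-\tfrac12\langle\dot{\bf x},\dot{\bf x}\rangle$, and disposes (by leaving it to the reader) of the case $\langle\dot{\bf x},\dot{\bf x}\rangle=+1$, whereas you invoke the submanifold criterion ``geodesic iff the tangential part of $\ddot{\bf x}$ vanishes'' and ODE uniqueness. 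Classically these are equivalent, but in the super setting the variational computation is precisely what gives that criterion its meaning, so you should either take the functional as the definition (as the paper does) or spend a line running the Euler--Lagrange computation rather than citing the criterion as a known ``defining property''; you should also either restrict the statement to space-like geodesics or, like the paper, note that the $\langle\dot{\bf x},\dot{\bf x}\rangle=+1$ case yields no geodesics. On the other hand, your steps (ii)--(iii) — the null computation $\langle{\bf e},{\bf e}\rangle=\langle{\bf f},{\bf f}\rangle=0$, $\langle{\bf e},{\bf f}\rangle=2$, and the observation that the residual rescaling ${\bf e}\mapsto s{\bf e}$, ${\bf f}\mapsto s^{-1}{\bf f}$ is exactly the reparametrization $t\mapsto t+\log s$ — flesh out what the paper compresses into ``the appropriate normalization of light cone vectors'', which is a worthwhile addition.
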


\begin{proof}
To describe geodesics on the upper sheet of the hyperboloid, let us apply the variational principle to minimize the functional 
\[
\int \Big(\sqrt{ |\langle \dot{\bf x}, \dot{\bf x}\rangle|}+\lambda( \langle {\bf x}, {\bf x}\rangle-1)\Big)dt ,
\]
where the dot stands for the derivative with respect to the parameter $t$ along the curve.
The corresponding Euler-Lagrange equations are given by 
\begin{eqnarray}\label{geod}
{\ddot{\bf x}}=2\lambda {\bf x}, \quad \langle {\bf x}, {\bf x}\rangle=1,
\end{eqnarray}
where we may assume that $|\langle \dot{\bf x}, \dot{\bf x}\rangle |=1$ along the geodesic. 
Differentiating the second equation and applying the first equation in (\ref{geod}), we obtain
\begin{eqnarray}
\lambda=-\frac{\langle \dot{\bf x}, \dot{\bf x}\rangle}{2}
\end{eqnarray}

It is left to the reader to check that there are no geodesics with  $\langle \dot{\bf x}, \dot{\bf x}\rangle=1$,
and we focus on the case $\langle \dot{\bf x}, \dot{\bf x}\rangle=-1$ leading to $\lambda=1/2$. 
In this case, the solution to the first equation is given by 
\begin{eqnarray}
{\bf x}={\bf u}\cosh t+{\bf v}\sinh t.
\end{eqnarray}
Imposing the conditions  $-\langle \dot{\bf x}, \dot{\bf x}\rangle=\langle {\bf x}, {\bf x}\rangle=1$, we find that $\bf u, \bf v$ satisfy the conditions of the theorem. 
The remaining part of the assertion follows from the appropriate normalization of light cone vectors.
\end{proof}

\subsection{Generators of the super Fuchsian group}
We next explicitly compute generators of the super Fuchsian group from the coordinates of Corollary~\ref{cor:storus} on $S\tilde T(F_1^1)$.
To this end, choose a fundamental domain as depicted in Figure~\ref{abcd} 
\begin{figure}[h!]
\centering

\begin{tikzpicture}[scale=0.7, baseline, thick]

\draw (0,0)--(3,0)--(60:3)--cycle;

\draw (0,0)--(3,0)--(-60:3)--cycle;

\draw node[above] at (70:1.5){$a$};

\draw node[above] at (30:2.8){$b$};

\draw node[below] at (-30:2.8){$a$};

\draw node[below=-0.1] at (-70:1.5){$b$};

\draw node[above] at (1.5,-0.05){$c$};

\draw node[above] at (1.5,-0.40){$>$};

\draw node[left] at (0,0) {};

\draw node[above] at (60:3) {};

\draw node[right] at (3,0) {};

\draw node[below] at (-60:3) {};

\draw node at (1.5,1){$\theta$};

\draw node at (1.5,-1){$\sigma$};

\draw node[left] at (0,0) {$A$};
\draw node[above] at (60:3) {$B$};
\draw node[right] at (3,0) {$C$};
\draw node[below] at (-60:3) {$D$};

\end{tikzpicture}
\caption{Coordinates for $S\tilde{T}(F_1^1)$}
\label{abcd}
\end{figure}
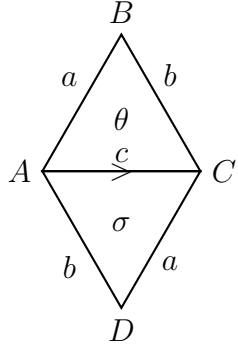
and normalize it as in \cite{PZ}
by applying an element of $\OSp(1|2)$ carrying lifts of the vertices (which lie in ${\mathcal L}_0$ by construction
and which we here identify with the vertices themselves for convenience)
to 
\begin{eqnarray}\label{ABCD}
&&A=u(0,1,0,0,0), ~B=t(1,1,1,\theta, \theta),\nonumber\\
&&C=s(1,0,0,0,0),~ D=(x_1,x_2,-y,\rho, \lambda), 
\end{eqnarray}
where 
\begin{eqnarray}
&&u=\sqrt{2}~\frac{ca}{b}, ~ s=\sqrt{2}~\frac{bc}{a}, ~t=\sqrt{2}~\frac{ab}{c},\nonumber\\
&&x_1=\sqrt{2}~\frac{b^3}{ca}, ~ x_2=\sqrt{2}~\frac{a^3}{cb}, ~ y=\sqrt{2}~\frac{ab}{c},\nonumber \\
&&\lambda=-\sqrt{2}~\frac{a^2}{c}\sigma,~ \rho=\sqrt{2} ~\frac{b^2}{c}\sigma.\nonumber
\end{eqnarray}
Denote generators of the super Fuchsian group by 
$g_a, g_b\in \OSp(1|2)$ respectively mapping ${B,C}$ to ${A,D}$ and ${A,B}$ to ${D,C}$. \medskip

Let us first compute $g_a$ and consider some $U\in \OSp(1|2)$ mapping $C$ to $D$, which is uniquely determined up to precomposition by the stabilizer of $C$. Such an element, stabilizing $C$,  is given by a matrix  
\begin{eqnarray}\label{vmat}
V=V_\beta^{k,q}=\begin{pmatrix}-1&0&0\\ 0&-1&0 \\ 0&0&1\\\end{pmatrix}^k
\begin{pmatrix}1&0&0\\ q&1&\beta \\ \beta &0&1\\\end{pmatrix}.
\end{eqnarray}
 One possible choice for $U$ is given by
\[
U=\begin{pmatrix}\sqrt{\frac{x_1}{s}}&-\sqrt{\frac{x_2}{s}}&\frac{\rho}{\sqrt{x_1s}}\\ \sqrt{\frac{s}{x_2}}&0&0\\ -\frac{\rho}{\sqrt{x_1x_2}}&0&1\\\end{pmatrix},
\]
and the general element mapping $C$ to $D$ therefore has the form $VU$. 
The additional condition that $VU$ maps $B$  to $A$ then completely determines $V$: direct computation shows that 
\[
k=0,\ q=-1-\tfrac{c^2}{a^2}-\tfrac{c}{a}\sigma\theta, ~\beta=\tfrac{c}{a}\sigma-\theta\text{ in (\ref{vmat}),}
\] 
and the resulting element $g_a$ mapping $BC$ to $AD$ is given by
\begin{eqnarray}
g_a=\begin{pmatrix}\frac{b}{c}&-\frac{a^2}{bc}&\frac{a}{c}\sigma\\ -\frac{b}{c}&\frac{a^2}{bc}+\frac{c}{b}+\frac{a}{b}\sigma\theta&-\frac{a}{c}\sigma-\theta \\ -\frac{b}{c}\theta&-\frac{a}{b}\sigma+\frac{a^2}{bc}\theta &1+\frac{a}{c}\theta\sigma\\\end{pmatrix}.
 \end{eqnarray}

The group element $g_b$ mapping $A,B$ to $D,C$ is expressed by
$$
g_b=J\tilde{g}_a,
$$
where $J$ is the usual orthosymplectic matrix and $\tilde{g_a}$ is obtained from $g_a$ by interchanging the $a$ and $b$ in the matrix $g_a$. \medskip

As generators of the fundamental group, $g_a$, $g_b$ in fact depend upon the spin structure (cf.\/\cite{IPZ1})
as explicated in

\begin{theorem}\label{gab}
Consider the coordinates on $S\tilde{T}(F^1_1)$ illustrated in Figure~\ref{fig:coord11}. Suppose also that the spin structure is given by traversing the triangle ABC counterclockwise. Then generators of the fundamental group are given by
\begin{eqnarray}
g_a=\begin{pmatrix}\frac{b}{c}&-\frac{a^2}{bc}&\frac{a}{c}\sigma\\ -\frac{b}{c}&\frac{a^2}{bc}+\frac{c}{b}+\frac{a}{b}\sigma\theta&-\frac{a}{c}\sigma-\theta \\ -\frac{b}{c}\theta&-\frac{a}{b}\sigma+\frac{a^2}{bc}\theta &1+\frac{a}{c}\theta\sigma\\\end{pmatrix},\label{gb}\\
 g_b=J\begin{pmatrix}\frac{a}{c}&-\frac{b^2}{ac}&\frac{b}{c}\sigma\\ -\frac{a}{c}&\frac{b^2}{ac}+\frac{c}{a}+\frac{b}{a}\sigma\theta&-\frac{b}{c}\sigma-\theta \\ -\frac{a}{c}\theta&-\frac{b}{a}\sigma+\frac{b^2}{ac}\theta &1+\frac{b}{c}\theta\sigma\\\end{pmatrix}.\nonumber
 \end{eqnarray}
Provided that the spin structure has orientation on $c$ unchanged, for any reversal of spin orientation on $a$ or $b$, one must precompose the corresponding generator with $J^2\in OSp(1|2)$.
\end{theorem}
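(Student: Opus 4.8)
The proof is a direct matrix computation in $\OSp(1|2)$, bootstrapped from the normalization~(\ref{ABCD}) already in place, together with a bookkeeping argument governing the dependence on the spin structure. The plan is as follows. Having placed the four ideal vertices $A,B,C,D$ of the fundamental domain of Figure~\ref{abcd} at the distinguished points of ${\mathcal L}_0$ recorded in~(\ref{ABCD}) — a choice available by Theorem~\ref{thm:super}, since any two equivariant lifts differ by a global element of $\OSp(1|2)$ — one seeks the side-pairing $g_a$ realizing the ordered transformation $(B,C)\mapsto(A,D)$. First I would parametrize all $g\in\OSp(1|2)$ with $g\cdot C=D$: these form a single coset of the stabilizer of $C$, which (because $C\in{\mathcal L}_0$ has unipotent-type stabilizer) consists exactly of the matrices $V_\beta^{k,q}$ of~(\ref{vmat}), so that, after fixing the explicit representative $U$ displayed above — whose property $U\cdot C=D$ is verified by computing the adjoint action $X\mapsto U^{\rm st}XU$ on the supermatrix of $C$ — every such $g$ has the form $UV_\beta^{k,q}$.

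Next I would impose the second pairing condition $UV_\beta^{k,q}\cdot B=A$. Expanding the adjoint action on the supermatrix attached to $B=t(1,1,1,\theta,\theta)$ and matching it with that of $A=u(0,1,0,0,0)$ — the scales having been arranged in~(\ref{ABCD}) precisely so that the $\lambda$-lengths are compatible across this side-pairing — produces a short system over the Grassmann algebra whose body part pins down $k$ and $q$ and whose soul part pins down $\beta$, returning $k=0$, $q=-1-\tfrac{c^2}{a^2}-\tfrac{c}{a}\sigma\theta$, $\beta=\tfrac{c}{a}$ for the reference spin structure; substituting these into $UV_\beta^{0,q}$ and simplifying yields the first displayed matrix $g_a$. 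For the other generator $g_b$, realizing $(A,B)\mapsto(D,C)$, I would not repeat the calculation but instead exploit the symmetry of the configuration in Figure~\ref{abcd} under interchanging the two complementary triangles, which both swaps $a\leftrightarrow b$ and composes the side-pairing with the orthosymplectic involution; this gives $g_b=J\tilde g_a$, with $\tilde g_a$ obtained from $g_a$ by $a\leftrightarrow b$, and expanding $J\tilde g_a$ produces the second displayed matrix. At this stage one should record the consistency checks: that $g_a$ and $g_b$ satisfy $g^{\rm st}=Jg^{-1}J^{-1}$, and that their $\mu$-invariant and $\lambda$-length entries transform under a flip exactly by the rule of Proposition~\ref{cor:storus}.

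It remains to track the spin structure. Recall that a spin structure on $F^1_1$ is an equivalence class of orientations $\epsilon_a,\epsilon_b,\epsilon_c$ of the three edges modulo triangle reversals, and that by~\cite{sergey} a spin structure is the same datum as a lift of the Fuchsian representation from $\PSL_2(\mathbb{R})$ to $\SL_2(\mathbb{R})<\OSp(1|2)$; the normalization of~\cite{PZ} used above is keyed to a particular choice of the $\epsilon_\bullet$, namely the one for which the triangle $ABC$ is traversed counterclockwise, and this is the reference spin structure of the statement. Reversing $\epsilon_a$ (respectively $\epsilon_b$) while holding $\epsilon_c$ fixed changes only the sign of the $\SL_2$-lift of the side-pairing across that edge, i.e.\ multiplies the corresponding generator by $-I\in\SL_2(\mathbb{R})<\OSp(1|2)$; but $-I$ is precisely $J^2=\mathrm{diag}(-1,-1,1)$, the $k=1$ factor occurring in~(\ref{vmat}), so the generator must be precomposed with $J^2$ as claimed. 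One checks for consistency that $(J^2)^2=I$, that reversing all of $\epsilon_a,\epsilon_b,\epsilon_c$ together is a triangle reversal and hence induces no change, and that the resulting count of orientation classes is four, matching the four components of $S\tilde T(F^1_1)$ in Proposition~\ref{cor:storus}.

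\textbf{Main obstacle.} The algebra is, in the authors' words, ``direct calculation,'' but the genuine work lies in the Grassmann bookkeeping: keeping straight the signs introduced by the supertranspose and by the super-adjoint action, manipulating square roots such as $\sqrt{x_1/s}$ of even quantities carrying nilpotent corrections, and confirming that the resulting $g_a$ actually lands in $\OSp(1|2)$ rather than in some ambient Grassmann-matrix group. The more delicate, less mechanical point is the spin-structure claim: making precise how the normalization of~\cite{PZ} encodes the edge orientations, so that the rule that reversing $\epsilon_a$ or $\epsilon_b$ corresponds to precomposition with $J^2$ is genuinely forced rather than merely consistent, is where I expect the argument to require the most care.
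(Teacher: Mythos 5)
Your proposal follows the same route as the paper: normalize the vertices as in~(\ref{ABCD}), parametrize the maps $C\mapsto D$ as the coset $UV_\beta^{k,q}$ of the stabilizer of $C$, pin down $k=0$, $q=-1-\tfrac{c^2}{a^2}-\tfrac{c}{a}\sigma\theta$, $\beta=\tfrac{c}{a}$ by requiring $B\mapsto A$, obtain $g_b$ from $g_a$ via the $a\leftrightarrow b$ swap and the orthosymplectic $J$, and account for spin-orientation reversal on $a$ or $b$ by precomposition with $J^2=\mathrm{diag}(-1,-1,1)$. The only caveats are cosmetic: the stabilizer of $C$ is not purely unipotent (it has a $\mathbb{Z}/2$ component generated by $J^2$, which is exactly why the $k$-index appears in~(\ref{vmat})), and your justification of the $J^2$ rule via Natanzon's $\SL_2$-lift correspondence is a plausible sketch rather than a detailed derivation, but the paper offers no more detail on that point either.
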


\subsection{Eigenvalues and geodesic lengths}

Super Fuchsian generators in $\OSp(1|2)$ were explicitly given in terms of coordinates on $S\tilde{T}(F_1^1)$ in Theorem \ref{gab}, and we begin by computing the eigenvalues of the
matrix $g_a$ expressed in (\ref{gb}), which are given by 
1 and  $r_a^{\pm 1}$ , where 
\begin{eqnarray}\label{rgen}
r_a+r_a^{-1}&=&\frac{a^2+b^2+c^2+abW_c+acW_b}{bc}=ah-W_a,\\
\text{and hence }r_a&=&\tfrac{1}{2}(ah-W_a+\sqrt{(ah-W_a)^2-4}).\nonumber 
\end{eqnarray}
The bosonic quantities $W_a,W_b,W_c$ are each equal to $\sigma\theta$ up to sign determined by spin structure (see \S\ref{sec:w}), and the super semi-perimeter $h$ is introduced in \S\ref{sec:h}.\medskip

Let us first explore the case when the spin structure  gives $W:=W_b=W_c=\sigma\theta$, which is the case for the matrices $g_a, g_b$ given in (\ref{gb}). 
This immediately leads to the  identities
\begin{eqnarray}
r_a^{\pm}(a^2+c^2-bcr_a^{\pm})=b(c-br_a^{\pm})-a(b+c)Wr_a^{\pm}, 
\end{eqnarray}
where we have set $r_a^{\pm}:= r_a^{\pm 1}$ for convenience. \medskip

One can then directly confirm the following

\begin{proposition}
\label{prop:eigenvectors}
The vectors 
\begin{eqnarray}
&&{\bf v}_{\pm}= 
\begin{pmatrix}
(a^2+c^2+acW-bcr_a^{\pm})(1-r_a^{\pm})-abr_a^{\pm}W\\ 
 b^2(1-r_a^{\pm})\\
ab\sigma+b(c-br_a^{\pm})\theta\\
 \end{pmatrix}\nonumber\\
 &&{\bf v}_0=\begin{pmatrix}
a(c-b)\sigma-a^2\theta\\ 
 ab\sigma+b(c-b)\theta\\
a^2+(b-c)^2\\
 \end{pmatrix}
 \end{eqnarray}
are the eigenvectors of  the matrix $g_a$ given in (\ref{gb}) with respective eigenvalues $r_a^{\pm}$ and $1$.  
\end{proposition}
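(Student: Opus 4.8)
The plan is to verify the proposition by direct substitution, using the Grassmann structure to render the computation finite. We work throughout under the standing hypothesis $W := W_b = W_c = \sigma\theta$ built into the matrix $g_a$ of (\ref{gb}), and we repeatedly exploit nilpotency: $\sigma^2 = \theta^2 = 0$, so that $W^2 = 0$ and every Grassmann monomial of total degree at least two other than $\sigma\theta$ vanishes, together with the sign rule $\theta\sigma = -\sigma\theta = -W$. The only genuinely algebraic inputs are the two relations for the eigenvalues already on record: the quadratic relation $(r_a^\pm)^2 - (ah - W_a)\,r_a^\pm + 1 = 0$ coming from (\ref{rgen}) — with $ah - W_a = \tfrac{1}{bc}\bigl(a^2+b^2+c^2 + a(b+c)W\bigr)$ in the present spin structure — and the displayed identity $r_a^\pm(a^2+c^2-bc\,r_a^\pm) = b(c - b\,r_a^\pm) - a(b+c)W$ recorded just before the statement.

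I would first dispose of ${\bf v}_0$. Its first two entries are odd and its third is even, while the odd entries of $g_a$ occupy exactly the positions $(1,3),(2,3),(3,1),(3,2)$; consequently every term contributing to $g_a{\bf v}_0$ is either a product of two odd quantities, which by $\sigma^2=\theta^2=0$ reduces to a multiple of $W$, or the product of the even $(3,3)$-entry $1+\tfrac{a}{c}\theta\sigma$ with the even third component $a^2+(b-c)^2$. Reading off the purely bosonic part in each slot already reproduces ${\bf v}_0$, and the residual contributions — all proportional to $W$ times an even scalar — cancel on inspection. This short mechanical check settles the eigenvalue-$1$ assertion.

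For the eigenvalue-$r_a^\pm$ assertion I would organize the computation of $g_a{\bf v}_\pm$ by Grassmann degree. Setting all fermions to zero collapses $g_a$ to its block-diagonal body (whose $2\times 2$ block lies in $\SL_2(\mathbb R)$) and ${\bf v}_\pm$ to $\bigl((a^2+c^2-bc\,\bar r)(1-\bar r),\,b^2(1-\bar r),\,0\bigr)^{\mathrm t}$, where $\bar r$ denotes the body of $r_a^\pm$; after cancelling the common factor $1-\bar r$, the eigenvector equation in the first two coordinates is exactly the body specialization of the displayed identity, with (\ref{rgen}) used to eliminate $\bar r^2$. Restoring the fermions, the terms linear in $\sigma$ and in $\theta$ land only in the third coordinate — one substitutes the third component $ab\sigma + b(c-b\,r_a^\pm)\theta$ and matches, again via the displayed identity — whereas the terms proportional to $W=\sigma\theta$ land only in the first two coordinates and are absorbed by the $W$-dependent part of $ah-W_a$ in (\ref{rgen}) together with, once more, the displayed identity. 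No Grassmann monomial of higher degree survives, so these pieces exhaust the verification.

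The one obstacle is the bookkeeping in this last step: one must expand $g_a{\bf v}_\pm$ coordinate by coordinate, split each coordinate into its four Grassmann-homogeneous pieces ($1$, $\sigma$, $\theta$, $\sigma\theta$), and recognize that the displayed identity $r_a^\pm(a^2+c^2-bc\,r_a^\pm)=b(c-b\,r_a^\pm)-a(b+c)W$ is precisely the relation needed to close the $\sigma$-, $\theta$- and $W$-linear equations simultaneously. There is no conceptual difficulty once the bosonic skeleton is in place and the sign rule $\theta\sigma=-\sigma\theta$ is tracked consistently; the fermionic corrections to the eigenvectors are then forced, which is why the statement is one that can be ``directly confirmed''.
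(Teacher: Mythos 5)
Your plan is essentially the same as the paper's, which offers no written proof beyond the phrase ``one can then directly confirm.'' Organizing the verification by Grassmann degree, exploiting nilpotency, and matching the bosonic skeleton with the quadratic in $r_a^{\pm}$ from (\ref{rgen}) before feeding in the fermionic corrections is exactly the right kind of bookkeeping, and the parity accounting you give (which monomials land in which coordinate slot) is correct.

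One substantive caution, however. You invoke the displayed identity $r_a^\pm(a^2+c^2-bc\,r_a^\pm)=b(c-b\,r_a^\pm)-a(b+c)W$ as an input and leave it unexamined, but this identity is \emph{not} an immediate consequence of (\ref{rgen}). Writing $ah-W_a = T+SW$ with $T=\tfrac{a^2+b^2+c^2}{bc}$ and $S=\tfrac{a(b+c)}{bc}$, the quadratic relation $(r_a^\pm)^2-(T+SW)r_a^\pm+1=0$ gives $(r_a^\pm)^2 = T\,r_a^\pm - 1 + SW\,\epsilon(r_a^\pm)$ (since $W$ times the soul of $r_a^\pm$ vanishes), whereas the displayed identity, once cleared of denominators, asserts $(r_a^\pm)^2 = T\,r_a^\pm - 1 + SW$. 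These agree only if $W\bigl(1-\epsilon(r_a^\pm)\bigr)=0$, which fails whenever the underlying bosonic length is nonzero. So either the displayed identity (and hence the fermionic entries of ${\bf v}_\pm$, ${\bf v}_0$) carries a missing factor of $\epsilon(r_a^\pm)$ in the $W$-term, or there is a convention I have not untangled; in any case you should not accept it ``on record.'' Since your verification of the $\sigma$-, $\theta$- and $W$-linear equations hinges precisely on this identity closing them simultaneously, the honest move is to carry out the coordinate-by-coordinate expansion you describe and see what identity the calculation actually demands, rather than assuming the paper's auxiliary relation. The degree-$0$ (body) check will go through as you say; the degree-$2$ check is exactly where the issue, if any, will surface.
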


The proposition immediately yields

\begin{theorem}
The eigenvalues of the generators  of the fundamental group $g_a$, $g_b$ are as follows:
\begin{itemize}
\item There is a common eigenvalue equal to $1$ for both $g_a$ and $g_b$,

\item If $str(g_{a}) >0$ (and $str(g_{b}) >0$, respectively) then the other two eigenvalues of $g_a$ (and $g_b$) are given by $ r_{a}^{\pm }$ (and $ r_{b}^{\pm }$),
\item If $str(g_{a}) <0$ (and $str(g_{b}) <0$, respectively) then the other two eigenvalues of $g_a$ (and $g_b$) are given by 
$ -r_{a}^{\pm }$ (and $ -r_{b}^{\pm }$).
\end{itemize}
\end{theorem}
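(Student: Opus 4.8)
The plan is to read everything off the preceding Proposition together with the spin‑structure dependence recorded in Theorem~\ref{gab}, using two structural facts about $\OSp(1|2)$: that a diagonalizable element is conjugate to $\mathrm{diag}(t,t^{-1},1)$ (the odd‑slot entry being forced to $1$ by $\operatorname{sdet}=1$ and the orthosymplectic condition), and that $J^{2}=\mathrm{diag}(-1,-1,1)$ lies in $\OSp(1|2)$ and negates the $\SL_2$‑body‑block of any element it is multiplied into while fixing the $(3,3)$‑entry.

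First I would dispatch the common eigenvalue and the reference spin structure. Since $g_a$ and $g_b$ are monodromies of simple closed super geodesics on $F_1^1$, the opening discussion of $\S$\ref{sec:supergeo} shows each is diagonalizable inside $\OSp(1|2)$, hence each has spectrum $\{1,\lambda,\lambda^{-1}\}$; this is the first bullet, and it identifies the "other two eigenvalues'' as an inverse pair whose product is $1$ and whose sum is $\operatorname{tr}g_a-1$ (ordinary trace). For the spin structure of the hypothesis (triangle $ABC$ counterclockwise) the Proposition supplies the explicit eigenvectors of the matrix $g_a$ of \eqref{gb}: the even vectors $\mathbf v_\pm$ with eigenvalues $r_a^{\pm1}$ and the odd vector $\mathbf v_0$ with eigenvalue $1$; checking that their bodies are linearly independent confirms this is the full spectrum. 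Evaluating the supertrace on $\{r_a^{\pm1};1\}$ — even eigenvalues minus odd — gives $\operatorname{str}(g_a)=r_a+r_a^{-1}-1$, whose body is at least $1$ because the body of $g_a$ is hyperbolic, so this generator lies in the $\operatorname{str}>0$ alternative with eigenvalues $r_a^{\pm1}$, as claimed. The same holds for $g_b$ after absorbing the prefactor $J$ in \eqref{gb} by a change of the normalizing $\OSp(1|2)$‑element (equivalently, by rerunning the Proposition's eigenvector computation with $a$ and $b$ swapped), yielding eigenvalues $r_b^{\pm1}$.

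Next I would handle the remaining three spin structures. By Proposition~\ref{cor:storus} these are obtained by independently reversing the orientation on $a$ and/or on $b$, and by the last sentence of Theorem~\ref{gab} the corresponding generator is then multiplied by $J^{2}$. Multiplying $g_a$ by $J^{2}$ replaces its $\SL_2$‑body‑block by minus itself, hence negates the bodies of the even eigenvalues, while leaving the odd‑slot eigenvalue equal to $1$; since the modified element is again the monodromy of a simple closed super geodesic it is still diagonalizable in $\OSp(1|2)$, so its full spectrum is $\{-r_a^{\pm1};1\}$ with $r_a$ now computed from the $\mu$‑invariants of the current spin structure through $r_a+r_a^{-1}=ah-W_a$. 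Reading off the supertrace gives $-r_a-r_a^{-1}-1$, of negative body, so this generator lies in the $\operatorname{str}<0$ alternative with eigenvalues $-r_a^{\pm1}$; and likewise for $g_b$. As the four cases — a sign of $\operatorname{str}$ for each of $g_a,g_b$ — exhaust the components of $S\tilde T(F_1^1)$, the theorem follows.

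The main obstacle is making the second‑to‑last paragraph precise at the level of souls rather than bodies: one must verify that the quantity $ah-W_a$ entering $r_a$ really equals $\operatorname{tr}g_a-1$ for the reference generator (this is what underlies the Proposition and fixes the meaning of $r_a$), and that multiplication by $J^{2}$ simultaneously flips $\operatorname{str}$ and sends this quantity to its negative \emph{with the $\mu$‑invariants correctly reordered}, so that the outcome is genuinely $-r_a^{\pm1}$ for a single spin‑adapted $r_a$ rather than two unrelated roots; tracking how the ordered fermion product $W_a$ transforms under a spin reversal is the fussy point. A secondary nuisance is the $g_b$ case, where the $J$‑prefactor in \eqref{gb} breaks the literal $a\leftrightarrow b$ symmetry, so one either conjugates the $J$ away or recomputes the eigenvectors directly — routine Grassmann linear algebra, the only real care being to keep track of parities so that the odd eigenvector is matched with the eigenvalue $1$.
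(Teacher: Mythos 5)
Your outline matches the paper's architecture: the common eigenvalue $1$ from the diagonalizability discussed at the start of \S\ref{sec:supergeo}, the reference spin structure read off the Proposition's eigenvectors with the supertrace giving $r_a+r_a^{-1}-1$, the remaining spin structures via the $J^{2}$-twist of Theorem~\ref{gab}, and $g_b$ by exploiting the $J$-symmetry of the normalized quadrilateral. The gap you flag in your last paragraph is a real one, and it is exactly the step the paper closes differently than you sketch it. Asserting that ``multiplying $g_a$ by $J^{2}$ negates the even eigenvalues'' is not automatic at soul level: $J^{2}=\mathrm{diag}(-1,-1,1)$ does \emph{not} commute with the eigenvector matrix of $g_a$ (conjugation by $J^{2}$ negates the odd off-block entries, and the eigenvector matrix has nonzero odd off-blocks since the even eigenvectors have odd third components and the odd eigenvector has odd top components), so one cannot slide $J^{2}$ through the diagonalization, and for a graded $3\times3$ matrix the odd entries feed back into the characteristic polynomial of $J^{2}g_a$ in a way that is not simply a sign flip of two roots. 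The paper sidesteps all of this by noting that $J^{2}g_a$ is \emph{identically} equal to $g_a$ after the substitution $c,\theta\mapsto -c,-\theta$ (a fact one checks entry-by-entry from \eqref{gb}); hence the eigenvalues of $J^{2}g_a$ are obtained by making that substitution into the already-established relation \eqref{rgen}, $r_a+r_a^{-1}=ah-W_a=\tfrac{a^2+b^2+c^2+abW_c+acW_b}{bc}$, which automatically produces the correctly reordered fermion products — the very bookkeeping you single out as the fussy point. Until you supply this substitution identity (or an equivalent device), your reduction stays at the level of bodies, which is precisely where you acknowledge it stalls. For $g_b$, your proposal to conjugate away the $J$-prefactor or recompute with $a\leftrightarrow b$ swapped is in substance the same move the paper makes by applying $J$ to the quadrilateral $ABCD$ of \eqref{ABCD} to interchange $A$ and $C$.
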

\begin{proof}
Notice that the eigenvalues of $J^2g_a$ are the same as those of the matrix $g_a$ after a substitution $c,\theta\to -c,-\theta$.
This statement, together with formula (\ref{rgen}) proves the claim for the generator $g_a$.
For $g_b$, the claim follows upon applying  $J\in \OSp(1|2)$ to the quadrilateral $ABCD$ from (\ref{ABCD}), which interchanges points $A, C$.
\end{proof}

We are almost ready to formulate the main result of this section, relating the geodesic length to the group element which preserves it. 
We first note that geodesics, as described in this section, can always be mapped to the geodesic on the bosonic subspace of the light cone  (i.e.: where fermionic coordinates vanish). Therefore the corresponding group element is conjugated to the element in $\SL_2(\mathbb{R})$ subgroup, which then admits a ``normal form'' (through diagonalization) conducive to geodesic length computations via standard computations (see below). We emphasize that the ``length" here is not a positive real number, but is instead an even supernumber with positive body, since we work over some Grassmann algebra $\Lambda$.  

\begin{theorem}
The length $\ell_\gamma$ of the closed geodesic $\gamma$ preserved by the generator $g_a$  of the fundamental group is  related to its supertrace by the following formula
\footnote{We define the absolute value $|a|$ of a supernumber $a$ with a nonzero body $\epsilon(a)$ to be $a$ if $\epsilon(a)>0$ and $-a$ if $\epsilon(a)<0$.}
\begin{align}\label{el}
|{str} (g_a)+1|
=2\cosh(\tfrac{\ell_\gamma}{2})=
r_a+r_a^{-1}.
\end{align}
\end{theorem}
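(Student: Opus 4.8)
The plan is to combine the explicit eigenvalue computation already in hand with the classical dictionary between traces and geodesic lengths in $\SL_2(\mathbb{R})$. First I would recall from the preceding discussion that the monodromy $g_a$ fixing $\gamma$ can be diagonalized in $\OSp(1|2)$ and, in fact, conjugated into $\SL_2(\mathbb{R})<\OSp(1|2)$; concretely, by the Proposition just stated the eigenvalues of $g_a$ are $r_a$, $r_a^{-1}$, and $1$, with $r_a$ the larger root of the quadratic $r+r^{-1}=ah-W_a$. Thus in a suitable orthosymplectic frame $g_a$ is block-diagonal, with an $\SL_2(\mathbb{R})$ block having eigenvalues $r_a^{\pm 1}$ and a trivial $1\times 1$ fermionic block. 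Note that the supertrace of a block-diagonal element $\mathrm{diag}(M\,|\,f)$ with $M\in\SL_2$ is $\operatorname{tr}(M)-f$, so here $\operatorname{str}(g_a)=(r_a+r_a^{-1})-1$, which already gives $\operatorname{str}(g_a)+1=r_a+r_a^{-1}$; one then checks $r_a+r_a^{-1}>0$ since $r_a>0$ (it is the product of the relevant light-cone normalizations, hence has positive body, or alternatively since the body of $ah-W_a$ exceeds $2$ as the underlying bosonic monodromy is hyperbolic), so the absolute value is harmless. If instead $\operatorname{str}(g_a)<0$ the eigenvalues are $-r_a^{\pm 1}$ by the previous Theorem, and then $\operatorname{str}(g_a)+1=-(r_a+r_a^{-1})+1\cdot(\text{sign bookkeeping})$; taking absolute values reconciles the two cases, which is precisely why the statement is phrased with $|\operatorname{str}(g_a)+1|$.

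Next I would identify $r_a+r_a^{-1}$ with $2\cosh(\ell_\gamma/2)$. Since $\gamma$ is a simple closed super geodesic and its monodromy is conjugate into $\SL_2(\mathbb{R})$ with eigenvalues $r_a^{\pm 1}$, the super length $\ell_\gamma$ is by definition the translation length of the corresponding hyperbolic isometry of $\mathbb{H}$, which is $2\log r_a$ (taking $r_a>1$, i.e. the expanding eigenvalue). Hence $2\cosh(\ell_\gamma/2)=e^{\ell_\gamma/2}+e^{-\ell_\gamma/2}=r_a+r_a^{-1}$. This is the same computation as in the bosonic case: for a hyperbolic element of $\SL_2(\mathbb{R})$ with eigenvalues $r^{\pm1}$ one has $|\operatorname{tr}|=r+r^{-1}=2\cosh(\ell/2)$; the only new feature is the fermionic block contributing the additive shift by $1$ to the supertrace, which is exactly the $+1$ inside $|\operatorname{str}(g_a)+1|$.

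The main obstacle, and the step deserving the most care, is justifying that the "super length" $\ell_\gamma$ is well-defined and equals $2\log r_a$ — i.e. that passing to the $\SL_2(\mathbb{R})$-block really does compute the hyperbolic length of the projected geodesic, and that $r_a$ (an even element of the Grassmann algebra, not a plain real number) has a well-defined positive real logarithm. Here I would invoke the earlier observation that the geodesic lift to the hyperboloid is asymptotic to $\mathcal{L}_0$, so it descends to an honest geodesic in the super hyperbolic plane whose endpoints are the fixed rays of $g_a$; the length is then the coefficient $t$ in the parametrization ${\bf x}={\bf u}\cosh t+{\bf v}\sinh t$ from the first Theorem of this section needed to travel from a point to its $g_a$-image, and a direct diagonalization shows this is $2\log r_a$. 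Since $r_a=\tfrac12\big(ah-W_a+\sqrt{(ah-W_a)^2-4}\big)$ has body $>1$, $\log r_a$ is defined by the usual power series, and everything is consistent. The remaining content is then the routine sign/absolute-value bookkeeping across the $\operatorname{str}>0$ and $\operatorname{str}<0$ cases handled by the previous Theorem, which I would dispatch in a sentence.
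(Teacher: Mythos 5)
Your proof is correct and follows essentially the same strategy as the paper: diagonalize $g_a$ in $\OSp(1|2)$ to a block-diagonal element lying in the subgroup $\SL_2(\mathbb{R})$ with eigenvalues $\pm r_a^{\pm 1}$ and $1$, and then invoke the classical bosonic relationship between trace and translation length. Your added bookkeeping — spelling out $\operatorname{str}(g_a)+1 = r_a + r_a^{-1}$ via the supertrace convention, handling the $\operatorname{str}<0$ sign case, and noting that $\log r_a$ is defined because $r_a$ has body $>1$ — is all consistent with what the paper leaves implicit.
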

\begin{proof}
Proposition~\ref{prop:eigenvectors} asserts that the diagonalization of $g_a$ takes the form  
\[
\begin{pmatrix}\pm r_a&0&0\\ 0&\pm r^{-1}_a&0 \\ 0&0&1\\\end{pmatrix}\in \OSp(1|2),
\] 
and we note that this is an element of the subgroup $\SL_2(\mathbb{R})< \OSp(1|2)$ (we use the notation $\SL_2(\mathbb{R})$ to refer to the subgroup of special linear transformations with matrix entries from $\Lambda_0$). This diagonal matrix preserves the bosonic part (obtained by setting all fermionic coordinates to zero) of the super hyperboloid of two sheets in super-Minkowski space, as well as its projection to the pure bosonic part on super upper half-plane \footnote{We refer the reader to \cite{PZ} for a detailed discussion of the super hyperboloid, and the $\OSp(1|2)$-equivariant projection to the super upper-half plane.}. Moreover, this group element preserves the pure bosonic geodesic, generated by ${\bf e}=(1,0,0|0.0),~ {\bf f}=(0,1,0|0,0)$ (in the notation of Theorem \ref{Thm:geodesic}).\\

The result then follows
directly from the usual bosonic relationship between trace and eigenvalues (where we just replace $\mathbb{R}$ with $\Lambda_0$), namely, recall that in the classical bosonic case
with a hyperbolic $A\in \PSL_2({\mathbb R})$ corresponding to a closed geodesic of length $\ell$ in a Riemann surface, we have
\begin{align}
|{\rm trace} (A)|
=2\cosh(\tfrac{\ell}{2}).
\end{align}
Indeed, diagonalize $A$ as $\left(\begin{smallmatrix}\lambda &0\\ 0&\lambda^{-1} \end{smallmatrix}\right)$ and
consider the point $i$ on its axis with image $\lambda^2i$, so that
\begin{eqnarray}
\ell=\int^{\lambda^2}_1\frac{dx}{x}=2\ln\lambda,
\end{eqnarray}
whence $e^{\ell/2}+e^{-\ell/2}=\lambda+\lambda^{-1}={\rm trace}(A)$, as claimed. 
\end{proof}

\section{Super perimeter and $\lambda$-lengths}
\label{sec:superlamb}

For the remainder of this paper, unless otherwise explicitly stated, the surface under consideration is
the once-punctured torus $F=F_1^1$. To begin with, we fix an edge orientation representative for a given spin structure on $F$. This representative for the spin structure is allows us to define the $\lambda$-length coordinates for the corresponding connected component of $S\tilde{T}(F)$, and we hitherto refer to this representative simply as the spin structure. We set the $\lambda$-length coordinates as $a,b,c, \sigma, \theta$, so that the fundamental domain is parametrized as per Figure~\ref{fig:coord11}. Note here that we rotate the fundamental domain so that the spin orientation for $c$ agrees with that of Figure~\ref{fig:coord11}.

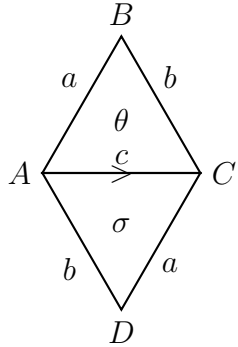
\begin{figure}[h!]

\centering

\begin{tikzpicture}[scale=0.7, baseline, thick]

\draw (0,0)--(3,0)--(60:3)--cycle;

\draw (0,0)--(3,0)--(-60:3)--cycle;

\draw node[above] at (70:1.5){$a$};

\draw node[above] at (30:2.8){$b$};

\draw node[below] at (-30:2.8){$a$};

\draw node[below=-0.1] at (-70:1.5){$b$};

\draw node[above] at (1.5,-0.05){$c$};

\draw node[above] at (1.5,-0.40){$>$};

\draw node[left] at (0,0) {};

\draw node[above] at (60:3) {};

\draw node[right] at (3,0) {};

\draw node[below] at (-60:3) {};

\draw node at (1.5,1){$\theta$};

\draw node at (1.5,-1){$\sigma$};

\draw node[left] at (0,0) {$A$};
\draw node[above] at (60:3) {$B$};
\draw node[right] at (3,0) {$C$};
\draw node[below] at (-60:3) {$D$};

\end{tikzpicture}
\caption{Coordinates for $\tilde{T}(F_1^1)$}
\label{fig:coord11}
\end{figure}

\subsection{$W$-invariant}
\label{sec:w}
For any ideal arc $e$ which is the shared diagonal of two ideal triangles in $F$, the spin structure on $F$ orients $e$ and allows us to designate the two adjoining ideal triangles as being either counter-clockwise or clockwise from $e$. Let $\theta_1$ and $\theta_2$ denote the $\mu$-invariants of the respective triangles clockwise and counter-clockwise from $e$ and define $W_e:=\theta_1\theta_2$. For example, $W_c=\sigma\theta$ in Figure~\ref{fig:coord11}.\medskip 

Although $W$-invariants are defined using an arbitrary initial ideal triangulation containing the edge $e$, the fact that the collection of all ideal triangulations containing $e$ are connected by $e$-preserving flips, coupled with the following lemma, show that $W$-invariants are well-defined.

\begin{lemma}
Given an ideal arc $e$, the quantity $W_e$, which is definitionally dependent on the spin structure orientation on $e$, is invariant under all flips which preserve $e$. 
\end{lemma}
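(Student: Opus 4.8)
The plan is to reduce the claim to a finite local check using the combinatorial structure of flips on the once-punctured torus and the explicit transformation formulas for $\mu$-invariants from Theorem~\ref{thm:super}. Fix the ideal arc $e$. A flip $\Phi_f$ preserving $e$ must be performed along some edge $f \neq e$. First I would observe that $W_e = \theta_1\theta_2$ is a product of the two $\mu$-invariants attached to the triangles flanking $e$, so the quantity can only change if the flip $\Phi_f$ either (i) alters one of those two $\mu$-invariants directly, or (ii) reverses the spin orientation on $e$ (which swaps the roles of ``counter-clockwise'' and ``clockwise'' triangle, hence swaps $\theta_1 \leftrightarrow \theta_2$ — but since $\theta_1\theta_2 = \theta_2\theta_1$ for this bosonic product, even that is harmless, modulo the overall $\mathbb{Z}_2$ sign on all $\mu$-invariants which in any case leaves the product invariant). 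So the whole content is case (i).

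Next I would enumerate, up to the symmetry of $F_1^1$, the possible positions of $f$ relative to $e$. On the once-punctured torus the triangulation has three edges and two triangles, and each edge borders both triangles; I would use this to list the finitely many combinatorial configurations of the quadrilateral being flipped (equivalently, how the two triangles adjacent to $f$ sit with respect to $e$ and to the two triangles adjacent to $e$). In each configuration, I would apply formula~\eqref{eq:muinvars}: flipping $f$ replaces the $\mu$-invariants $\sigma,\theta$ of the two triangles adjacent to $f$ by
\[
\sigma' = \frac{\sigma - \sqrt{\chi}\,\theta}{\sqrt{1+\chi}}, \qquad \theta' = \frac{\theta + \sqrt{\chi}\,\sigma}{\sqrt{1+\chi}},
\]
where $\chi$ is the relevant super cross ratio, and — crucially — leaves untouched the $\mu$-invariant of any triangle not adjacent to $f$. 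The key algebraic identity I would invoke is the one already flagged in the text right after Theorem~\ref{thm:super}: $\sigma'\theta' = \sigma\theta$, i.e. the product of the two $\mu$-invariants across a flip is preserved. Whenever both triangles flanking $e$ are among the triangles adjacent to $f$, we are computing exactly $\sigma'\theta' = \sigma\theta$, so $W_e$ is unchanged; whenever at most one of them is adjacent to $f$, that triangle's $\mu$-invariant is replaced by a linear combination, but then the other triangle flanking $e$ is \emph{not} adjacent to $f$, so I would need to check that $W_e$ is still preserved — and here the point is that on $F_1^1$ a flip preserving $e$ that touches only one of $e$'s flanking triangles forces that flanking triangle's new $\mu$-invariant to be multiplied against the unchanged $\mu$-invariant of a triangle which is one of the two triangles adjacent to $f$, so in fact one always lands back in the $\sigma'\theta'=\sigma\theta$ situation after correctly identifying the triangles. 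I would make this identification of triangles explicit in each of the (few) cases.

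The main obstacle I anticipate is bookkeeping rather than deep mathematics: on the once-punctured torus the two complementary triangles share all three edges, so ``the triangle counter-clockwise from $e$'' and ``the triangle adjacent to $f$'' can coincide in several overlapping ways, and one must be careful that a single flip $\Phi_f$ preserving $e$ genuinely corresponds to a Ptolemy move in the sense of Figure~\ref{fig:ptolemy} with $e$ sitting on the boundary of the flipped quadrilateral rather than being its diagonal. Concretely, I would argue that any flip fixing $e$ on $F_1^1$ is, after applying a mapping-class-group element, the flip along one of the two other edges, that this flip has $e$ as a boundary edge of the ideal quadrilateral, and that the two triangles of that quadrilateral are precisely the two triangles flanking $e$; then the invariance $W_e = \sigma\theta = \sigma'\theta' = W_e'$ is immediate from~\eqref{eq:muinvars}. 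The remaining subtlety — that the spin-orientation on $e$ may flip, per Figure~\ref{fig:spinflip}/\ref{fig:spinfliptorus}, thereby transposing $\theta_1$ and $\theta_2$ — is absorbed by commutativity of the bosonic product $\theta_1\theta_2$, and the global $\mathbb{Z}_2$ sign ambiguity on $\mu$-invariants is absorbed because it scales both factors by $-1$.
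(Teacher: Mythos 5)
Your overall route is the same as the paper's: on $F_1^1$ any flip preserving $e$ is the flip along one of the other two edges, $e$ is then a boundary edge of the flipped quadrilateral whose two triangles are exactly the two triangles flanking $e$, and the computation reduces to the product identity $\sigma'\theta'=\sigma\theta$ coming from \eqref{eq:muinvars} (resp.\ Proposition~\ref{cor:storus}). However, there is a genuine error where you declare the orientation bookkeeping ``absorbed by commutativity of the bosonic product'': the $\mu$-invariants $\theta_1,\theta_2$ are \emph{odd} Grassmann elements, so $\theta_1\theta_2=-\theta_2\theta_1$. The product $W_e$ is even as an element of the algebra (it commutes with everything), but the order of its two odd factors matters up to sign; this is precisely why the definition of $W_e$ invokes the spin orientation on $e$ to decide which flanking triangle is counter-clockwise, and why the lemma emphasizes that $W_e$ is ``definitionally dependent'' on that orientation. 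Concretely, flip along the diagonal $c$ of Figure~\ref{fig:coord11} and track the edge $b$: with the pre-flip orientation one has $W_b=\sigma\theta$, while after the flip the triangle counter-clockwise from $b$ (with the \emph{same} geometric orientation) carries $\theta'$, so one would get $\theta'\sigma'=-\sigma'\theta'=-\sigma\theta$. The invariance is rescued only because the flip simultaneously reverses the spin orientation on $b$ (Figure~\ref{fig:spinfliptorus}), which swaps the counter-clockwise/clockwise designations back and yields $W_b=\sigma'\theta'=\sigma\theta$. In other words, the sign tracking you dismiss is exactly the content of the lemma; with your commutativity claim the argument cannot distinguish $W_e$ from $-W_e$, a distinction the paper needs later (it fixes the sign of $W_\gamma$ in the McShane identity).

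A secondary point: your case ``at most one of the flanking triangles is adjacent to $f$'' never occurs on $F_1^1$, since both complementary triangles are adjacent to every edge; it is good that it is vacuous, because your proposed resolution (``one always lands back in the $\sigma'\theta'=\sigma\theta$ situation'') is not correct as a general mechanism -- on a larger surface a flip touching only one flanking triangle replaces $\theta_1$ by a genuine mixture involving a third triangle's $\mu$-invariant, and $W_e$ would not be preserved by that formula alone. So restrict cleanly to the once-punctured torus (as the paper's standing convention does), carry out the two flips with Proposition~\ref{cor:storus}, and verify $(b\sigma-a\theta)(b\theta+a\sigma)/(a^2+b^2)=\sigma\theta$ together with the orientation changes of Figure~\ref{fig:spinfliptorus}; that honest bookkeeping is the ``straight-forward computation'' the paper's proof refers to.
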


\begin{proof}
Consider an arbitrary ideal triangulation $c,e,f$ used to define $W_e$. We use Figure~\ref{fig:winvariance} to show the invariance of $W_e$ under $e$-preserving flips. Specifically, we assume without loss of generality that we flip along $c$, oriented as per Figure~\ref{fig:winvariance}, and that $e$ and $f$ are either respectively equal to $a$ and $b$, or vice versa. This covers all possible topological configurations for $e$ with respect to the flipped edge.
\begin{figure}[h!]

\centering

\begin{tikzpicture}[scale=0.5, baseline,thick]

\draw (0,0)--(3,0)--(60:3)--cycle;

\draw (0,0)--(3,0)--(-60:3)--cycle;

\draw node[above] at (72:1.2){$\varepsilon_a$};

\draw node[above] at (23:2.9){$\varepsilon_b$};

\draw node[below] at (-22:2.9){$\varepsilon_a$};

\draw node[below=-0.1] at (-74:1.1){$\varepsilon_b$};

\draw node[above] at (1.5,-0.55){$>$};

\draw node[left] at (0,0) {};

\draw node[above] at (60:3) {};

\draw node[right] at (3,0) {};

\draw node[below] at (-60:3) {};

\draw node at (1.5,1.15){$\theta$};

\draw node at (1.5,-1){$\sigma$};

\draw node[above] at (1.5,-0.1){$c$};

\end{tikzpicture}
\begin{tikzpicture}[baseline]

\draw[->, thick](0,0)--(1,0);

\node[above]  at (0.5,0) {};

\end{tikzpicture}
\begin{tikzpicture}[scale=0.5, baseline, thick]

\draw (0,0)--(60:3)--(-60:3)--cycle;

\draw (3,0)--(60:3)--(-60:3)--cycle;

\draw node[above] at (72:1.2){$\varepsilon_a$};

\draw node[above] at (23:2.9){$-\varepsilon_b$};

\draw node[below] at (-22:2.9){$\varepsilon_a$};

\draw node[below=-0.1] at (-74:1.1){$-\varepsilon_b$};

\draw node[left] at (2.0941,0){${\mathbf{\wedge}}$};

\draw node[left] at (0,0) {};

\draw node[above] at (60:3) {};

\draw node[right] at (3,0) {};

\draw node[below] at (-60:3) {};

\draw node at (0.8,0){$\theta'$};

\draw node at (2.2,0){$\sigma'$};

\draw node[left] at (1.8,1){$c'$};
\end{tikzpicture}
\caption{Diagram for checking the invariance of $W$-invariants.}
\label{fig:winvariance}
\end{figure}
When $e=a$, flipping in $c$ preserves the orientation of $e=a$. In addition, $\theta$ is clockwise from $a$ with orientation $\varepsilon_a$ iff. $\theta'$ is clockwise from $a$ with orientation $\varepsilon_a$ (identical statement for $\sigma$ and $\sigma'$). Therefore, $W_{e=a}$ is preserved. On the other hand, when $e=b$, flipping in $c$ changes the orientation of $e=b$. This means that $\theta$ is clockwise from $b$ with orientation $\varepsilon_b$ iff. $\theta'$ is clockwise from $b$ with orientation $-\varepsilon_b$ (identical statement for $\sigma$ and $\sigma'$). Thus, $W_{e=b}$ is also preserved under flipping in $c$. This covers all possible toplogical configurations.

\end{proof}

\subsection{Super (semi-)perimeter}
\label{sec:h}

Recall that the (classical) perimeter, namely, the sum $2({a\over{bc}}+{b\over{ac}}+{c\over{ab}})$
of the $h$-lengths, is invariant under flips. We now generalize this construction for super-tori
and
define
the \emph{super semi-perimeter}
\[
h=\frac{a}{bc}+\frac{b}{ac}+\frac{c}{ab}
+\left(\frac{W_a}{a}+\frac{W_b}{b}+\frac{W_c}{c}\right),
\]
Note in particular that the super semi-perimeter $h$ is symmetric in $a,b,c$.

\begin{proposition} \label{prop:ss} The super semi-perimeter is invariant under flips.
\end{proposition}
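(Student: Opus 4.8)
The plan is to reduce the claim to a finite, explicit check using the combinatorial simplicity of $F = F_1^1$ already recorded in Corollary~\ref{cor:torus} and Proposition~\ref{cor:storus}. Since $MC(F) = Pt(F)$ acts transitively on ideal triangulations of $F$, and every flip on $F$ is (up to relabeling) the flip along the diagonal $c$ in the configuration of Figure~\ref{fig:coord11}, it suffices to verify that the quantity
\[
h = \frac{a}{bc}+\frac{b}{ac}+\frac{c}{ab} + \left(\frac{W_a}{a}+\frac{W_b}{b}+\frac{W_c}{c}\right)
\]
is unchanged under the single flip $c \mapsto f$ with $cf = a^2 + b^2 + ab\,\sigma\theta$ and the $\mu$-invariant transformation $\theta' = (b\theta + a\sigma)/\sqrt{a^2+b^2}$, $\sigma' = (b\sigma - a\theta)/\sqrt{a^2+b^2}$ from Proposition~\ref{cor:storus}. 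First I would record what the three $W$-invariants become after this flip: by the lemma just proved, $W_c$ (defined with respect to the oriented arc $c$) is \emph{not} the relevant object after the flip — rather, the new triangulation has diagonal $f$ and side-arcs, so I need to identify $W_a, W_b, W_f$ in the flipped picture in terms of $\sigma', \theta'$ and the old data, using the spin-structure bookkeeping of Figure~\ref{fig:spinfliptorus}.

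Second, I would split the verification into its bosonic body and its fermionic (soul) part, exploiting that $\sigma\theta$, $\sigma'\theta'$, etc., are nilpotent of order two. The body of $h$ is $\tfrac{a^2+b^2+c^2}{abc}$, which is already known to be flip-invariant (the classical perimeter statement, Corollary~\ref{cor:torus}), except that here $cf = a^2+b^2+ab\sigma\theta$ rather than $a^2+b^2$, so the body matches only modulo soul terms and the $\sigma\theta$ correction must be carried along and shown to cancel against the $W$-terms. Concretely, $\tfrac{1}{f} = \tfrac{c}{a^2+b^2}\bigl(1 - \tfrac{ab\sigma\theta}{a^2+b^2}\bigr)$ to second order, and I would expand $\tfrac{a}{bf}+\tfrac{b}{af}+\tfrac{f}{ab}$ accordingly. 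For the fermionic part I note the key algebraic identity $\sigma'\theta' = \sigma\theta$ (this is exactly the invariance remarked after Theorem~\ref{thm:super}, in the form $b^2\sigma\theta/(a^2+b^2) - \dots$; more precisely $\theta'\sigma' = \tfrac{(b\theta+a\sigma)(b\sigma-a\theta)}{a^2+b^2} = \tfrac{b^2\theta\sigma - a^2\sigma\theta}{a^2+b^2}$, wait — one must be careful with signs of anticommuting products, so I would compute $\theta'\sigma'$ honestly and compare with $W_f = \theta_1'\theta_2'$ for whatever the correct counter-clockwise/clockwise assignment is).

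Third, I assemble: after the flip, $\tfrac{W_a}{a}+\tfrac{W_b}{b}+\tfrac{W_f}{f}$ must equal $\tfrac{W_a}{a}+\tfrac{W_b}{b}+\tfrac{W_c}{c}$ plus exactly the soul correction $-\tfrac{\sigma\theta}{c}\cdot(\text{something})$ needed to absorb the $ab\sigma\theta$ term coming from expanding the bosonic part with $cf = a^2+b^2+ab\sigma\theta$. Since $W_a, W_b$ are themselves flip-invariant for flips preserving $a$ (resp.\ $b$) by the lemma, and the flip along $c$ is such a flip for both $a$ and $b$, I only need $W_a, W_b$ to be genuinely unchanged — which the lemma gives — so the entire discrepancy is concentrated in the $W_c/c$ versus $W_f/f$ comparison and the $\sigma\theta$ correction to $1/f$. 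The main obstacle I anticipate is purely bookkeeping: getting the spin-structure orientation conventions in Figure~\ref{fig:spinfliptorus} exactly right so that the signs in $W_f = \pm\theta'\sigma'$ and in $\theta'\sigma' = \pm\sigma\theta$ conspire correctly, and making sure that "counter-clockwise/clockwise from $e$" is applied consistently before and after the flip. Once the signs are pinned down, the remaining computation is a short nilpotent expansion with the single relation $(\sigma\theta)^2 = 0$, and everything cancels by symmetry in $a,b$.
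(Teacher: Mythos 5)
Your plan would reach the same conclusion, but it is considerably more computational than the paper's own proof, which avoids the body--soul decomposition entirely. The paper works uniformly in the $W$-formalism: writing the super Ptolemy relation in the form $dc = a^2 + b^2 + abW_c$ and dividing by $abc$ gives
\[
\tfrac{d}{ab} = \tfrac{a}{bc} + \tfrac{b}{ac} + \tfrac{W_c}{c},
\]
and substituting this into the post-flip value of $h$, then invoking the preceding lemma (that $W_a$, $W_b$ are preserved by the flip fixing $a$ and $b$), cancels all terms except a single residual expression proportional to the super Ptolemy relation itself, which is therefore zero. No nilpotent expansion of $1/d$ appears, and bodies and souls are never separated. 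Your route---expanding $\tfrac{1}{f} = \tfrac{c}{a^2+b^2}\bigl(1 - \tfrac{ab\,\sigma\theta}{a^2+b^2}\bigr)$, tracking $\theta'\sigma'$ against $W_f$ and the spin orientation on the flipped diagonal---would also work, but it concentrates precisely the sign-convention bookkeeping you correctly flag as the main obstacle into places where the paper's $W$-formalism absorbs it in one stroke: the relevant comparison is between $W_c$ and $W_d$, both already packaged as orientation-decorated products, and their relation comes directly from the super Ptolemy rather than from re-deriving $\sigma'\theta'=\sigma\theta$ and reconciling clockwise/counter-clockwise sides by hand. Two smaller points: (i) the reduction ``up to relabeling'' to a single flip is really just the manifest symmetry of $h$ in $a,b,c$, not an appeal to mapping class group transitivity; and (ii) the ``body of $h$'' is not $\tfrac{a^2+b^2+c^2}{abc}$ but the termwise image under $\epsilon$, since the super $\lambda$-lengths $a,b,c$ themselves carry nontrivial souls---so the classical perimeter invariance covers only the body component of the claim, and the soul component is exactly where the work (and all the $W$-terms and the $\sigma\theta$ correction) live.
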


\begin{proof}
Suppose that the flip on ideal arc $c$ produces $d$, 
\begin{figure}[h!]

\centering

\begin{tikzpicture}[scale=0.7, baseline, thick]

\draw (0,0)--(3,0)--(60:3)--cycle;

\draw (0,0)--(3,0)--(-60:3)--cycle;

\draw node[above] at (70:1.5){$a$};

\draw node[above] at (30:2.8){$b$};

\draw node[below] at (-30:2.8){$a$};

\draw node[below=-0.1] at (-70:1.5){$b$};

\draw node[above] at (1.5,-0.05){$c$};

\draw node[above] at (1.5,-0.40){$>$};

\draw node[left] at (0,0) {};

\draw node[above] at (60:3) {};

\draw node[right] at (3,0) {};

\draw node[below] at (-60:3) {};

\draw node at (1.5,1){$\theta$};

\draw node at (1.5,-1){$\sigma$};

\end{tikzpicture}
\begin{tikzpicture}[baseline]

\draw[->, thick](0,0)--(1,0);

\node[above]  at (0.5,0) {};

\end{tikzpicture}
\begin{tikzpicture}[scale=0.7, baseline, thick]

\draw (0,0)--(60:3)--(-60:3)--cycle;

\draw (3,0)--(60:3)--(-60:3)--cycle;

\draw node[above] at (70:1.5){$a$};

\draw node[above] at (30:2.8){$b$};

\draw node[below] at (-30:2.8){$a$};

\draw node[below=-0.1] at (-70:1.5){$b$};

\draw node[left] at (1.6,1){$d$};

\draw node[left] at (1.92,0){${\mathbf{\wedge}}$};

\draw node[left] at (0,0) {};

\draw node[above] at (60:3) {};

\draw node[right] at (3,0) {};

\draw node[below] at (-60:3) {};

\draw node at (0.8,0){$\theta'$};

\draw node at (2.2,0){$\sigma'$};

\end{tikzpicture}
\end{figure}
where the orientation of arrows is as in the Proposition~\ref{cor:storus}.
The super Ptolemy equation therefore gives 
$dc=a^2+b^2+abW_c$, and we divide by $abc$ to produce
\begin{align}
\frac{d}{ab}=\frac{a}{cb}+\frac{b}{ac}+\frac{W_c}{c}.\label{eq:sptol}
\end{align}
We must confirm that
\[
\frac{a}{bc}+\frac{b}{ac}+\frac{c}{ab}
+\left(\frac{W_a}{a}+\frac{W_b}{b}+\frac{W_c}{c}\right)
=\frac{d}{ab}+\frac{a}{bd}+\frac{b}{ad}
+\left(\frac{W_d}{d}+\frac{W_b}{b}+\frac{W_a}{a}\right),
\] 
and according to equation~\eqref{eq:sptol}, the right-hand side is equal to
\[
\left(\frac{b}{a}+\frac{a}{b}\right)\frac{1}{d}
+\frac{a}{cb}+\frac{b}{ac}+\frac{W_c}{c}
+\left(\frac{W_b}{b}+\frac{W_a}{a}+\frac{W_d}{d}\right).
\]
Thus, the difference of the lefthand and righthand sides is given by
\[
\frac{c}{ab}-\left(\frac{a^2+b^2}{ab}\right)\frac{1}{d}-\frac{W_d}{d},
\]
which vanishes by the super Ptolemy equation.
\end{proof}

\subsection{Limiting behavior of $\lambda$-lengths}
\label{sec:split}
The remainder of this section is dedicated to a key computation regarding the behavior of $\lambda$-lengths under repeated Dehn twists. Endow $F$ with an auxiliary hyperbolic metric and consider an arbitrary ideal geodesic on $F$ labeled by $a$ and a homotopy class $\gamma_a\in\pi_1(F)$ whose geodesic representative is disjoint from $a$. We formulate the aforementioned computation in terms of the infinite-cyclic cover $S$ of the surface obtained from cutting $F$ along $a$, where the covering group is generated by $\gamma_a\in\pi_1(F)$.  The total space of this cover is homeomorphic to the bi-infinite strip 
\[
S=\left\{
(x,y)\in \mathbb{R}^2-\mathbb{Z}^2 \mid 0\leq x\leq 1
\right\}
\]
with the deleted $\mathbb{Z}^2$ corresponding to lifts of the puncture.  For all $m,n\in\mathbb{Z}$, we label the diagonal segments connecting
$(0,m)$ to $(1,n+m)$ by
$b_{-n}$. 
In particular, we may respectively set the $b,c,d$ terms used earlier in this section to be $b_n=b$, $b_{n-1}=c$ and $b_{n+1}=d$, for some $n$.

\begin{figure}[h!]
\includegraphics[scale=1.5]{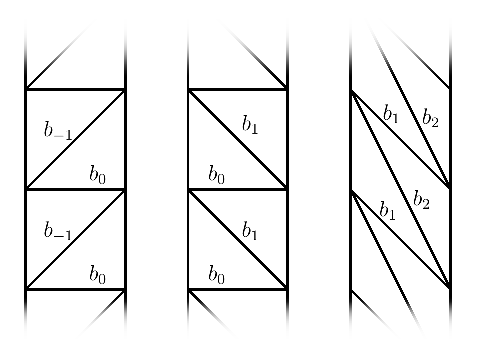}
\caption{A sequence of $b_n$, $n\in\mathbb{Z}$, related by flips and/or Dehn twists. The vertical edges all correspond to lifts of the ideal geodesic labeled by $a$.}
\label{fig:flipsequence}
\end{figure}

Given the above notation (see Figure~\ref{fig:flipsequence}), a flip in $b_{n-1}$ takes $a,b_{n-1},b_n$ respectively to $a,b_{n+1},b_n$. Following this with a flip in $b_n$, takes $a,b_{n+1},b_n$ to $a,b_{n+1},b_{n+2}$. The composition of these two flips corresponds to the action of a double (right) Dehn twist along $\gamma_a$ lifted to $S$.\medskip

We solve the following recursion
\begin{align}
h=\tfrac{b_{n+1}}{ab_n}+\tfrac{b_{n-1}}{ab_n}
+\left(\tfrac{W_a}{a}+\tfrac{W_{b_n}}{b_n}\right),\label{eq:sperimeter}
\end{align}
which arises from the definition of $h$ and the following identity
\[
\frac{b_{n+1}}{ab_n}+\frac{b_{n-1}}{ab_n}+\frac{W_{b_n}}{b_n}
=\frac{b_{n+1}}{ab_n}+\frac{a}{b_nb_{n+1}}+\frac{b_{n}}{ab_{n+1}}
+\left(\frac{W_{b_n}}{b_n}+\frac{W_{b_{n+1}}}{b_{n+1}}\right).
\]
The latter identity in turn comes from equation~\eqref{eq:sptol} by taking $b=b_n$, $c=b_{n+1}$ and $d=b_{n-1}$ to get
\[
\frac{b_{n-1}}{ab_n}
=\frac{a}{b_nb_{n+1}}+\frac{b_{n}}{ab_{n+1}}
+\frac{W_{b_{n+1}}}{b_{n+1}}.
\]
Thus, equation~\eqref{eq:sperimeter} gives the 3-term recursion
\begin{align}
0=b_{n+1}-ab_n(h-\tfrac{W_a}{a})+b_{n-1}+aW_{b_n}.\label{eq:recursion}
\end{align}

Although \eqref{eq:recursion} seems at first glance to display sophisticated dependence on $W_{b_n}$, Equation~\eqref{eq:muinvars} tells us that 
\[
W_{b_{n-1}}=W_{b_{n+1}},
\]
and hence the sequence $\{W_{b_n}\}$ is $2$-cyclic. Moreover, the fact that there are only two triangles for any ideal triangulation of a once-punctured torus means that $W_{b_n}$ and $W_{b_{n+1}}$ are equivalent up to sign, and hence the $\{W_{b_n}\}$ either remain constant (i.e., $W_{b_n}=W_{b_{n-1}}$) or oscillate in sign (i.e., $W_{b_n}=-W_{b_{n-1}}$). We give both of these cases (by depicting the relevant $n=0,1$ cases) in Figure~\ref{fig:oscillate}. Specifically, the left figure illustrates depicts the general configuration for when the $\{W_{b_n}\}$ remain constant (i.e.: $W_{b_0}=W_{b_1}$). Note that the orientation on $a$ here is arbitrary, and we have (up to rotation) placed $b_0$ in this specific configuration. The right figure likewise depicts the general configuration for when the sequence oscillates in sign (i.e.: $W_{b_0}=-W_{b_1}$).
\begin{figure}[h!]

\centering

\begin{tikzpicture}[scale=0.7, baseline, thick]

\draw (0,0)--(3,0)--(60:3)--cycle;

\draw (0,0)--(3,0)--(-60:3)--cycle;

\draw node[above] at (70:1.5){$a$};

\draw node[above] at (30:2.8){$b_1$};

\draw node[below] at (-30:2.8){$a$};

\draw node[below=-0.1] at (-70:1.5){$b_1$};

\draw node[above] at (1.5,-0.05){$b_0$};

\draw node[above] at (1.5,-0.40){$>$};

\draw (2.43,1) node[rotate=-60] {$>$};

\draw (0.58,-1) node[rotate=-60] {$>$};

\draw node[left] at (0,0) {};

\draw node[above] at (60:3) {};

\draw node[right] at (3,0) {};

\draw node[below] at (-60:3) {};

\draw node at (1.5,1){$\theta$};

\draw node at (1.5,-1){$\sigma$};

\end{tikzpicture}
\begin{tikzpicture}[baseline]


\node[above]  at (0.5,0) {};

\end{tikzpicture}
\begin{tikzpicture}[scale=0.7, baseline, thick]

\draw (0,0)--(3,0)--(60:3)--cycle;

\draw (0,0)--(3,0)--(-60:3)--cycle;

\draw node[above] at (70:1.5){$a$};

\draw node[above] at (30:2.8){$b_1$};

\draw node[below] at (-30:2.8){$a$};

\draw node[below=-0.1] at (-70:1.5){$b_1$};

\draw node[above] at (1.5,-0.05){$b_0$};

\draw node[above] at (1.5,-0.40){$>$};

\draw (2.43,1) node[rotate=120] {$>$};

\draw (0.58,-1) node[rotate=120] {$>$};

\draw node[left] at (0,0) {};

\draw node[above] at (60:3) {};

\draw node[right] at (3,0) {};

\draw node[below] at (-60:3) {};

\draw node at (1.5,1){$\theta$};

\draw node at (1.5,-1){$\sigma$};

\end{tikzpicture}

\caption{Constant (left) vs oscillatory (right) $\{W_{b_n}\}$.}
\label{fig:oscillate}
\end{figure}
In any case, this is a very mild dependence on the spin structure, and for all analytical purposes means that $W_{b_n}$ is dwarfed by the behavior of other terms and hence essentially behaves as a constant. In particular, we may apply standard difference equation techniques.\medskip

The solution for the homogeneous part of equation~\eqref{eq:recursion} takes the form $b_n^{\mathrm{hom}}=xr^n+yr^{-n}$, where the conditions on the bodies (see Section~\ref{sec:grassmann}) $\epsilon(x), \epsilon(y)$ of $x, y$ correspondingly, are as follows: $\epsilon(x),\epsilon(y)>0$, and
\begin{align}
r&=\tfrac{1}{2}(ah-W_a+\sqrt{(ah-W_a)^2-4}).\label{eq:rvalue}\\
&=\left(\frac{ah+\sqrt{a^2h^2-4}}{2}\right)
\left(1-\frac{W_a}
{\sqrt{a^2h^2-4}}
\right).
\end{align}
Moreover, a particular solution is given by $b_n^{\mathrm{part}}
=\tfrac{aW_{b_{n}}}{ah-W_{a}\mp2}=\tfrac{aW_{b_{n}}}{ah\mp 2}$, where the minus sign is taken if $W_{b_n}=W_{b_{n-1}}$ and the plus sign is taken if $W_{b_n}=-W_{b_{n-1}}$. Therefore, the complete solution is given by
\begin{align}
b_n
=b_n^{\mathrm{hom}}+b_n^{\mathrm{part}}
=xr^n+yr^{-n}+\tfrac{aW_{b_{n}}}{ah\mp 2},\label{eq:recursol}
\end{align}
for $r$ given in equation~\eqref{eq:rvalue} where $\epsilon(x),\epsilon(y)>0$.

\section{The spectrum of super $\lambda$-lengths}
\label{sec:bodysoul}

\S\ref{sec:dualcomplex} to \S\ref{sec:asymptotics} are straight-forward adaptations of Bowditch's work \cite{Bpro, Bmar} to the super Riemann surface context. We introduce novel analytical ideas in \S\ref{sec:comparison} to establish the body-soul comparison (Theorem~\ref{thm:comparison}), which is central to the proof of the super McShane identity.

\subsection{Dual complex}\label{sec:dualcomplex}

Theorem~\ref{thm:icd} asserts the existence of an ideal triangulation of $T(F)=\mathbb{D}$  given by the Farey tessellation $\tau_*$ of $\mathbb{D}$ (see $\S$~\ref{sec:fareytess}). The theory further states that each face of this ideal triangulation is labeled by a distinct ideal triangulation of $F$, each edge in $\tau_*$ is labeled by a distinct non-maximal  ideal cell-decomposition of $F$, and each ideal vertex correponds to a distinct ideal arc on $F$.\medskip

On the other hand, the set of ideal arcs on $F$ is topologically ``dual'' to the set of (isotopy classes of) simple closed curves on $F$ in the sense that for each ideal arc $a$ there is a unique simple closed curve $\gamma_a$ which is disjoint from $a$, and vice versa. This duality suggests the following abstract \emph{dual ``complex"} $\Omega$ to the Farey tessellation $\tau_*$
\begin{itemize}
\item
the faces $\mathcal{F}(\Omega)$ of $\Omega$ are labeled by simple closed curves on $F$;
\item
the edges $\mathcal{E}(\Omega)$ of $\Omega$ are labeled by pairs of simple closed curves on $F$ which transversely intersect once;
\item
the vertices $\mathcal{V}(\Omega)$ of $\Omega$ are labeled by triples of simple closed curves on $F$ which pairwise transversely intersect once.
\end{itemize}

We clarify that $\Omega$ is not a cell-complex as each face in $\mathcal{F}(\Omega)$ is bordered by an infinite sequence of distinct vertices and edges. However, we {can} nonetheless na\"ively geometrically realize $\Omega$ in the following way

\begin{itemize}
\item 
let the vertices $\mathcal{V}(\Omega)$ be the centers of the ideal triangles constituting $\tau_*$;
\item 
let the edges $\mathcal{E}(\Omega)$ be the geodesic arcs joining the centers of adjacent ideal triangles in $\tau_*$;
\item
let the faces $\mathcal{F}(\Omega)$ be the connected components of the complement $\mathbb{D}-(\mathcal{V}(\Omega)\cup \mathcal{E}(\Omega))$.
\end{itemize}
This embeds $\Omega$ in $\mathbb{D}$ in a manner dual to $\tau_*\subset\mathbb{D}$ (see Figure~\ref{fig:farey}). We regard $\mathcal{F}(\Omega)$ as the set of \emph{complementary regions}  \cite{Bpro} to $\tau_*$ in $\mathbb{D}$.

\begin{figure}[h!]
\includegraphics[scale=0.5]{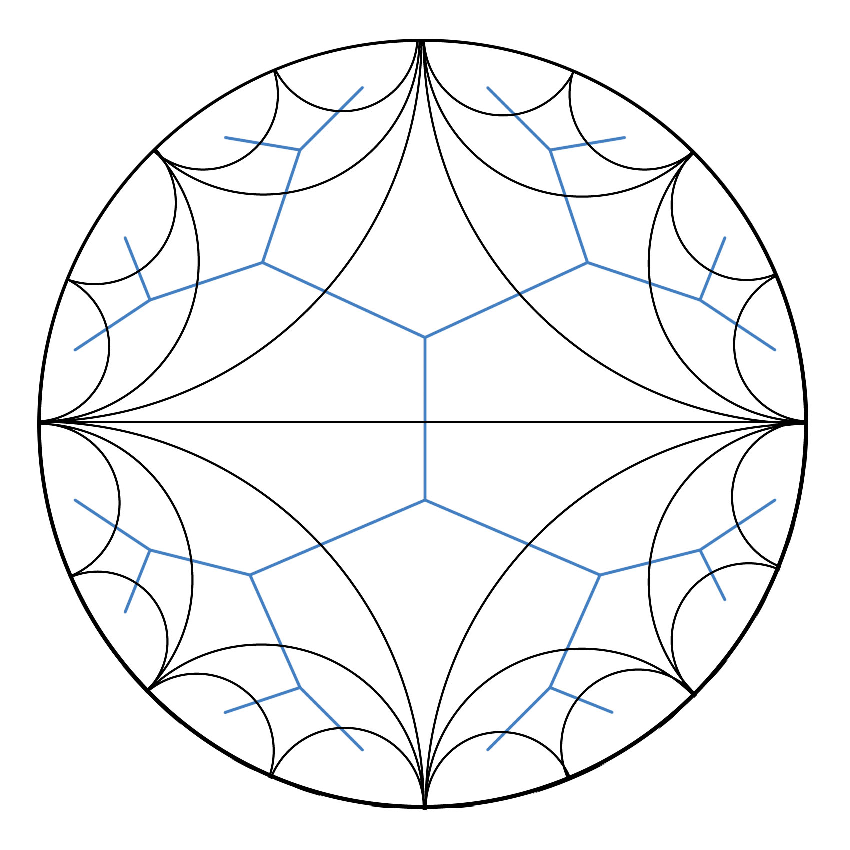}
\caption{The Farey tessellation $\tau_*$ (black) and its dual complex $\Omega$ (blue).}
\label{fig:farey}
\end{figure}

\subsection{Super Markoff map}
Every element of the decorated super Teichm\"uller space $S\tilde{T}(F)$ defines a function $\phi:\mathcal{F}(\Omega)\to \mathbb{R}_{S[N]}$ by assigning to a complementary region $\mathbf{x}\in\mathcal{F}(\Omega)$ the super $\lambda$-length $x=:\phi(\mathbf{x})$ of the ideal arc dual to $\mathbf{x}$. We refer to any such function $\phi$ as a \emph{super Markoff map} and note that it necessarily satisfies
\begin{itemize}
\item the \emph{vertex relation} (figure~\ref{fig:proof}): for any three complementary regions $\mathbf{a},\mathbf{b},\mathbf{c}$ meeting at the same vertex
\begin{align}
a^2+b^2+c^2+abW_c+acW_b+bcW_a=habc,\label{eq:vertex}
\end{align}
as follows from the definition of super semi-perimeter $h$, and
\item the \emph{edge relation} (figure~\ref{fig:proof}): for complementary regions $\mathbf{a},\mathbf{b},\mathbf{c},\mathbf{d}$ arranged so that $\mathbf{a},\mathbf{d}$ touch the two ends of an edge lying on the boundary of both $\mathbf{b}$ and $\mathbf{c}$,
\begin{align}
a+d+(bW_c+cW_b)=hbc\label{eq:edge}
\end{align}
as follows from (\ref{eq:vertex}) and the super Ptolemy relation 
\[
ad=b^2+c^2+bcW_a.
\] 
\end{itemize}

\subsection{Finite subtree sum}

Given a subtree $T$ of the $1$-skeleton of the dual complex, we define
\begin{itemize}
\item $\mathcal{C}(T)$ to be the set of oriented edges $\vec{e}\in\vec{\mathcal{E}}(\Omega)$ with its head in $T$ but its tail outside of $T$;
\item $\mathcal{F}(T)$ to be the set of complementary regions whose closure intersects with $T$. 
\end{itemize}

Given an oriented edge $\vec{e}\in\vec{\mathcal{E}}(\Omega)$ positioned with
\begin{itemize}
\item head directed towards $\mathbf{c}$,
\item $\mathbf{a}$ and $\mathbf{b}$ on the two sides of $\vec{e}$, 
\end{itemize}
we define the function 
\begin{align}
\psi(\vec{e}):=\tfrac{1}{h}(\tfrac{c}{ab}+\tfrac{W_a}{2a}+\tfrac{W_b}{2b}).
\end{align}
Using this notation, the vertex relation \eqref{eq:vertex} and the edge relation \eqref{eq:edge} respectively become
\begin{align}
\psi(\vec{e}_1)+\psi(\vec{e}_2)+\psi(\vec{e}_3)=1
\quad \text{and}\quad 
\psi(\vec{e})+\psi(-\vec{e})=1,
\end{align}
where $\vec{e}_1,\vec{e}_2,\vec{e}_3$ denote oriented edges which point at the same vertex, and where $-\vec{e}$ denotes $\vec{e}$ endowed with the opposite orientation. 
Given the relations on the function $\psi$ above, we arrive at the following property, which is easily confirmed inductively.
\begin{proposition}
\label{thm:finitesum}
Given any finite subtree $T$, we have
\begin{align}
\sum_{\vec{e}\in C(T)}\psi(\vec{e})=1.\label{eq:finsum}
\end{align}
\end{proposition}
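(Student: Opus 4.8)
The plan is to prove \eqref{eq:finsum} by induction on the number of vertices of the (nonempty) finite subtree $T$, using only that the $1$-skeleton of $\Omega$ is a trivalent tree together with the two local identities for $\psi$ recorded just before the statement: the \emph{vertex relation} $\psi(\vec{e}_1)+\psi(\vec{e}_2)+\psi(\vec{e}_3)=1$ for the three oriented edges pointing at a common vertex, and the \emph{edge relation} $\psi(\vec{e})+\psi(-\vec{e})=1$. For the base case, let $T$ consist of a single vertex $v$. Then $\mathcal{C}(T)$ is precisely the set of the three oriented edges of $\Omega$ with head $v$ (each has its tail at a neighbouring vertex, hence outside $T$), so the vertex relation gives $\sum_{\vec{e}\in\mathcal{C}(T)}\psi(\vec{e})=1$.

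For the inductive step, suppose $T$ has at least two vertices and choose a leaf $v$ of $T$, writing $e_0$ for the unique edge of $T$ at $v$, joining $v$ to its neighbour $v'$ in $T$, and $e_1,e_2$ for the other two edges of $\Omega$ incident to $v$, whose far endpoints lie outside $T$ because $v$ is a leaf. Set $T'=T\setminus\{v\}$, a strictly smaller subtree. I would then verify the bookkeeping identity
\[
\mathcal{C}(T)=\bigl(\mathcal{C}(T')\setminus\{\vec{e}_0\text{ directed into }v'\}\bigr)\;\sqcup\;\{\vec{e}_1\text{ directed into }v,\ \vec{e}_2\text{ directed into }v\}:
\]
an oriented edge with head in $T'$ lies in $\mathcal{C}(T)$ iff it lies in $\mathcal{C}(T')$ and its tail is not $v$, and the only edge with tail $v$ meeting this description is $e_0$ directed into $v'$; while an oriented edge with head $v$ lies in $\mathcal{C}(T)$ iff its tail lies outside $T$, which excludes $e_0$ directed into $v$ but retains $e_1$ and $e_2$ directed into $v$. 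Summing $\psi$ over $\mathcal{C}(T)$ and invoking the inductive hypothesis on $T'$, it then remains to check
\[
\psi(\vec{e}_1\!\to\!v)+\psi(\vec{e}_2\!\to\!v)-\psi(\vec{e}_0\!\to\!v')=0 .
\]
This follows by combining the two local identities at $v$: the vertex relation for the three oriented edges into $v$ gives $\psi(\vec{e}_0\!\to\!v)+\psi(\vec{e}_1\!\to\!v)+\psi(\vec{e}_2\!\to\!v)=1$, while the edge relation gives $\psi(\vec{e}_0\!\to\!v)=1-\psi(\vec{e}_0\!\to\!v')$; subtracting yields $\psi(\vec{e}_1\!\to\!v)+\psi(\vec{e}_2\!\to\!v)=\psi(\vec{e}_0\!\to\!v')$, which is exactly what is needed. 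This closes the induction.

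The argument is essentially pure bookkeeping, so there is no serious obstacle; the one point requiring care — and the step I would regard as the crux — is correctly describing how $\mathcal{C}(T)$ transforms when a leaf is deleted, namely that one loses the single oriented edge pointing from the leaf into the interior and gains the two oriented edges pointing into the removed leaf, and then recognising that these three $\psi$-values cancel after exactly one application each of the vertex relation and the edge relation. No analytic input, and no properties of the super $\lambda$-lengths beyond \eqref{eq:vertex} and \eqref{eq:edge}, are required.
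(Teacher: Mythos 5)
Your proof is correct and is exactly the induction the paper has in mind: Proposition~\ref{thm:finitesum} is stated in the paper with only the remark that it ``is easily confirmed inductively,'' and your leaf-removal argument, using one application each of the vertex and edge relations to show $\psi(\vec{e}_1\!\to\!v)+\psi(\vec{e}_2\!\to\!v)=\psi(\vec{e}_0\!\to\!v')$, supplies precisely that confirmation.
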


Our goal is to show that as the finite tree $T$ extends to the full infinite trivalent tree $T_\infty$, equation ~\eqref{eq:finsum} tends to the desired super McShane identity. The philosophical ``$C(T_\infty)$'' consists of $3\times 2^{\mathbb{N}}$ infinite ends of $T_\infty$, with countably many ``rational ends'' whose summands give rise to the summands in the super McShane identity, and uncountably many ``irrational ends'' whose summands deteriorate so quickly as to provide zero contribution in the limit \cite{labtan}. Much of the remainder of this paper is dedicated to translating this intuition into concrete analysis.

\subsection{Oriented trivalent tree}
\label{sec:orientedtree}

In \cite{Bpro}, orientations are assigned to the edges $\mathcal{E}(\Omega)$ of the dual complex $\Omega$ so that an edge points from $\mathbf{a}$ to $\mathbf{d}$ if any of the following equivalent conditions are satisfied
\begin{align}
[\epsilon(a)<\epsilon(d)]
\Leftrightarrow
[\tfrac{\epsilon(a)}{\epsilon(b)\epsilon(c)}<\tfrac{\epsilon(d)}{\epsilon(b)\epsilon(c)}]
\Leftrightarrow
[\tfrac{\epsilon(a)}{\epsilon(b)\epsilon(c)}<\tfrac{\epsilon(h)}{2}]
\Leftrightarrow
[\tfrac{\epsilon(d)}{\epsilon(b)\epsilon(c)}>\tfrac{\epsilon(h)}{2}],
\end{align}
where the last three conditions are equivalent thanks to the body of the edge relation~\eqref{eq:edge}. Edges which do not acquire an orientation satsify the equivalent conditions
\begin{align}
[\epsilon(a)=\epsilon(d)]
\Leftrightarrow
[\tfrac{\epsilon(a)}{\epsilon(b)\epsilon(c)}=
\tfrac{\epsilon(d)}{\epsilon(b)\epsilon(c)}=\tfrac{\epsilon(h)}{2}]\label{eq:nongeneric}
\end{align}
and are arbitrarily assigned an orientation. We call these \emph{flexible edges}.\medskip

The vertex condition tells us that
\begin{align}
\epsilon(\tfrac{a}{bc})+\epsilon(\tfrac{b}{ac})+\epsilon(\tfrac{c}{ab})=\epsilon(h),\label{eq:littlesum}
\end{align}
and this contradicts the possibility of having vertices with $2$ or more outgoing arrows as such a configuration would lead to the left hand side of \eqref{eq:littlesum} being strictly greater than $\tfrac{\epsilon(h)}{2}+\tfrac{\epsilon(h)}{2}=\epsilon(h)$. The restriction of only having \emph{sink} vertices (i.e., $3$ incoming edges) and \emph{fork} vertices (i.e., $2$ incoming edges) means that the resultant oriented trivalent tree either
\begin{enumerate}
\item
has one sink vertex, with every edge pointing towards it;
\item
only has fork vertices, with every edge pointing towards an ``ideal boundary'' point of the infinite trivalent tree.
\end{enumerate}
This latter scenario implies that there is a strictly decreasing sequence of (classical) $\lambda$-lengths. However, the non-discreteness of this sequence then contradicts
the discreteness of the length/trace spectrum of the underlying Fuchsian representation for our given super Fuchsian representation $\rho$ thanks to the fact\footnote{This is an elementary exercise in hyperbolic trigonometry, e.g.: \cite[Note 3.4]{huang_thesis}.} that
\begin{align}
\epsilon(ah)
=\epsilon(\mathrm{tr}(\rho(\gamma_a))).\label{eq:lambdavstrace}
\end{align}
In short, only scenario $(1)$ arises.

\begin{remark}\label{rmk:nongeneric}
Super Markoff maps which admit ``flexible" edges satisfying the (real) codimension $1$ algebraic conditions \eqref{eq:nongeneric}  are hence non-generic. Furthermore,
the impossibility of allowing vertices with $2$ outgoing arrows implies that
\begin{itemize}
\item
we cannot have two adjacent flexible edges;
\item
all edges immediately adjacent to a flexible edge $e$ must point toward it, and hence
\item
there is at most one flexible edge, as the shortest path between two disconnected flexible edges necessarily contains a vertex with $2$ outgoing arrows, and hence
\item
super Markoff maps admit at most one flexible edge, and the sink vertex necessarily bounds it.
\end{itemize}
\end{remark}

In fact, super Markoff maps which have flexible edges are precisely those which arise from decorated super Fuchsian representations whose underlying (body) Fuchsian representation lies on the edges of the Farey triangulation $\tau_*$ of $T(F)$.

\subsection{Bounded trace regions}
\label{sec:boundedtraceregions}

Given $\phi$ derived from a decorated marked once-punctured super torus, for every $m\geq0$, define the following collections of complementary regions
\begin{align}
\Omega_\phi(m):=
\left\{
\mathbf{a}\in \mathcal{F}(\Omega)\mid 
\epsilon(ah)=\epsilon(a)\epsilon(h)\leq m
\right\}.
\end{align}

To better understand the shapes of these sets, we utilize the following weak corollary of \cite[Theorem~1]{Bmar}:

\begin{proposition}\label{thm:bow}
If $\phi$ is a super Markoff map derived from a decorated marked once-punctured super torus, then
\begin{enumerate}
\item
 $\Omega_\phi(3)$ is non-empty, and
\item
(the closure of) the collection of complementary regions in $\Omega_\phi(m)$ is the connected union of finitely many (closures of) complementary regions. 
\end{enumerate}
\end{proposition}

We provide a proof here for completeness and emphasize that the following argument is essentially repeated from \cite{Bmar} and only applies to a specialized subcase of the treatment there.

\begin{proof}
Recall that \eqref{eq:lambdavstrace} relates the classical $\lambda$-lengths $\epsilon(a)$ of complementary regions $\mathbf{a}$ to the lengths of simple closed geodesics $\gamma_a$ of the underlying Fuchsian representation for a given super Markoff map. This ensures the discreteness of the $\{\epsilon(a)\}$ spectrum and hence the finiteness of the number of complementary regions in each $\Omega_\phi(m)$. Connectedness follows from the fact that the edges of $\Omega$ adjacent to (but not contained in) any component $\Omega_1,\ldots,\Omega_k$ of $\Omega_\phi(m)$ must point into $\Omega_i$. This then forces the shortest path between distinct components $\Omega_i$ and $\Omega_j$ to contain a vertex with $2$ outgoing edges, and this is impossible as it contradicts the vertex condition.\medskip

Finally, the fact that $\Omega_\phi(3)$ is non-empty is equivalent to the fact that the maximum of the systole function on the moduli space of once-punctured hyperbolic tori is equal to $2\,\mathrm{arcosh}(\frac{3}{2})$. Consider the complementary regions $\mathbf{a},\mathbf{b},\mathbf{c}$ surrounding the sink vertex and take without loss of generality
\[
0<x:=\tfrac{\epsilon(ah)}{\epsilon(bh)\epsilon(ch)}
\leq
y:=\tfrac{\epsilon(bh)}{\epsilon(ah)\epsilon(ch)}
\leq
z:=\tfrac{\epsilon(ch)}{\epsilon(ah)\epsilon(bh)}
\leq
\tfrac{1}{2}.
\]
If $yz<\frac{1}{9}$, then we obtain the following contradiction:
\[
1=x+y+z\leq 2y+z <\tfrac{2}{9z}+z\leq \tfrac{4}{9}+\tfrac{1}{2}<1.
\]
Therefore, $3\geq\frac{1}{\sqrt{yz}}=\epsilon(ah)$, and $\Omega_\phi(3)\neq\varnothing$ as claimed.
\end{proof}

\subsection{The asymptotics of neighboring domains}
\label{sec:asymptotics}

Given an arbitrary complementary region $\mathbf{a}\in F(\Omega)$ consider the following two sequences of complementary regions
\begin{itemize}
\item
the sequence $\{\mathbf{b}_i\}_{i\in\mathbb{Z}}$ of complementary regions adjacent to $\mathbf{a}$, sequentially indexed around $\mathbf{a}$ (see Figure~\ref{fig:spiral}), and 
\item
the sequence $\{\mathbf{c}_i\}_{i\in\mathbb{Z}}$ of complementary regions which are precisely one edge away from $\mathbf{a}$, indexed so that $\mathbf{c}_i$ is wedged in between $\mathbf{b}_i$ and $\mathbf{b}_{i+1}$ (see Figure~\ref{fig:spiral}).
\end{itemize}

\begin{figure}[h!]
\includegraphics[scale=0.8]{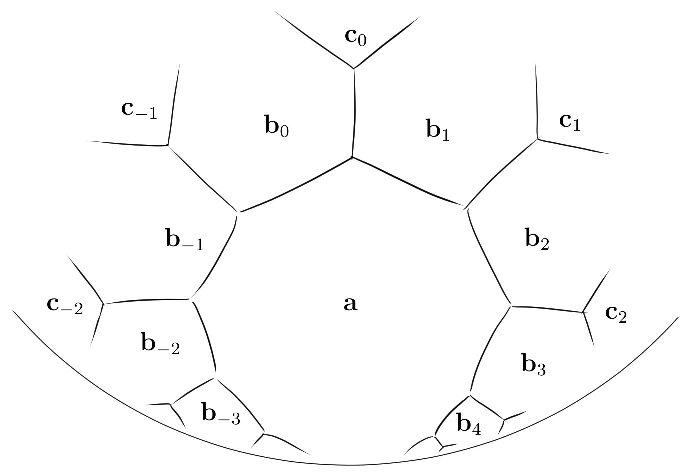}
\caption{Complementary regions surrounding a complementary region $\mathbf{a}$.}
\label{fig:spiral}
\end{figure}

We have in fact already seen the sequence $\{\mathbf{b}_i\}$ in $\S$~\ref{sec:split} as the sequence of complementary regions corresponding to diagonals with super $\lambda$-lengths $\{b_i\}$. In keeping with this notation, we define the sequence $\{c_i=\phi(\mathbf{c}_i)\}$ of super $\lambda$-lengths associated to the arcs $\mathbf{c}_i$.

\begin{lemma}\label{thm:spiralbound}
The asymptotic growth rate of $\{b_i\}$ and $\{c_i\}$ respectively satisfy
\[
\|s_{2k}(b_i)\|=O(|i|^kR^{|i|})
\text{ and }
\|s_{2k}(c_i)\|=O(|i|^{2k}R^{2|i|})
\] as $|i|$ approaches infinity, where $R=\epsilon(r)>1$ is the body of the $r$ given in equation~\eqref{eq:rvalue}.
\end{lemma}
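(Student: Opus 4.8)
The starting point is the explicit solution \eqref{eq:recursol}, which expresses $b_i$ as $x r^i + y r^{-i} + \tfrac{aW_{b_{i-1}}}{ah\mp 2}$ with $\epsilon(x),\epsilon(y)>0$. The plan is to extract from this formula the Grassmann-degree-$2k$ component $s_{2k}(b_i)$ and estimate its norm by tracking how the soul contributions propagate. First I would note that $a$, $h$, $r$, $x$, $y$ are not real numbers but even elements of the Grassmann algebra $\mathbb{R}_{S[N]}$, each of the form (body) $+$ (soul), where the soul is nilpotent; in particular $r = R(1 + \text{soul})$ with $R=\epsilon(r)>1$ (recall $ah-W_a = r+r^{-1}$ and the body Fuchsian representation is discrete, so $R>1$ strictly on a once-punctured torus). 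Writing $r^i = R^i \cdot \exp(i \cdot \log(1+\text{soul}_r))$ and expanding, the degree-$2k$ part of $r^i$ involves polynomials in $i$ of degree at most $k$ (since each factor of $i$ must be accompanied by a nilpotent soul piece, and souls of degree $\geq 1$ can appear at most $k$ times before exceeding degree $2k$ — here one uses that souls of even elements live in even degree, so degree-$2$ minimum per factor). This already gives $\|s_{2k}(x r^i)\| = O(|i|^k R^{|i|})$ and likewise for $y r^{-i}$ with $R^{-|i|}$, and the particular-solution term is $O(1)$, yielding the bound for $\{b_i\}$.

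**The $c_i$ estimate.** For $\{c_i\}$ the idea is to express $c_i$ in terms of the $b$'s via the local relations of the super Markoff map. Since $\mathbf{c}_i$ is one edge away from $\mathbf{a}$, wedged between $\mathbf{b}_i$ and $\mathbf{b}_{i+1}$, the super Ptolemy / vertex relation \eqref{eq:vertex} applied at the appropriate vertex expresses $c_i$ (up to dividing by a fixed nonzero even element like $a$ or $b_i$) as a polynomial expression in $a$, $b_i$, $b_{i+1}$, $h$ and the various $W$'s — schematically $c_i \sim \tfrac{1}{a}(\text{quadratic in } b_i, b_{i+1})$ using $b_i b_{i+2} = a^2 + b_{i+1}^2 + a b_{i+1} W_{\cdot}$ or the edge relation. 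The product $b_i b_{i+1}$ has leading behavior $R^{|2i|}$-ish, and more precisely the degree-$2k$ part of a product $b_i b_{i+1}$ is a sum over $k_1+k_2=k$ of $s_{2k_1}(b_i) s_{2k_2}(b_{i+1})$, each term $O(|i|^{k_1}R^{|i|}\cdot |i|^{k_2}R^{|i|}) = O(|i|^{k}R^{2|i|})$. Allowing the extra polynomial factors of $i$ that can come from cross terms when expanding $r^i r^{i+1}$ versus $r^i r^{-(i+1)}$ pieces, the worst case is $O(|i|^{2k}R^{2|i|})$, which is the claimed bound. I would carry this out by: (1) establishing a clean lemma on the degree-$2k$ component and norm of $r^{\pm i}$; (2) propagating through the closed form for $b_i$; (3) expressing $c_i$ algebraically in $a, b_i, b_{i+1}, h$ and souls; (4) multiplying the bounds.

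**Anticipated main obstacle.** The genuine difficulty is controlling the polynomial-in-$i$ growth of the soul parts with the right exponent, i.e.\ showing $k$ (not something larger) for $b_i$ and $2k$ for $c_i$. This requires being careful that: (a) $x$ and $y$ themselves carry souls, so $s_{2k}(xr^i)$ is a convolution $\sum_{j} s_{2j}(x)\, s_{2k-2j}(r^i)$ and one must bound each $s_{2m}(r^i)$ by $C_m |i|^{m} R^{|i|}$ uniformly; (b) the expansion of $r^i = R^i(1+\eta)^i$ with $\eta$ nilpotent even has $(1+\eta)^i = \sum_{j\geq 0}\binom{i}{j}\eta^j$, and $\binom{i}{j}$ is a degree-$j$ polynomial in $i$ while $\eta^j$ has Grassmann degree $\geq 2j$, so only $j\leq k$ contributes to $s_{2k}$, pinning the polynomial degree at $k$ — this combinatorial bookkeeping, together with uniformity of the implied constants over the finitely many relevant complementary regions (guaranteed by Theorem~\ref{thm:bow}), is the crux. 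The $r^{-i}$ branch and the sign-oscillation of $W_{b_i}$ are handled as in $\S$\ref{sec:split}: the oscillation only toggles $ah\mp 2$ in the bounded particular-solution term and does not affect the exponential/polynomial orders. Once the $r^{\pm i}$ lemma is in hand, everything else is substitution and the triangle inequality.
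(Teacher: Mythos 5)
Your proposal is correct and follows essentially the same route as the paper, which proves the lemma by direct computation from the closed form \eqref{eq:recursol}, expanding $r^{\pm i}=R^{\pm i}(1+\eta)^{\pm i}$ with nilpotent even soul $\eta$ so that only $j\le k$ binomial terms survive in degree $2k$ (giving the $|i|^k$ factor), and then obtaining the $c_i$ bound from the Ptolemy/edge relation expressing $c_i$ through $b_i,b_{i+1}$. Your bookkeeping in fact yields the slightly sharper bound $O(|i|^{k}R^{2|i|})$ for $c_i$, which of course implies the stated $O(|i|^{2k}R^{2|i|})$.
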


\begin{proof}
This is verified by direct computation from equation~\eqref{eq:recursol}. We first note that the right-most term in \eqref{eq:recursol} oscillates among finitely many different values and effectively behaves as a constant term. In particular, its contribution becomes (comparatively) negligible as $|i|\to\infty$ and is thus ignored. To understand the behavior of $\|s_{2k}(b_i)\|$, we first express $r$ as $s_0(r)+s_2(r)+\ldots+s_{2\lfloor\frac{N}{2}\rfloor}(r)$, where $N$ is the number of variables used to generate the Grassmann algebra $\mathbb{R}_{S[N]}$. Then, $s_{2k}(r^i)$ is a polynomial in $\{s_{2j}(r)\}_{j=0,1,\ldots,k}$. As an example:
\begin{align*}
s_6(r^i)&=
{i\choose 1}s_0(r)^{i-1}s_6(r)
+
{i\choose 2}s_0(r)^{i-2}s_2(r)s_4(r)
+
{i\choose 3}s_0(r)^{i-3}s_2(r)^3\notag\\
&=
iR^{i-1}s_6(r)
+
\tfrac{i(i-1)}{2}R^{i-2}s_2(r)s_4(r)
+
\tfrac{i(i-1)(i-2)}{6}R^{i-3}s_2(r)^3.
\end{align*}
We make two observations:
\begin{itemize}
\item
the number of monomials is bounded above by the number of ordered partitions of $k$ (and hence independent of $i$); 
\item
the $N+1$ nilpotence of soul terms in $\mathbb{R}_{S[N]}$ means that most of the monomials will be $0$. That is to say, only the factor of $s_0(r)=R$ in each monomial is permitted to arbitrarily increase in its exponent as $i\to\infty$.
\end{itemize}
As an example, we see from the previous example computation that $\|s_6(r^i)\|$ is dominated by the $\tfrac{i(i-1)(i-2)}{6}R^{i-3}s_2(r)^3$ term as $i\to\infty$, and hence $\|s_6(r^i)\|=O(i^3R^i)$ as $i\to\infty$. More generally speaking, $\|s_{2k}(r^i)\|$ is dominated by the ${i\choose k}R^{i-k} s_2(r)^k$ monomial as $i\to\infty$, and hence  $\|s_{2k}(r^i)\|=O(i^kR^i)$ as $i\to\infty$. In contrast, as $i\to-\infty$, the two bulleted observations above tell us that $\|s_{2k}(r^i)\|$ tends to $0$. Instead, we see that 
\[
\|s_{2k}(r^{-i})\|=O(|i|^k(R^{-1})^{-i})=O(|i|^kR^{i}),\text{ as }i\to-\infty.
\]
Pooling these conclusions with \eqref{eq:recursol}, we see that $\|s_{2k}(b_i)\|=O(|i|^kR^{|i|})$. The asymptotic behavior for $\|s_{2k}(c_i)\|$ follows, as
\[
c_i=a^{-1}(b_i^2+b_{i+1}^2+b_ib_{i+1}W_{c_i})=O(|i+1|^{2k}R^{2|i+1|})=O(|i|^{2k}R^{2|i|}).
\]

\end{proof}

\subsection{Body--soul comparison}
\label{sec:comparison}

The aim of this subsection is to establish the following 

\begin{theorem}[Body--soul comparison]
\label{thm:comparison}
For every $\delta>0$, there exists a constant $M=M_{\delta,\phi}\geq1$ such that
\[
\|s(a)\|\leq M\cdot|\epsilon(a)|^{1+\delta}
\]
for every super $\lambda$-length $a$.
\end{theorem}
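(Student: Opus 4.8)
The strategy is to bound the soul (nilpotent) part $\|s(a)\|$ of each super $\lambda$-length by its body $|\epsilon(a)|$ raised to a power only slightly larger than one, uniformly over all complementary regions. The natural organizing device is the oriented trivalent tree from $\S$\ref{sec:orientedtree}, which by Remark~\ref{rmk:nongeneric} has a unique sink vertex and every edge points toward it. So every complementary region $\mathbf{a}$ can be reached from the sink by a finite directed path of edges traversed backward, and along such a path the $\lambda$-lengths are (classically) strictly increasing: $\epsilon(a)$ grows as we move away from the sink. The plan is to run an induction on this tree, starting from the finitely many regions near the sink — in particular those in $\Omega_\phi(3)$, which Theorem~\ref{thm:bow} guarantees is non-empty and finite — and propagating the estimate outward along forks.

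First I would set up the inductive step. At a fork vertex, two incoming regions $\mathbf{b},\mathbf{c}$ (with bodies already controlled by the inductive hypothesis) and the outgoing region $\mathbf{a}$ satisfy the edge relation \eqref{eq:edge}, $a = hbc - d - (bW_c+cW_b)$, where $\mathbf{d}$ is the region across the edge from $\mathbf{a}$, also already in the induction (it is closer to the sink). Taking bodies gives $\epsilon(a)=\epsilon(h)\epsilon(b)\epsilon(c)-\epsilon(d)$, and since the edge points toward the sink we have $\epsilon(a) > \max(\epsilon(b),\epsilon(c))$ and in fact $\epsilon(a)\geq \tfrac{\epsilon(h)}{2}\epsilon(b)\epsilon(c)$ — roughly a multiplicative step. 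Taking soul parts of \eqref{eq:edge}, $s(a)$ is a sum of products $s(b)\epsilon(c)$, $\epsilon(b)s(c)$, $s(b)s(c)$, the $W$-terms (which are themselves soul, being products of two odd variables), and $s(d)$; estimating with the inductive hypothesis $\|s(b)\|\le M|\epsilon(b)|^{1+\delta}$, $\|s(c)\|\le M|\epsilon(c)|^{1+\delta}$, $\|s(d)\|\le M|\epsilon(d)|^{1+\delta}$, one finds $\|s(a)\|$ is bounded by a constant times $|\epsilon(b)\epsilon(c)|^{1+\delta}\le (\tfrac{2}{\epsilon(h)}\epsilon(a))^{1+\delta}$ plus the $W$-contribution. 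The point is that because $\epsilon(a)$ is essentially the product $\epsilon(b)\epsilon(c)$ up to the bounded factor $\epsilon(h)/2$, the exponent $1+\delta$ is \emph{stable} under this step provided we allow the constant $M$ to absorb powers of $2/\epsilon(h)$ — and crucially $\epsilon(h)$ is bounded below on the nose for a fixed representation.

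The subtlety — and the main obstacle — is that the recursion as written is not a single multiplicative step but interacts with the whole spiral of regions $\{\mathbf{b}_i\},\{\mathbf{c}_i\}$ around each $\mathbf{a}$ studied in Lemma~\ref{thm:spiralbound}, whose soul parts grow only polynomially-times-exponentially in the spiral index while their bodies grow genuinely exponentially like $R^{|i|}$ with $R=\epsilon(r)>1$. One must check that this polynomial-vs-exponential discrepancy is crushed by any fixed $\delta>0$: since $\|s_{2k}(b_i)\|=O(|i|^kR^{|i|})$ while $|\epsilon(b_i)|^{1+\delta}$ grows like $R^{(1+\delta)|i|}$, and $R^{\delta|i|}$ eventually dominates $|i|^k$, the estimate holds for all but finitely many $i$ in each spiral, and the finitely many exceptional ones are absorbed into $M$. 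So the real work is (i) verifying the base case on the finite subtree where bodies are small — here one uses Theorem~\ref{thm:bow} to know this region is finite, and the nilpotency of the Grassmann algebra (finitely many odd generators, hence $\|s(\cdot)\|$ is automatically finite region-by-region) to get a crude bound — and (ii) checking uniformity of $M$ as the induction propagates, which reduces to the two facts just mentioned: $\epsilon(h)$ is bounded below, and $R^{\delta}>1$ beats any fixed polynomial. I expect (ii), specifically confirming that the constant does not degrade over infinitely many fork steps, to be the delicate point; this is where one invokes the explicit solution \eqref{eq:recursol} and Lemma~\ref{thm:spiralbound} rather than a naive term-by-term induction.
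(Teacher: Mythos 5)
Your overall scaffolding (induct outward along the oriented tree from a finite bounded-trace region, use the spiral asymptotics of Lemma~\ref{thm:spiralbound} near the base, exploit the multiplicative growth of bodies per fork) matches the paper's, but the inductive step as you formulate it has a genuine gap that the paper's proof is specifically engineered to avoid. With the single-constant hypothesis $\|s(x)\|\leq M\,\epsilon(x)^{1+\delta}$, the quadratic cross term in the soul of the edge/Ptolemy relation is bounded only by $\|s(b)\|\,\|s(c)\|\leq M^2\bigl(\epsilon(b)\epsilon(c)\bigr)^{1+\delta}\leq M^2\bigl(\tfrac{2}{\epsilon(h)}\bigr)^{1+\delta}\epsilon(a)^{1+\delta}$, whose ratio to the target $M\,\epsilon(a)^{1+\delta}$ is about $M\bigl(\tfrac{2}{\epsilon(h)}\bigr)^{1+\delta}$ --- independent of how far $\mathbf{a}$ is from the sink, and $\geq 1$ as soon as $M$ exceeds a fixed constant. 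So the induction does not close, and there is no position-dependent gain to rescue it; ``absorbing powers of $2/\epsilon(h)$ into $M$'' also fails for the linear terms, since the path from the sink is unboundedly long and a per-step multiplicative loss cannot be absorbed into a single constant. Your proposed remedy --- invoking \eqref{eq:recursol} and Lemma~\ref{thm:spiralbound} to control degradation along the tree --- misplaces where those enter: in the paper they only furnish the base case (the one-edge neighborhood of the bounded region), not the propagation.

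The paper resolves both problems by changing the inductive statement and the base region. The statement proved is the degree-graded bound of Lemma~\ref{thm:technicalcompare}, $\|s_{2k}(a)\|\leq M_k\,\epsilon(a)\,\epsilon(a^*)^{k\delta}$, where $a^*$ is the second-smallest neighboring classical $\lambda$-length: the bound is \emph{linear} in the big body $\epsilon(a)$, with all $\delta$-powers riding on the much smaller neighbor, so products like $s_{2j}(b)s_{2k-2j}(c)$ contribute only $\epsilon(b)\epsilon(c)\approx\epsilon(a)/\epsilon(h)$ times controlled $\delta$-powers, and the per-degree constants close under the convolution condition $M_k\geq\sum_j M_jM_{k-j}$ together with $M_k\geq\epsilon(h)M_{k-1}$ via a nested induction on $k$. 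Second, the filtration is based not at $\Omega_\phi(3)$ but at $\Omega_\phi(2\cdot 6^{1/\delta})$ (finite by Theorem~\ref{thm:bow}), chosen exactly so that outside it one has the uniform contraction $\epsilon(d^*)\leq 6^{-1/\delta}\epsilon(a^*)$, i.e., a fixed factor $6^{-1}$ per step after raising to the $\delta$-power, which beats the bounded number of terms (the paper's final estimate is of the form $\tfrac{9}{2}\cdot 6^{-k}+6^{-1}<1$) and keeps the $M_k$ unchanged along the entire infinite tree. If you want to repair your argument you must build in both devices --- the neighbor-based, degree-graded hypothesis and the $\delta$-dependent enlargement of the base region; with only $\|s(a)\|\leq M\epsilon(a)^{1+\delta}$ and base $\Omega_\phi(3)$ the step is false, not merely delicate.
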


\begin{remark}
The body--soul comparison as stated here is for finite-dimensional Grassmann algebras $\mathbb{R}_{S[N]}$ as $M$ is dependent on $N$. We note also that the infinite-dimensional context is more subtle, and there are various approaches in the literature (see, for example, \cite[Definition~3.1.4 and \S3.2]{Rbook}).
\end{remark}

\begin{definition}
Consider an arbitrary complementary region $\mathbf{a}\in \mathcal{F}(\Omega)$, and denote its corresponding super $\lambda$-length by $a=\phi(\mathbf{a})$. Let $\mathbf{a}^*$ denote any complementary region adjacent or equal to $\mathbf{a}$ with the second smallest ({\sl a priori} counted with multiplicity) classical $\lambda$-length with respect to $\epsilon(\phi)$ and define $\epsilon(a^*):=\epsilon(\phi(\mathbf{a}^*))$.
\end{definition}

\begin{lemma}
\label{thm:technicalcompare}
For every $\delta>0$, there are positive real numbers \\
$M_{\lfloor{N/2}\rfloor}\geq\ldots\geq M_1\geq M_0=1$ such that 
\begin{align}
\|s_{2k}(a)\|\leq M_{k} \cdot \epsilon(a) \cdot \epsilon(a^*)^{k\delta},\label{eq:bound}
\end{align}
for the $\lambda$-length $a=\phi(\mathbf{a})$ of any $\mathbf{a}\in \mathcal{F}(\Omega)$.
\end{lemma}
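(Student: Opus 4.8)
The plan is to prove the bound \eqref{eq:bound} by induction on $k$, using the edge relation and the spiral recursion developed in $\S$\ref{sec:split} as the engine. The base case $k=0$ is exactly the inequality $\|s_0(a)\| = |\epsilon(a)| \le 1 \cdot \epsilon(a) \cdot \epsilon(a^*)^0$, so $M_0=1$ works trivially. For the inductive step, I would fix $\mathbf{a}\in\mathcal{F}(\Omega)$ and run the analysis on the bi-infinite sequence $\{b_i\}$ of $\lambda$-lengths of the complementary regions adjacent to $\mathbf{a}$, together with the one-edge-removed sequence $\{c_i\}$, exactly as set up before Lemma~\ref{thm:spiralbound}. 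The key structural input is the explicit solution \eqref{eq:recursol}, $b_i = xr^i + yr^{-i} + \tfrac{aW_{b_{i-1}}}{ah\mp 2}$, whose body is governed by $R=\epsilon(r)$, and the fact that the body of the recursion coefficient $ah$ equals $\epsilon(\mathrm{tr}(\rho(\gamma_a)))$, which is bounded below away from $2$. Combining this with Lemma~\ref{thm:spiralbound}, the soul component $s_{2k}(b_i)$ grows like $O(|i|^k R^{|i|})$ and $s_{2k}(c_i)$ like $O(|i|^{2k}R^{2|i|})$.

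The heart of the argument is to leverage the sink-vertex structure from $\S$\ref{sec:orientedtree}: since the oriented trivalent tree has a unique sink, every complementary region $\mathbf{a}$ other than the three around the sink has a well-defined neighbor with strictly smaller classical $\lambda$-length, and iterating this gives a finite descending path down to $\Omega_\phi(3)$, which is non-empty by Theorem~\ref{thm:bow}. So I would argue by induction along this tree, with the root case being the finitely many regions in some bounded-trace region $\Omega_\phi(m_0)$, where $\|s_{2k}(a)\|$, $\epsilon(a)$ and $\epsilon(a^*)$ are all controlled by finitely many constants, allowing me to simply absorb them into a large enough choice of $M_k$. For the inductive step at a region $\mathbf{a}$ with neighbors including its ``second-smallest" partner $\mathbf{a}^*$, I would express $a$ via the edge relation $a + d + (bW_c + cW_b) = hbc$ (or the vertex relation) in terms of already-controlled regions closer to the sink, expand the Grassmann product $s_{2k}(a)$ using the multinomial/Leibniz expansion over the fermionic factors, and bound each resulting term using the inductive hypothesis for lower values of $k$ on the neighboring regions. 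The spiral bounds of Lemma~\ref{thm:spiralbound} handle the contribution of the infinitely many regions $\mathbf{b}_i$, $\mathbf{c}_i$ far out along the spiral around $\mathbf{a}$: their soul parts decay geometrically while the classical $\lambda$-length of $\mathbf{a}^*$ controls $R$, so the polynomial-in-$i$ factors are swallowed by an arbitrarily small power $\epsilon(a^*)^{\delta}$ at the cost of enlarging $M_k$.

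The main obstacle I anticipate is bookkeeping the interaction between the two ``directions" of induction — the induction on $k$ (the Grassmann degree of the soul) and the induction along the tree toward the sink — while keeping the constants $M_k$ uniform over \emph{all} regions $\mathbf{a}$ simultaneously. Concretely, when I expand $s_{2k}(a)$ via the edge relation, the terms involve products like $s_{2j}(b)\cdot s_{2(k-j)}(c)$ for neighbors $b,c$, and I must ensure the inductive bound on those terms references $\epsilon(b^*)$ and $\epsilon(c^*)$ in a way that can be re-expressed in terms of $\epsilon(a^*)$; this requires the monotonicity observation that moving toward the sink only decreases classical $\lambda$-lengths, so that $\epsilon(b^*), \epsilon(c^*) \le \epsilon(a^*)$ (up to the finitely many sink-adjacent exceptions). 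The delicate point is that the number of terms in the expansion is bounded (there are only $\lfloor N/2\rfloor$ fermionic pairs, so the soul is a finite-dimensional object), which is precisely why a finite chain of constants $M_{\lfloor N/2\rfloor}\ge\cdots\ge M_1\ge M_0=1$ suffices rather than a $k$-dependent blow-up; making this finiteness explicit and checking that the geometric decay from $R<\epsilon(h)\epsilon(a^*)/2$-type estimates genuinely dominates the polynomial factors is where the real work lies. Once Lemma~\ref{thm:technicalcompare} is in hand, Theorem~\ref{thm:comparison} follows immediately by summing the dyadic bounds \eqref{eq:bound} over $0\le k\le\lfloor N/2\rfloor$ and noting $\epsilon(a^*)\le\epsilon(a)$ together with $\epsilon(a)$ bounded below.
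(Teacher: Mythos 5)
Your overall architecture is the same as the paper's: a core of bounded-trace regions handled via the spiral estimates of Lemma~\ref{thm:spiralbound}, then an outward induction along the dual tree combined with a nested induction on the Grassmann degree $k$, expanding a Ptolemy/edge relation and feeding in the hypothesis \eqref{eq:bound} for the neighboring regions. The base-case part of your plan is essentially correct. But there is a genuine gap in the inductive step: the mechanism that keeps the constants $M_k$ \emph{uniform} along the infinite tree. You propose to close the induction using the monotonicity $\epsilon(b^*),\epsilon(c^*)\leq\epsilon(a^*)$ toward the sink, with the core taken to be ``some'' $\Omega_\phi(m_0)$ (and descent ``down to $\Omega_\phi(3)$''). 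That is not enough. When you expand $s_{2k}(a)$ via the relation at the new edge, you get several terms, each bounded by (products of) $M_j$'s times bodies of neighboring regions; with only $\epsilon(b^*),\epsilon(c^*),\epsilon(d^*)\leq\epsilon(a^*)$ the resulting coefficient is a fixed multiple ($>1$) of $M_k\,\epsilon(a)\,\epsilon(a^*)^{k\delta}$, so each outward step would force $M_k$ to grow by a definite factor, and over the infinite tree the constants blow up. The finiteness of $N$ caps the number of degrees $k$, not the number of tree steps, so it cannot rescue this.

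The missing idea in the paper is quantitative and $\delta$-dependent: the core is taken to be $\Omega_\phi(2\cdot 6^{1/\delta})$, so that for regions $\mathbf{c},\mathbf{d}$ outside it one has $\tfrac{\epsilon(h)}{2}\min\{\epsilon(c),\epsilon(d)\}>6^{1/\delta}$, which combined with the sink-vertex inequality $\tfrac{\epsilon(b)}{\epsilon(c)\epsilon(d)}\leq\tfrac{\epsilon(h)}{2}$ and $\epsilon(a^*)=\max\{\epsilon(b),\epsilon(c)\}$ gives the contraction
\[
\epsilon(d^*)\;\leq\;\max\{\epsilon(c),\epsilon(d)\}\;\leq\;6^{-\frac{1}{\delta}}\,\epsilon(a^*)
\]
as in \eqref{eq:ineq}. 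Raised to the powers $k\delta$ and $\delta$, this produces factors $6^{-k}$ and $6^{-1}$ that suppress the accumulated combinatorial constants (together with the auxiliary requirements $M_k\geq\epsilon(h)M_{k-1}$ and $M_k\geq\sum_{j=1}^{k-1}M_jM_{k-j}$, which you also need to absorb the convolution terms), so the total coefficient stays strictly below $M_k\,\epsilon(a)\,\epsilon(a^*)^{k\delta}$ and the same constants propagate to every level of the tree. Without tying the size of the core to $\delta$ in this way, the step you flag as ``where the real work lies'' does in fact fail, so your proposal as written does not yet prove the lemma; incorporating the $\delta$-dependent threshold (and, as a minor point, working with the Ptolemy relation $ad=b^2+c^2+bcW_a$ divided by $\epsilon(d)$, rather than the linear edge relation, which makes the bookkeeping match the inequalities above) brings it in line with the paper's argument.
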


\begin{proof}
Fix an arbitrary $\delta>0$, and choose any positive real number $Y$ large enough so that $Y^{\frac{1}{\delta}}\geq \tfrac{3}{2}$. We consider the following collection of subsets of $\mathcal{F}(\Omega)$:
\begin{align*}
\mathcal{F}_j(\Omega):=
\left\{
\mathbf{x}\in \mathcal{F}(\Omega) \mid
\mathbf{x}\text{ is at most $j$ edges away from }
\Omega_\phi(2\cdot Y^{\frac{1}{\delta}})
\right\}.
\end{align*}
Note that by Proposition~\ref{thm:bow}, $\Omega_\phi(2\cdot Y^{\frac{1}{\delta}})$ is always non-empty and hence $\mathcal{F}_j(\Omega)$ is always well-defined. The $\{\mathcal{F}_j(\Omega)\}$ give a filtration of $\mathcal{F}(\Omega)$, and we prove Lemma~\ref{thm:technicalcompare} by showing that equation~\eqref{eq:bound} holds for every $\mathcal{F}_j(\Omega)$. In particular, we do this by induction with respect to $j$.\medskip

\textbf{Base case $(j=1)$}: our aim is establish the base case tautologically by choosing $M_k$ so that every arc corresponding to the complementary regions in $\mathcal{F}_0(\Omega)=\Omega_{\phi}(2\cdot Y^{\frac{1}{\delta}})$ satisfies equation~\eqref{eq:bound}. By definition,  $\mathcal{F}_1(\Omega)$ is equal to the union of all the complementary regions within one edge of the complementary regions constituting $\Omega_\phi(2\cdot Y^{\frac{1}{\delta}})$. Since $\Omega_\phi(2\cdot Y^{\frac{1}{\delta}})$ consists of finitely many cells, it suffices to construct $M_k$ for the one-edge neighborhood of an arbitrary complementary region $\mathbf{a}$ consisting of $\{\mathbf{a},\mathbf{b}_i,\mathbf{c}_i\}_{i\in\mathbb{Z}}$. We also impose, for our later convenience, the condition that $M_k\geq \epsilon(h) M_{k-1}$ and 
\begin{align}
M_k\geq M_1\cdot M_{k-1}+M_2\cdot M_{k-2}+\ldots+ M_{k-1}\cdot M_1.
\end{align}
for $k=1,\ldots,\lfloor{N/2}\rfloor$.\medskip

We know from Lemma~\ref{thm:spiralbound} that $||s_{2k}(b_i)||=O(|i|^kR^{|i|})$ as $|i|$ tends to infinity and is asymptotically dominated by $R^{|i|(1+k\delta)}$ since $R>1$. Similarly, we know that $s_{2k}(c_i)=O(|i|^{2k}R^{2|i|})$ as $|i|$ tends to infinity and is asymptotically dominated by $R^{2|i|(1+k\delta)}$. This suffices to let us construct appropriate $M$-terms.\medskip

\textbf{Induction step}: consider an arbitrary complementary region $\textbf{a}\in \mathcal{F}_{j+1}(\Omega)-\mathcal{F}_j(\Omega)$ configured so that the edge between $\mathbf{a}$ and $\mathcal{F}_j(\Omega)$ borders $\mathbf{b}$ and $\mathbf{c}$. We assume without loss of generality that $\mathbf{c}$ is strictly closer to $\Omega_\phi(2\cdot Y^{\frac{1}{\delta}})$ than $\mathbf{b}$ (see figure~\ref{fig:proof}). Using Remark~\ref{rmk:nongeneric}, we derive the following lemmas.

\begin{lemma}
We have $\frac{\epsilon(b)}{\epsilon(c)\epsilon(d)}\geq\frac{\epsilon(h)}{2}$. 
\end{lemma}
\begin{proof}
Proposition~\ref{thm:bow} tells us that the sink vertex $v_0$ must lie in $\Omega(2\cdot Y^{\frac{1}{\delta}})$, and hence $\mathbf{c}$ is strictly closer to $v_0$ than $\mathbf{b}$ without either region being adjacent to $v_0$. This in turn implies that the edge shared by $\mathbf{c}$ and $\mathbf{d}$ points away from $\mathbf{b}$, thus giving the desired inequality. 
\end{proof}
\begin{lemma}
We have $\epsilon(a^*)=\max\{\epsilon(b),\epsilon(c)\}$. 
\end{lemma}
\begin{proof}The vertex $v_\mathbf{a}$ shared by $\mathbf{a},\mathbf{b},\mathbf{c}$ is the nearest vertex from $\mathbf{a}$ to the sink vertex $v_0$, and hence all of the edges bounding $\mathbf{a}$ point toward $v_\mathbf{a}$. Denoting the sequence of complementary regions encircling $\mathbf{a}$ by  $\{\mathbf{b}_i\}$ with $\mathbf{b}_1=\mathbf{b}$ and $\mathbf{b}_0=\mathbf{c}$, it follows that $\epsilon(b)$ is the minimum among all $\{\epsilon(b_{2i+1})\}$ with odd subscripts and that $\epsilon(c)$ is the minimum among all $\{\epsilon(b_{2i})\}$ with even subscripts. Since $\epsilon(c)\leq\epsilon(b)$, this means that $\epsilon(c)$ is the minimum among the sequence $\{\epsilon(b_i)\}$. In fact, it is easy to see from the algebraic structure of the body part of \eqref{eq:recursol} that the second smallest $\epsilon(b_i)$ must be in $\{\epsilon(b_{1}),\epsilon(b_{-1})\} $ and hence must be $\epsilon(b)\leq\epsilon(b_{-1})$.
\end{proof}

\begin{lemma}
We have 
$$\max\{\epsilon(b),\epsilon(c)\}\geq\epsilon(d^*) ~{\rm and }~\max\{\epsilon(c),\epsilon(d)\}\geq \epsilon(b^*).$$
\end{lemma}
\begin{proof}
These two inequalities hold because $\mathbf{b},\mathbf{c},\mathbf{d}$ all meet at a common vertex.
\end{proof}

\begin{figure}[h!]
\includegraphics[scale=1]{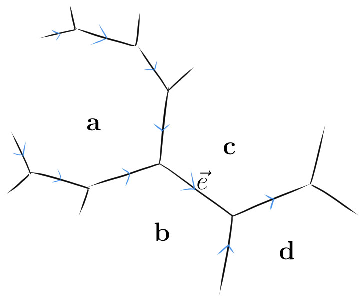}
\caption{The directed edge $\vec{e}$ borders $\mathbf{b},\mathbf{c}$ and points from $\mathbf{a}$ to $\mathbf{d}$.}
\label{fig:proof}
\end{figure}

Proceeding by induction on $j$, we now make the assumption that equation~\eqref{eq:bound} is satisfied by every element of $\mathcal{F}_j(\Omega)$. Consider the Ptolemy relation $ad=b^2+c^2+bcW_a$ restricted to degree $2k$ terms:
\begin{align*}
s_{2k}(a)s_0(d)
&=\sum_{j=0}^{k}\left(s_{2j}(b)s_{2k-2j}(b)+s_{2j}(c)s_{2k-2j}(c)\right)\\
&+W_a\sum_{j=0}^{k-1} s_{2j}(b)s_{2k-2-2j}(c)
-\sum_{j=0}^{k-1}s_{2j}(a)s_{2k-2j}(d).
\end{align*}

We now sequentially verify equation~\eqref{eq:bound} for $k=0,\ldots, [N/2]$ in what is effectively a nested induction. First observe that \eqref{eq:bound} is trivially true for $k=0$. For $k\geq 1$, by invoking the following: 
\begin{itemize}
\item
the $\mathcal{F}_j(\Omega)$ induction assumption on $b,c,d$ and
\item
the (nested) induction assumption that \eqref{eq:bound} holds for lower degree terms in $a\in\mathbb{R}_{s[N]}$,
\end{itemize}
we obtain
\begin{align*}
\epsilon(d)\|s_{2k}(a)\|
\leq~&(\epsilon(b)^2\epsilon(b^*)^{k\delta}+\epsilon(c)^2\epsilon(c^*)^{k\delta})
\sum_{j=0}^k M_j\cdot M_{k-j}\\
&+\sum_{j=0}^{k-1}M_j\cdot M_{k-1-j}
\cdot
\epsilon(b)\epsilon(c)\epsilon(b^*)^{j\delta}\epsilon(c^*)^{(k-1-j)\delta}\\
&+\sum_{j=0}^{k-1}M_{j}\cdot M_{k-j}\cdot
\epsilon(a)\epsilon(d)\epsilon(a^*)^{j\delta}\epsilon(d^*)^{(k-j)\delta}\\
\leq~&3M_k\cdot (\epsilon(b)^2+\epsilon(c)^2)
\cdot(\max\{\epsilon(c),\epsilon(d)\})^{k\delta}\\
&+3M_{k-1}\cdot\tfrac{1}{2}(\epsilon(b)^2+\epsilon(c)^2)
\cdot (\max\{\epsilon(c),\epsilon(d)\})^{(k-1)\delta}\\
&+2M_k \cdot \epsilon(a)\epsilon(d)\epsilon(a^*)^{(k-1)\delta}\epsilon(d^*)^\delta\\
\leq~&\epsilon(a)\epsilon(d)
\left(
3M_k\cdot\max\{\epsilon(c),\epsilon(d)\}^{k\delta}\right.\\
&\left.+\tfrac{3}{2}M_{k-1}\cdot \max\{\epsilon(c),\epsilon(d)\}^{(k-1)\delta}
+2M_k \cdot \epsilon(a^*)^{(k-1)\delta}\epsilon(d^*)^\delta\right).
\end{align*}

The classical Fuchsian case asserts that $\epsilon(h)\epsilon(\max\{\epsilon(c),\epsilon(d)\})>1$ and hence
\[
\tfrac{3}{2}M_{k-1}\cdot \max\{\epsilon(c),\epsilon(d)\}^{(k-1)\delta}<\tfrac{3}{2}\epsilon(h)^{\delta}M_{k-1}\cdot \max\{\epsilon(c),\epsilon(d)\}^{k\delta}
\]
Moreover, since $M_k\geq \epsilon(h) M_{k-1}$, we have 
\begin{align}
\|s_{2k}(a)\|
\leq M_k\cdot\epsilon(a)
\left(
(3+\tfrac{3}{2}\epsilon(h)^{\delta-1})(\max\{\epsilon(c),\epsilon(d)\})^{k\delta}\right.&\notag\\
\left.
+2\epsilon(a^*)^{(k-1)\delta}\epsilon(d^*)^\delta
\right).&\label{eq:intermediate}
\end{align}
We also have
\begin{align}
\epsilon(d^*)
&\leq\max\{\epsilon(c),\epsilon(d)\}\notag\\
&=\frac{\epsilon(c)\epsilon(d)}{\min\{\epsilon(c),\epsilon(d)\}}
=\frac{\epsilon(b)}
{\frac{\epsilon(b)}{\epsilon(c)\epsilon(d)}\cdot \min\{\epsilon(c),\epsilon(d)\}}\notag\\
&\leq\frac{\epsilon(a^*)}
{\frac{\epsilon(h)}{2}\cdot \min\{\epsilon(c),\epsilon(d)\}}\leq Y^{-\frac{1}{\delta}}\cdot\epsilon(a^*),\label{eq:ineq}
\end{align}
where the last inequality uses the fact that $\mathbf{c},\mathbf{d}\notin\Omega_\phi(2\cdot Y^{\frac{1}{\delta}})$. Substituting \eqref{eq:ineq} into \eqref{eq:intermediate} then yields
\begin{align*}
\|s_{2k}(a)\|
\leq M_k\cdot\epsilon(a)\cdot \epsilon(a^*)^{k\delta}((3+\tfrac{3}{2}\epsilon(h)^{\delta-1})\cdot Y^{-k}+2\cdot Y^{-1}).
\end{align*}
The only condition we have hitherto imposed on $Y$ is that $Y^{\frac{1}{\delta}}\geq\tfrac{3}{2}$. By further setting $Y$ large enough so that $(3+\tfrac{3}{2}\epsilon(h)^{\delta-1})\cdot Y^{-k}+2\cdot Y^{-1}<1$, we see that
\[
\|s_{2k}(a)\|
<M_k\cdot\epsilon(a)\cdot\epsilon(a^*)^{k\delta},
\]
as desired.
\end{proof}

Theorem~\ref{thm:comparison} is a simple consequence of Lemma~\ref{thm:technicalcompare}:

\begin{proof}[Proof of theorem~\ref{thm:comparison}]
Take $M=\max\{M_0,\ldots, M_{\lfloor\frac{N}{2}\rfloor}\}$. Observe that for any complementary region $\mathbf{a}\in F(\Omega)- \Omega_\phi(2\cdot Y^{\frac{1}{\delta}})$, we have $\epsilon(a)\geq\epsilon(a^*)$, and the desired comparison holds. There are finitely many remaining cases, and we increase $M$ to encompass these if necessary.
\end{proof}

To conclude this section, we apply the Body-soul Comparison Theorem \ref{thm:comparison} to derive the asymptotic growth rate of the simple super $\lambda$-length spectrum.

\begin{corollary}[Asymptotic growth rate]
Given a monodromy representation $\rho$ for a marked once-punctured super torus $[\rho]\in ST(F)$, we define
\begin{align}
N_\rho(L):=
\#\left\{
\mathbf{a}\in F(\Omega):~
\log\|a\|<L
\right\}.
\end{align}
Then the following limit
\[
\eta(\rho):=
\lim_{L\to\infty}
\frac{N_\rho(L)}{L^2}
\]
exists and varies properly and continuously with respect to the underlying Fuchsian representation $\epsilon(\rho)$ for $\rho$.
\end{corollary}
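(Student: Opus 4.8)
The plan is to prove that the limit $\eta(\rho)$ exists and in fact equals the classical simple-geodesic frequency of the underlying Fuchsian structure $\epsilon(\rho)$, so that continuity and properness are inherited verbatim from the classical statement. Everything rests on squeezing $\log\|a\|$ between two nearby multiples of $\log\epsilon(a)$ by means of the Body--soul Comparison Theorem~\ref{thm:comparison}.

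First I would record the elementary two-sided estimate relating the Grassmann norm $\|a\|$ of a super $\lambda$-length to its body $\epsilon(a)$. On the one hand $|\epsilon(a)|\le\|a\|$, since $\epsilon(a)$ is the body component of $a$. On the other, fixing $\delta>0$ and writing $a=\epsilon(a)+s(a)$, Theorem~\ref{thm:comparison} gives $\|a\|\le|\epsilon(a)|+\|s(a)\|\le|\epsilon(a)|+M_{\delta,\phi}\,|\epsilon(a)|^{1+\delta}$. The classical $\lambda$-length spectrum is discrete (Theorem~\ref{thm:bow}), so all but finitely many complementary regions satisfy $\epsilon(a)\ge 1$, and for those $\|a\|\le(1+M_{\delta,\phi})\,\epsilon(a)^{1+\delta}$; taking logarithms, there is a constant $c_\delta$ with
\[
\log\epsilon(a)\ \le\ \log\|a\|\ \le\ (1+\delta)\log\epsilon(a)+c_\delta
\]
for every complementary region $\mathbf a$ outside a finite set $S=S_{\delta,\phi}\subset\mathcal F(\Omega)$.

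Next I would transfer the count to the body. Writing $N^{\mathrm{cl}}_{\epsilon(\rho)}(L):=\#\{\mathbf a\in\mathcal F(\Omega):\log\epsilon(a)<L\}$, the displayed inequalities give at once
\[
N^{\mathrm{cl}}_{\epsilon(\rho)}\!\Bigl(\tfrac{L-c_\delta}{1+\delta}\Bigr)-|S|\ \le\ N_\rho(L)\ \le\ N^{\mathrm{cl}}_{\epsilon(\rho)}(L)+|S|.
\]
Now by \eqref{el} and \eqref{eq:lambdavstrace} one has $\epsilon(a)=\epsilon(ah)/\epsilon(h)=2\cosh(\tfrac12\ell^{\mathrm{cl}}_{\gamma_a})/\epsilon(h)$, where $\ell^{\mathrm{cl}}_{\gamma_a}$ is the length of $\gamma_a$ in the hyperbolic structure $\epsilon(\rho)$ and $\epsilon(h)>0$ is the classical semi-perimeter; hence $\log\epsilon(a)$ is a strictly increasing function of $\ell^{\mathrm{cl}}_{\gamma_a}$ whose inverse sends $L$ to $T(L):=2L+O(1)$ as $L\to\infty$. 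Since the ideal arcs of $F_1^1$ — equivalently the elements of $\mathcal F(\Omega)$ — are in bijection with the simple closed curves of $F$, the quantity $N^{\mathrm{cl}}_{\epsilon(\rho)}(L)$ equals the number of simple closed geodesics of length less than $T(L)$ on the once-punctured hyperbolic torus determined by $\epsilon(\rho)$. Mirzakhani's asymptotic count of simple closed geodesics \cite{maryam}, specialised to $F_1^1$, then yields $N^{\mathrm{cl}}_{\epsilon(\rho)}(L)=\eta(\epsilon(\rho))\,L^2+o(L^2)$ for a positive constant $\eta(\epsilon(\rho))$ depending continuously and properly on $\epsilon(\rho)$.

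Feeding this into the sandwich and dividing by $L^2$ gives
\[
\frac{\eta(\epsilon(\rho))}{(1+\delta)^2}\ \le\ \liminf_{L\to\infty}\frac{N_\rho(L)}{L^2}\ \le\ \limsup_{L\to\infty}\frac{N_\rho(L)}{L^2}\ \le\ \eta(\epsilon(\rho)),
\]
and since $\delta>0$ is arbitrary, letting $\delta\to 0$ shows that the limit exists and $\eta(\rho)=\eta(\epsilon(\rho))$, whence continuity and properness in $\epsilon(\rho)$ follow from the classical statement. The only ingredient that is not a formal manipulation is this classical quadratic asymptotics together with the continuity and properness of its leading coefficient, and that is where I expect the main care to be needed: isolating the correct reference for the once-punctured torus — Mirzakhani's theorem, or the earlier Markoff/Farey-combinatorial accounts most in the spirit of the present paper — and checking that passing from $\log\|a\|$ to $\log\epsilon(a)$, along with the $O(1)$ length shifts, leaves the leading coefficient untouched.
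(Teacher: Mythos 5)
Your proposal is correct and follows essentially the same route as the paper: both bound $N_\rho(L)$ above using $\|a\|\ge\epsilon(a)$ and below using the Body--Soul Comparison Theorem to squeeze $N_\rho(L)$ between $N_{\epsilon(\rho)}(L)$ and $N_{\epsilon(\rho)}\bigl(\frac{L-O(1)}{1+\delta}\bigr)$, then send $\delta\to 0$ and appeal to the classical quadratic growth rate. The only cosmetic difference is the classical reference invoked — the paper cites McShane--Rivin and Zagier's Markoff-triple analysis rather than Mirzakhani, which you flagged as a possibility yourself.
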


\begin{proof}
Since $\|a\|>\epsilon(a)$,  $N_\rho(L)$ must be less than or equal to its classical analogue $N_{\epsilon(\rho)}(L)$. On the other hand, the Body-soul Comparison Theorem 
\ref{thm:comparison} tells us that $N_\rho(L)\geq N_{\epsilon(\rho)}(\frac{L-\log M_\delta}{1+\delta})$, and we thus have the following sandwich
\begin{align*}
\lim_{L\to\infty}\frac{N_{\epsilon(\rho)}(L)}{L^2}
\geq\lim_{L\to\infty}\frac{N_\rho(L)}{L^2}
&\geq\lim_{L\to\infty}\frac{N_{\epsilon(\rho)}(\frac{L-\log M_\delta}{1+\delta})}{L^2}\\
&=\frac{1}{(1+\delta)^2}
\lim_{L\to\infty}\frac{N_{\epsilon(\rho)}(L)}{L^2}.
\end{align*}
Since $\delta>0$ may be taken to be arbitrarily small, the (dominant term in the) asymptotic growth rate of $N_\rho(L)$ is equivalent to that of $N_{\epsilon(\rho)}(L)$. The asymptotic behavior of $N_{\epsilon{\rho}}(L)$ is well-understood: it is the asymptotic behavior of the number of simple closed geodesics shorter than $L$ on the hyperbolic surface corresponding to $\epsilon(\rho)$ (see \cite{Mgro}). McShane-Rivin establish quadratic growth rate in \cite{MR} by relying on Zagier's analysis of Markoff triples \cite {zagier}.
\end{proof}

\section{Proof of the super McShane identity}
\label{sec:proof}

The main goal of this section is to prove 

\begin{theorem}
\label{thm:supermcshane}
Fix a once-punctured super torus $F$ and let $\mathcal{S}(F)$ denote the set of free homotopy classes of simple closed curves on $F$.  Then
\begin{align}
\frac{1}{2}
=\sum_{\gamma\in \mathcal{S}(F)}
\left(
\frac{1}{e^{\ell_\gamma}+1}
+\frac{W_\gamma}{4}~\frac{\sinh\frac{\ell_\gamma}{2}}
{\cosh^2\frac{\ell_\gamma}{2}}
\right)\, ,
\end{align}
where $\ell_\gamma$ is the super length of $\gamma$, and $W_\gamma:=W_c$ for the unique ideal arc $c$ disjoint from $\gamma$.
\end{theorem}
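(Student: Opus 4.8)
The plan is to transport Bowditch's combinatorial proof of McShane's identity (\cite{Bpro,Bmar}) to the super setting, using Proposition~\ref{thm:finitesum} as the combinatorial engine and the Body--soul Comparison Theorem~\ref{thm:comparison} (together with Lemmas~\ref{thm:spiralbound}--\ref{thm:technicalcompare} and Theorem~\ref{thm:bow}) as the analytic engine. By the discussion around Remark~\ref{rmk:nongeneric}, a super Markoff map $\phi$ orients the trivalent tree $T_\infty$ (the $1$-skeleton of $\Omega$) with a unique sink vertex $v_0$, every edge pointing toward $v_0$; the non-generic flexible-edge case will be recovered at the end by continuity. Let $T_n\subset T_\infty$ be the ball of radius $n$ about $v_0$, so that $C(T_n)$ is exactly the set of edges from level $n$ to level $n+1$ carrying their natural inward orientation. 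Proposition~\ref{thm:finitesum} gives $\sum_{\vec e\in C(T_n)}\psi(\vec e)=1$ for every $n$, and the whole theorem amounts to evaluating $\lim_{n\to\infty}$ of this sum and matching it with the right-hand side.

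To evaluate the limit I would organize $C(T_n)$ by the ends of $T_\infty$ that its edges lie on. These ends split into countably many \emph{rational ends} --- two for each $\gamma\in\mathcal S(F)$, being the two ends of the bi-infinite boundary path $\partial\mathbf c$ of the region $\mathbf c\in\mathcal F(\Omega)$ dual to the arc $c$ disjoint from $\gamma$ --- and uncountably many irrational ones. Fix $\gamma$; past the ``turning vertex'' of $\partial\mathbf c$ nearest $v_0$ one recovers the spiralling neighbours $\{\mathbf b_i\}$ of $\S$\ref{sec:split} with super $\lambda$-lengths $b_i=xr^i+yr^{-i}+\tfrac{cW_{b_{i-1}}}{ch\mp 2}$ (Equation~\eqref{eq:recursol}, with the fixed arc played by $c$), where $\epsilon(x),\epsilon(y)>0$ and, combining Equation~\eqref{eq:rvalue} with the length--supertrace formula~\eqref{el}, $r+r^{-1}=ch-W_c=2\cosh\tfrac{\ell_\gamma}{2}$, so that $r=e^{\ell_\gamma/2}$ and $hc=(r+r^{-1})+W_c$. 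For each large $n$ the unique edge of $C(T_n)$ lying on a given end of $\partial\mathbf c$ separates $\mathbf c$ from some $\mathbf b_i$ and points at a neighbouring $\mathbf b_{i'}$ with $|i|,|i'|\to\infty$; its $\psi$-value equals $\tfrac1h\bigl(\tfrac{b_{i'}}{c\,b_i}+\tfrac{W_c}{2c}+\tfrac{W_{b_i}}{2b_i}\bigr)$, and since consecutive ratios satisfy $b_{i'}/b_i\to r^{-1}$ and $W_{b_i}/b_i\to 0$ in norm --- here the Body--soul Comparison Theorem is used, guaranteeing $\|b_i^{-1}\|\to0$ once $\delta<1/N$ --- this converges to $\tfrac1{hc}\bigl(\tfrac1r+\tfrac{W_c}2\bigr)$. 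Using $W_c^2=0$, a short super-algebraic manipulation yields
\[
\frac1{hc}\Bigl(\frac1r+\frac{W_c}2\Bigr)
=\frac{r^{-1}+W_c/2}{(r+r^{-1})+W_c}
=\frac{1}{r^2+1}+\frac{W_c\,(r-r^{-1})}{2(r+r^{-1})^2}
=\frac{1}{e^{\ell_\gamma}+1}+\frac{W_\gamma}{4}\,\frac{\sinh\tfrac{\ell_\gamma}{2}}{\cosh^2\tfrac{\ell_\gamma}{2}},
\]
i.e.\ each of the two rational ends about $\gamma$ contributes, in the limit, exactly the asserted $\gamma$-summand. Granting the interchange of limit and sum, $1=\lim_n\sum_{\vec e\in C(T_n)}\psi(\vec e)=\sum_{\gamma\in\mathcal S(F)}2\bigl(\tfrac1{e^{\ell_\gamma}+1}+\tfrac{W_\gamma}4\tfrac{\sinh(\ell_\gamma/2)}{\cosh^2(\ell_\gamma/2)}\bigr)$, which is the super McShane identity.

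The main obstacle is precisely this interchange: one must show that, uniformly in $n$, the edges of $C(T_n)$ not lying deep on a rational end (the irrational ends and the bounded tails) contribute negligibly to $\sum_{\vec e\in C(T_n)}\psi(\vec e)$. I would argue exactly as in \cite{Bmar}, decomposing $\psi(\vec e)=\epsilon(\psi(\vec e))+s(\psi(\vec e))$ into body and soul. The body sums $\sum_{\vec e\in C(T_n)}\epsilon(\psi(\vec e))=1$ converge to the classical Bowditch/McShane gap decomposition: finiteness of each $\Omega_\phi(m)$ (Theorem~\ref{thm:bow}, via discreteness of the underlying length spectrum, Equation~\eqref{eq:lambdavstrace}) together with the geometric decay of $\epsilon(\psi)$ outside $\Omega_\phi(m)$ supplies the uniform control. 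For the soul sums, the Body--soul Comparison Theorem~\ref{thm:comparison} and the spiral estimates of Lemmas~\ref{thm:spiralbound}--\ref{thm:technicalcompare} bound $\|s(\psi(\vec e))\|$ by the corresponding classical quantity times harmless factors of the shape $\epsilon(\cdot)^{\delta}$; since the classical series converges geometrically while the number $N_\rho(L)$ of relevant regions grows only quadratically, choosing $\delta$ small keeps the bounding series summable, so $\sum_{\vec e\in C(T_n)}s(\psi(\vec e))$ converges in norm to the soul of the right-hand side. Assembling body and soul establishes the identity for generic $\phi$, and continuity of both sides in $[\rho]\in ST(F)$ --- the left side by Theorem~\ref{thm:super}, the right by Theorem~\ref{thm:comparison} plus the locally uniform convergence just obtained --- extends it to all of $ST(F)$, completing the proof.
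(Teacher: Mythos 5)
Your proposal is correct and follows essentially the same route as the paper: the finite subtree sums of Proposition~\ref{thm:finitesum} over the balls $T_n$ around the sink, identification of the limiting contribution of the two edges of $C(T_n)$ spiralling around each region $\mathbf{c}$ (your ``two rational ends per $\gamma$'' is exactly the paper's $2$-to-$1$ map $\iota_n$ onto $F(T_{n-1})$), error control via the Body--soul Comparison Theorem~\ref{thm:comparison} and the summability underlying Proposition~\ref{thm:absconvergence}, and the same final algebra $ch=r+r^{-1}+W_c$, $r=e^{\ell_\gamma/2}$ with $W_c^2=0$. The only differences are presentational: you take pointwise limits along ends via the recursion solution~\eqref{eq:recursol} and invoke a dominated-convergence/continuity argument (including for the flexible-edge case), whereas the paper solves the vertex relation to get a closed form for $\psi(\vec e)$ and bounds the $n$-th error-term series directly by $\sum C\,\epsilon(b)^{-1/2}$ over $F(\Omega)-F(T_{n-1})$, handling the non-generic case by excluding its single problematic pair.
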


\subsection{Absolute convergence}

We begin by showing that the super McShane identity expresses an absolutely convergent series. Although this step is not, strictly speaking, necessary for establishing the identity, it is nevertheless convenient for justifying any combinatorial manipulations we wish to conduct with the summation indices for our identity. We employ the following useful facts

\begin{lemma}\label{thm:usefulbounds}
For each $\mathbf{a}\in F(\Omega)$, let $a=\phi(\mathbf{a})$ denote its corresponding $\lambda$-length.  Then we have
\begin{itemize}
\item
for each $\delta>0$, there exists a constant $C_\delta$ such that for every $\mathbf{a}\in F(\Omega)$
\begin{align}
\left\|\tfrac{1}{a}\right\|< \tfrac{C_\delta}{\epsilon(a)^{1-\delta}}\,;\label{eq:reciprocal}
\end{align}
\item
there exists a constant $C>0$ such that for every $\mathbf{a}\in F(\Omega)$
\begin{align}
\left\|
\left(
\sqrt{1-\tfrac{4}{a^2h^2}}
\right)^{-1}
\right\|
\text{ and }
\left\|
\left(
1+\sqrt{1-\tfrac{4}{a^2h^2}}
\right)^{-1}
\right\|
<C.\label{eq:sqrtbound}
\end{align}
\end{itemize}
\end{lemma}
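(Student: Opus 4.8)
The plan is to handle the two bullet points in turn; in both cases the strategy is the same --- expand the quantity under control as a \emph{finite} Neumann (resp.\ binomial) series about its body, finite because souls in $\mathbb{R}_{s[N]}$ are nilpotent, and then estimate term by term. Write $a=\epsilon(a)+s(a)$ with $\epsilon(a)>0$ and use the standard properties of the Grassmann norm (Appendix): submultiplicativity, $\|1\|=1$, $\|s(y)\|\le\|y\|+|\epsilon(y)|$, and $s(y)^{N+1}=0$. Two geometric inputs will be used repeatedly. First, by \eqref{eq:lambdavstrace}, $\epsilon(ah)=\epsilon(\mathrm{tr}(\rho(\gamma_a)))=2\cosh(\ell_{\gamma_a}/2)$, and since the systole of the underlying hyperbolic surface $\epsilon(\rho)$ is a positive number $\mathrm{sys}>0$, this yields the uniform lower bound $\epsilon(ah)\ge 2\cosh(\mathrm{sys}/2)>2$; as $\epsilon(h)$ is a fixed positive constant, also $\epsilon(a)\ge\epsilon_0>0$ uniformly. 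Second, the discreteness of the trace spectrum (already invoked in Theorem~\ref{thm:bow}) means that only finitely many complementary regions have $\epsilon(a)$ below any fixed threshold.

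For \eqref{eq:reciprocal}, write $\tfrac1a=\epsilon(a)^{-1}\sum_{k\ge0}(-s(a)/\epsilon(a))^k$ (a finite sum), so that $\|\tfrac1a\|\le\epsilon(a)^{-1}\sum_{k\ge0}(\|s(a)\|/\epsilon(a))^k$. Feeding the parameter $\delta/N$ into the Body--soul Comparison Theorem~\ref{thm:comparison} gives $\|s(a)\|\le M_{\delta/N}\,\epsilon(a)^{1+\delta/N}$, whence $(\|s(a)\|/\epsilon(a))^k\le M_{\delta/N}^{\,k}\,\epsilon(a)^{k\delta/N}\le M_{\delta/N}^{\,k}\,\epsilon(a)^{\delta}$ whenever $\epsilon(a)\ge1$ and $0\le k\le N$. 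Summing, $\|\tfrac1a\|\le\big(\sum_{k\ge0}M_{\delta/N}^{\,k}\big)\,\epsilon(a)^{\delta-1}$ for every $\mathbf a$ with $\epsilon(a)\ge1$; only finitely many $\mathbf a$ have $\epsilon(a)<1$, so enlarging the constant covers them, and \eqref{eq:reciprocal} follows (strictness being arranged by inflating $C_\delta$). Taking any fixed $\delta<1$ and combining with $\epsilon(a)\ge\epsilon_0$ shows in particular that $\|\tfrac1a\|$ is \emph{uniformly bounded} over all $\mathbf a$; this is the key consequence used for the second bullet.

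For \eqref{eq:sqrtbound}, set $x:=1-\tfrac{4}{a^2h^2}=1-4(ah)^{-2}$. Its body is $\epsilon(x)=1-4/\epsilon(ah)^2\in\big[\,1-\cosh^{-2}(\mathrm{sys}/2),\,1\big)$, hence bounded away from $0$; and its soul satisfies $s(x)=-4\,s\big((ah)^{-2}\big)$, with $\|(ah)^{-2}\|\le\|h^{-1}\|^{2}\|a^{-1}\|^{2}$ uniformly bounded by the previous paragraph (here $\|h^{-1}\|$ is a constant, since $h$ does not depend on $\mathbf a$ and both $a,h$ are even, hence commute), so $\|s(x)\|$ is uniformly bounded. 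Since $\epsilon(x)>0$, the analytic expansions
\begin{align*}
\big(\sqrt{x}\big)^{-1}&=\epsilon(x)^{-1/2}\sum_{k\ge0}\binom{-1/2}{k}\Big(\tfrac{s(x)}{\epsilon(x)}\Big)^{k},\\
\big(1+\sqrt{x}\big)^{-1}&=\big(1+\sqrt{\epsilon(x)}\big)^{-1}\sum_{k\ge0}\frac{(-1)^k\,s(\sqrt x)^k}{\big(1+\sqrt{\epsilon(x)}\big)^{k}}
\end{align*}
are again finite sums, where $s(\sqrt x)$ is a finite polynomial in $s(x)/\epsilon(x)$ with no constant term, hence uniformly bounded in norm. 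Each summand above is therefore uniformly bounded: $\epsilon(x)$ and $1+\sqrt{\epsilon(x)}\ge 1+\sqrt{1-\cosh^{-2}(\mathrm{sys}/2)}>1$ are bounded away from $0$, while $\|s(x)\|$ and $\|s(\sqrt x)\|$ are bounded. Taking $C$ larger than the two resulting bounds finishes the proof.

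The real work here is not any individual estimate but the uniformity bookkeeping, and within it the two load-bearing facts are: (i) the uniform lower bound $\epsilon(ah)\ge 2\cosh(\mathrm{sys}/2)>2$, which keeps $\epsilon(a)$, $\epsilon(x)$ and $1+\sqrt{\epsilon(x)}$ away from their degenerate values and rests on the positivity of the systole together with the discreteness of the length spectrum used for Theorem~\ref{thm:bow}; and (ii) the freedom to feed an arbitrarily small parameter $\delta/N$ into Theorem~\ref{thm:comparison}, which is exactly what converts the a~priori $\epsilon(a)^{k\delta/N}$ growth of the Neumann-series terms into a genuinely bounded (rather than merely polynomially controlled) estimate, and hence gives the uniform bound on $\|1/a\|$ that drives the second part.
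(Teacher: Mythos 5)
Your proof is correct and follows essentially the same strategy as the paper: for the first bound, a truncated Neumann expansion of $1/a$ combined with the Body--soul Comparison Theorem at parameter $\delta/N$; for the second, the twin inputs of a uniform upper bound on $\|1/a\|$ (from the first bullet) and a uniform lower bound on $\epsilon(ah)>2$ (from positivity of the systole and discreteness of the classical length spectrum). The only cosmetic divergence is in the second bullet, where the paper wraps these two uniformity facts into a compactness-plus-continuity argument on a compact subset of $\{x:\epsilon(x)>0\}$, whereas you unpack the same content into explicit finite binomial expansions of $(\sqrt{x})^{-1}$ and $(1+\sqrt{x})^{-1}$ estimated term by term.
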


\begin{proof}
The first inequality is a consequence of Theorem~\ref{thm:comparison}. To see this, observe that
\begin{align*}
\frac{1}{a}
=\frac{1}{\epsilon(a)}
\cdot
\frac{1}{1+\frac{s(a)}{\epsilon(a)}}
=\frac{1}{\epsilon(a)}\cdot
\left(
1-\tfrac{s(a)}{\epsilon(a)}+\ldots
+(-1)^{\lfloor N\rfloor}
\left(\tfrac{s(a)}{\epsilon(a)}\right)^{\lfloor N\rfloor}
\right).
\end{align*}
Let $M$ be a bounding constant satisfying Theorem~\ref{thm:comparison} for $\frac{\delta}{\lfloor N\rfloor}$, so that
\begin{align*}
\left\|
\frac{1}{a}
\right\|
&\leq
\frac{1}{\epsilon(a)}
+\left\|\frac{s(a)}{\epsilon(a)^2}\right\|
+\ldots
+\left\|\frac{s(a)^{\lfloor N\rfloor}}{\epsilon(a)^{\lfloor N\rfloor+1}}\right\|\\
&\leq
\frac{1}{\epsilon(a)}
+\frac{M}{\epsilon(a)^{1-\frac{\delta}{\lfloor N\rfloor}}}
+\ldots
+\frac{M^{\lfloor N\rfloor}}{\epsilon(a)^{1-\delta}}.
\end{align*}
The rightmost term in the above sum is dominant and we may therefore find $C_\delta$ satisfying \eqref{eq:reciprocal}, as required.\medskip

We derive the second inequality via a compactness argument and first observe that $\left\|
1-\tfrac{4}{a^2h^2}
\right\|
\leq
1+4\left\|\tfrac{1}{a}\right\|^2\left\|\tfrac{1}{h}\right\|^2$. Thanks to \eqref{eq:reciprocal}, this is bounded above by a constant independent of $\mathbf{a}\in F(\Omega)$. Moreover, $\epsilon(ah)=2\cosh(\frac{\epsilon(\ell_{\gamma})}{2})>2$, and the discreteness (and strict positivity) of the classical $\lambda$-length spectrum ensures that the points $\{1-\frac{4}{a^2h^2}\}_{\mathbf{a}\in F(\Omega)}$ lie within a compact subset of 
\[
\{x\in\mathbb{R}_{S[N]}\mid \epsilon(x)>0\}
\]
with respect to the na\"{i}ve topology on $\mathbb{R}_{S[N]}$. Since $f(x)=\|\frac{1}{\sqrt{x}}\|$ and $g(x)=\|\frac{1}{1+\sqrt{x}}\|$ are well-defined and continuous (for the na\"{i}ve topology) functions on $\{x\in\mathbb{R}_{S[N]}\mid \epsilon(x)>0\}$, the aforementioned compactness then ensures the existence of the requisite upper bound in \eqref{eq:sqrtbound}.

\end{proof}

\begin{proposition}\label{thm:absconvergence}
Given a once-punctured super torus $F$, the series
\begin{align}
\sum_{\mathbf{a}\in F(\Omega)}
\left\|
\frac{1}{ahr_{\mathbf{a}}}
+\frac{W_a}{2ah}
\right\|
\end{align}
 is absolutely convergent.
\end{proposition}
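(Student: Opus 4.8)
## Proof plan for Proposition~\ref{thm:absconvergence}

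The plan is to reduce the absolute convergence of the super series to the absolute convergence of its classical ($\epsilon$-body) counterpart, which is well-understood from Bowditch's treatment of the McShane identity. First I would recall that each term $\|\tfrac{1}{ahr_{\mathbf a}} + \tfrac{W_a}{2ah}\|$ lives in the Grassmann-normed ring $\mathbb{R}_{S[N]}$, and that by the triangle inequality for $\|\cdot\|$ it suffices to bound the two summands separately. For the first summand, I would use $r_{\mathbf a} = xr^{|\cdot|}+\ldots$ only implicitly; more directly, from equation~\eqref{eq:rvalue} one has $r_{\mathbf a}+r_{\mathbf a}^{-1} = ah - W_a$, so $ahr_{\mathbf a}$ can be rewritten in terms of $ah$ and $\sqrt{1-\tfrac{4}{a^2h^2}}$ via the explicit factorization already displayed after \eqref{eq:rvalue}, namely $r = \bigl(\tfrac{ah+\sqrt{a^2h^2-4}}{2}\bigr)\bigl(1-\tfrac{W_a}{\sqrt{a^2h^2-4}}\bigr)$. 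Inverting and multiplying by $\tfrac{1}{ah}$, one sees $\tfrac{1}{ahr_{\mathbf a}}$ is a product of $\tfrac{1}{a^2h^2}$, a bounded factor of the form $\bigl(1+\sqrt{1-\tfrac{4}{a^2h^2}}\bigr)^{-1}$ (here I invoke Lemma~\ref{thm:usefulbounds}\eqref{eq:sqrtbound}), and a factor $\bigl(1-\tfrac{W_a}{\sqrt{a^2h^2-4}}\bigr)^{-1}$ whose soul-part is controlled since $W_a$ is nilpotent and $\sqrt{a^2h^2-4}$ has body bounded away from zero.

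The key estimates are then: by Lemma~\ref{thm:usefulbounds}\eqref{eq:reciprocal}, for each $\delta>0$ there is $C_\delta$ with $\|\tfrac{1}{a}\| < C_\delta \epsilon(a)^{-(1-\delta)}$, and since $h$ is a fixed invariant with $\epsilon(h)>0$ the same kind of bound holds for $\|\tfrac{1}{ah}\|$ up to an $\mathbf a$-independent constant. Combining with \eqref{eq:sqrtbound} and the nilpotency control on the $W_a$-factor, I would conclude that there is a constant $C'_\delta$ with
\[
\left\|\frac{1}{ahr_{\mathbf a}}+\frac{W_a}{2ah}\right\| \le \frac{C'_\delta}{\epsilon(ah)^{2-\delta}}
\]
for every $\mathbf a\in F(\Omega)$, where I also used $\epsilon(W_a)=0$ so the second summand satisfies $\|\tfrac{W_a}{2ah}\| \le \|W_a\|\,\|\tfrac{1}{2ah}\| \le C''_\delta\,\epsilon(ah)^{-(1-\delta)}$, which is dominated by the first. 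Finally, via \eqref{eq:lambdavstrace} and the relation $\epsilon(ah) = 2\cosh(\tfrac{\epsilon(\ell_\gamma)}{2})$, the sum $\sum_{\mathbf a} \epsilon(ah)^{-(2-\delta)}$ is comparable to $\sum_\gamma e^{-\epsilon(\ell_\gamma)(2-\delta)/2}$ over simple closed geodesics on the underlying hyperbolic once-punctured torus; for $\delta$ small this converges because the classical McShane series $\sum_\gamma (e^{\epsilon(\ell_\gamma)}+1)^{-1}$ converges (equivalently, the simple length spectrum grows quadratically by the Corollary on asymptotic growth rate, so $\sum e^{-s\epsilon(\ell_\gamma)}$ converges for all $s>0$).

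The main obstacle I anticipate is not any single inequality but the bookkeeping needed to show that the factor $\bigl(1-\tfrac{W_a}{\sqrt{a^2h^2-4}}\bigr)^{-1}$ has $\|\cdot\|$-norm bounded \emph{uniformly} in $\mathbf a$: one must expand the geometric series in the nilpotent $\tfrac{W_a}{\sqrt{a^2h^2-4}}$, and verify that $\|W_a\|$ and $\|\tfrac{1}{\sqrt{a^2h^2-4}}\|$ are each bounded by $\mathbf a$-independent constants. The first is immediate since $W_a$ is a product of two fixed fermionic generators (hence $\|W_a\|$ takes only finitely many values as $\mathbf a$ varies, being governed by the spin structure), and the second follows from Lemma~\ref{thm:usefulbounds}\eqref{eq:reciprocal} together with $\epsilon(a^2h^2-4) = \epsilon(ah)^2 - 4 \ge (2\cosh(\tfrac{\epsilon(\ell_\gamma)}{2}))^2 - 4 > 0$ being bounded away from zero by discreteness of the length spectrum. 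Once these uniform bounds are in hand, the rest is the routine comparison described above.
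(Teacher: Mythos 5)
Your approach is essentially the same as the paper's: apply the triangle inequality, bound each summand via Lemma~\ref{thm:usefulbounds}, then convert to a convergent series over the simple length spectrum using the quadratic growth rate of \cite{MR}. The paper simply fixes $\delta=\tfrac12$ and settles for the weak bound $O(\epsilon(a)^{-1/2})$ on each summand, then compares polynomially against $\epsilon(\ell_\gamma)^{-3}$, whereas you track the powers more carefully (using the explicit factorization of $r$ after equation~\eqref{eq:rvalue} to uniformly control the nilpotent $W_a$-factor) and compare exponentially; both routes work.

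There is, however, a direction error in your bookkeeping. From your own estimates $\left\|\tfrac{1}{ahr_{\mathbf a}}\right\| = O\bigl(\epsilon(ah)^{-(2-\delta)}\bigr)$ and $\left\|\tfrac{W_a}{2ah}\right\| = O\bigl(\epsilon(ah)^{-(1-\delta)}\bigr)$, the \emph{second} summand dominates the first (not the other way around), since $\epsilon(ah)>2$. The bound you assert for the whole expression, $\left\|\tfrac{1}{ahr_{\mathbf a}}+\tfrac{W_a}{2ah}\right\|\le C'_\delta/\epsilon(ah)^{2-\delta}$, is therefore too strong and should read $C'_\delta/\epsilon(ah)^{1-\delta}$. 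There is no hidden cancellation that would rescue the sharper claim: the paper's later identity
\begin{align*}
\frac{1}{ahr_{\mathbf a}}+\frac{W_a}{2ah}
=\frac{1}{2}\left[1-\sqrt{1-\tfrac{4}{a^2h^2}}+\frac{W_a}{ah\sqrt{1-\tfrac{4}{a^2h^2}}}\right]
\end{align*}
shows the $W_a$-proportional part has norm of order $\epsilon(ah)^{-1}$, not $\epsilon(ah)^{-2}$. This slip is harmless for the final conclusion --- $\sum_{\mathbf a}\epsilon(ah)^{-(1-\delta)}$ still converges by the quadratic growth rate for $\delta$ small --- but you should state the correct exponent.
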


\begin{proof}
We make the following comparisons
\begin{align}
\left\|
\frac{1}{ahr_{\mathbf{a}}}
+\frac{W_a}{2ah}
\right\|
&\leq
\left\|
\frac{1}{ahr_{\mathbf{a}}}
\right\|
+
\left\|\frac{W_a}{2ah}
\right\|
\notag\\
&\leq
\left\|
\frac{1}{a}
\right\|^2
\left\|
\frac{1}{h}
\right\|^2
\left\|
\left(1+\sqrt{1-\tfrac{4}{a^2h^2}}\right)^{-1}
\right\|
+
\frac{1}{2}
\left\|\frac{1}{a}
\right\|
\left\|\frac{1}{h}
\right\|\nonumber
\end{align}
Fix some small $\delta$ such as $\delta=\frac{1}{2}$, Lemma~\ref{thm:usefulbounds} ensures that
\begin{align}
\sum_{\mathbf{a}\in F(\Omega)}
\left\|
\frac{1}{ahr_{\mathbf{a}}}
+\frac{W_a}{2ah}
\right\|
<
\sum_{\mathbf{a}\in F(\Omega)}
\frac{C_{\frac{1}{2}}}{\epsilon(a)^{\frac{1}{2}}}\label{eq:fincompare}
\end{align}
for some constant $C_{\frac{1}{2}}>0$. Finally, observe that
\begin{align*}
\epsilon(a)^{\frac{1}{2}}
=e^{\frac{1}{2}\log\epsilon(ah)}/\epsilon(h)^{\frac{1}{2}}
>\frac{(\frac{1}{2}\log\epsilon(ah))^3}
{3!\cdot\epsilon(h)^{\frac{1}{2}}}
=C(\log\epsilon(ah))^3>0.
\end{align*}
Since $\epsilon(ah)=2\cosh(\epsilon(\ell_{\gamma_a})/2)$, the spectrum of $\{\log\epsilon(ah)\}_{\mathbf{a}\in\Omega}$ asymptotically approaches the spectrum of simple geodesic lengths on the body of the once-punctured super torus. Therefore, we only need to show that $\sum_\gamma \epsilon(\ell_\gamma)^{-3}$ converges to show that the righthand side of \eqref{eq:fincompare} converges absolutely. From \cite{MR}, we know that the asymptotic growth rate of the number of simple closed geodesics of length less than $L$ is $CL^2+O(L\log L)$. Therefore, 
\begin{align*}
\sum_\gamma \epsilon(\ell_\gamma)^{-3}
&\approx
\sum_{L=1}^\infty
\frac{\#\left\{\text{number of $\gamma$ of length in $(L-1,L]$} \right\}}{L^3}\\
&\approx
\sum_{L=1}^\infty
\frac{CL^2-C(L-1)^2+O(L\log L)}{L^3}
<\text{Constant}\times 
\sum_{L=1}^\infty\frac{1}{L^{1.5}}<\infty.
\end{align*}
\end{proof}

\subsection{Proof of the super McShane identity}

As in the proof of Proposition~\ref{thm:absconvergence}, we first establish a few useful bounds.

\begin{lemma}
For any pair of adjacent $\mathbf{a},\mathbf{b}\in F(\Omega)$ and their corresponding $\lambda$-lengths $a=\phi(\mathbf{a})$ and $b=\phi(\mathbf{b})$, there exists a constant $C>0$ such that
\begin{align}
\left\|
\left(
\sqrt{1-\tfrac{4}{a^2h^2}}
+\sqrt{1-\tfrac{4}{a^2h^2}-\tfrac{4}{b^2h^2}}
\right)^{-1}
\right\|<&C\\
\text{ and }
\left\|
\left(
\sqrt{1-\tfrac{4}{a^2h^2}-\tfrac{4}{b^2h^2}}
\right)^{-1}
\right\|<&C.
\end{align}
\end{lemma}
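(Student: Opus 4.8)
The plan is to establish both bounds by the same compactness argument used in the proof of Lemma~\ref{thm:usefulbounds}, adapted to handle the extra term $\tfrac{4}{b^2h^2}$ coming from the neighboring region $\mathbf{b}$. First I would record that for a pair of adjacent complementary regions $\mathbf{a},\mathbf{b}$, the body inequality $\epsilon(ah),\epsilon(bh)\geq 3$ holds: indeed $\epsilon(ah)=2\cosh(\epsilon(\ell_{\gamma_a})/2)$ by \eqref{eq:lambdavstrace}, and Theorem~\ref{thm:bow} (specifically that $\Omega_\phi(3)$ is the relevant minimal region, with the systole bounded by $2\,\mathrm{arcosh}(\tfrac32)$) shows $\epsilon(ah),\epsilon(bh)\geq 3$, except possibly for the finitely many regions in $\Omega_\phi(3)$, which can be absorbed into the constant at the end. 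Consequently $\epsilon\!\left(1-\tfrac{4}{a^2h^2}-\tfrac{4}{b^2h^2}\right)\geq 1-\tfrac{4}{9}-\tfrac{4}{9}=\tfrac19>0$ and likewise $\epsilon\!\left(1-\tfrac{4}{a^2h^2}\right)\geq\tfrac59>0$, so that both arguments under the square roots have strictly positive body, uniformly in $\mathbf{a}$ (and its neighbor $\mathbf{b}$).

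Next I would control the souls. By Lemma~\ref{thm:usefulbounds}, $\|\tfrac1a\|,\|\tfrac1b\|,\|\tfrac1h\|$ are each bounded above by a constant independent of the region (again using discreteness of the classical $\lambda$-length spectrum to bound $\|\tfrac1h\|$ and to rule out accumulation near $0$). Hence $\left\|1-\tfrac{4}{a^2h^2}-\tfrac{4}{b^2h^2}\right\|\leq 1+4\|\tfrac1a\|^2\|\tfrac1h\|^2+4\|\tfrac1b\|^2\|\tfrac1h\|^2$ is bounded above uniformly, and similarly for $1-\tfrac{4}{a^2h^2}$. Combining this with the lower body bound, the set
\[
\left\{\left(1-\tfrac{4}{a^2h^2},\;1-\tfrac{4}{a^2h^2}-\tfrac{4}{b^2h^2}\right):\ \mathbf{a},\mathbf{b}\in F(\Omega)\ \text{adjacent}\right\}
\]
lies in a compact subset of $\{(x,y)\in\mathbb{R}_{S[N]}^2:\epsilon(x)>0,\ \epsilon(y)>0\}$ for the naive topology on $\mathbb{R}_{S[N]}$.

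Finally, the functions $G_1(x,y)=\left\|\left(\sqrt{x}+\sqrt{y}\right)^{-1}\right\|$ and $G_2(y)=\left\|\left(\sqrt{y}\right)^{-1}\right\|$ are well-defined and continuous (for the naive topology) on $\{\epsilon(x)>0,\ \epsilon(y)>0\}$ — the square roots are taken with positive body, $\sqrt{x}+\sqrt{y}$ has positive body hence is invertible, and the formal Neumann-type expansions converge — so on the compact set above they attain a maximum $C$, which proves both displayed inequalities (enlarging $C$ to cover the finitely many exceptional regions in $\Omega_\phi(3)$ if needed). The only mildly delicate point, and the one I would be most careful about, is verifying the uniform lower body bound $\epsilon(ah)\geq 3$ for \emph{adjacent} pairs: one must check that for any edge of $\Omega$ both of its two flanking regions $\mathbf{a},\mathbf{b}$ satisfy the systole bound, which follows since every complementary region's classical $\lambda$-length is at least the systole, and the systole of a once-punctured hyperbolic torus is at least $2\,\mathrm{arcosh}(\tfrac32)$, i.e. $\epsilon(ah)\geq 3$; the rest is the same compactness boilerplate as in Lemma~\ref{thm:usefulbounds}.
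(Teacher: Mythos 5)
Your argument contains a genuine error in the very first step, and the way you propose to patch it does not work. You claim that ``the systole of a once-punctured hyperbolic torus is at least $2\,\mathrm{arcosh}(\tfrac32)$, i.e.\ $\epsilon(ah)\geq 3$.'' This is backwards: the systole of a once-punctured hyperbolic torus is at \emph{most} $2\,\mathrm{arcosh}(\tfrac32)$ (this maximum is what Theorem~\ref{thm:bow} invokes to show that $\Omega_\phi(3)$ is nonempty), and the systole can in fact be arbitrarily small. Consequently $\epsilon(ah)=2\cosh(\epsilon(\ell_{\gamma_a})/2)$ can be arbitrarily close to $2$, so there is no uniform lower bound of the form $\epsilon(ah)\geq 3$, and your estimate $\epsilon\!\left(1-\tfrac{4}{a^2h^2}-\tfrac{4}{b^2h^2}\right)\geq\tfrac19$ is unjustified. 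The further suggestion to ``absorb into the constant'' the regions in $\Omega_\phi(3)$ also fails: $\Omega_\phi(3)$ is finite, but each of its elements has \emph{infinitely many} neighbors in $F(\Omega)$, so the collection of adjacent pairs $(\mathbf{a},\mathbf{b})$ with one member in $\Omega_\phi(3)$ is infinite, and no finite enlargement of $C$ covers them. Worse, when $\gamma_a$ is short one has $\tfrac{4}{\epsilon(ah)^2}$ close to $1$, so the positivity of $\epsilon\!\left(1-\tfrac{4}{a^2h^2}-\tfrac{4}{b^2h^2}\right)$ is not at all obvious and requires a genuine argument exploiting the fact that $\mathbf{a},\mathbf{b}$ are dual to intersecting curves.

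What is actually needed — and what the paper supplies — is the identification
\[
\epsilon(\psi(\vec{e}))
=\epsilon\!\left(\tfrac{1}{2}\!\left(1-\sqrt{1-\tfrac{4}{a^2h^2}-\tfrac{4}{b^2h^2}}\,\right)\right),
\]
for the oriented edge $\vec{e}$ wedged between $\mathbf{a}$ and $\mathbf{b}$ with its head closer to the sink $v_0$. The Bowditch orientation scheme of \S\ref{sec:orientedtree} forces $\epsilon(\psi(\vec{e}))<\tfrac12$ strictly (the equality case is exactly the non-generic flexible-edge situation of Remark~\ref{rmk:nongeneric}, excluded by hypothesis), which yields the strict positivity of the body. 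Combined with the upper bound on $\bigl\|1-\tfrac{4}{a^2h^2}-\tfrac{4}{b^2h^2}\bigr\|$ from Lemma~\ref{thm:usefulbounds} and discreteness of the classical $\lambda$-length spectrum, the compactness argument of \eqref{eq:sqrtbound} then goes through. Your compactness and continuity boilerplate at the end is fine; it is the input lower bound on the body that is missing.
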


Strictly speaking, the expression $\sqrt{1-\tfrac{4}{a^2h^2}-\tfrac{4}{b^2h^2}}$ is ill-defined if 
\[
\epsilon(1-\tfrac{4}{a^2h^2}-\tfrac{4}{b^2h^2})=0. 
\]
This only occurs in the non-generic case (Remark~\ref{rmk:nongeneric}), and even then, it occurs for at most one pair of $\mathbf{a},\mathbf{b}$. Our statement above therefore excludes this pair in the non-generic case.

\begin{proof}
The proof for these comparisons follows from the same compactness argument as the proof of \eqref{eq:sqrtbound}. We require the the following observations
\begin{itemize}
\item
$\|1-\tfrac{4}{a^2h^2}-\tfrac{4}{b^2h^2}\|$ is bounded above;
\item
 $\epsilon(1-\tfrac{4}{a^2h^2}-\tfrac{4}{b^2h^2})> 0$ as follows from the fact that
\begin{align}
\epsilon(\psi(\vec{e}))
=\epsilon\left(\frac{1}{2}
\left(
1-\sqrt{1-\tfrac{4}{a^2h^2}-\tfrac{4}{b^2h^2}}
\right)\right)
\end{align}
for the oriented edge $\vec{e}$ wedged between $\mathbf{a}$ and $\mathbf{b}$, such that the head of $\vec{e}$ is closer to the sink $v_0$ than the tail.
\end{itemize}
The remainder of the proof proceeds as before.
\end{proof}

\begin{theorem}\label{thm:final}
For any once-puntured super torus $F$, we have
\begin{align}
\sum_{\mathbf{a}\in F(\Omega)}
\left(\frac{1}{ahr_{\mathbf{a}}}
+\frac{W_a}{2ah}\right)
=\frac{1}{2},
\end{align}
where $a=\phi(\mathbf{a})$ and $r_{\mathbf{a}}=\tfrac{1}{2}(ah-W_a+\sqrt{(ah-W_a)^2-4})$.
\end{theorem}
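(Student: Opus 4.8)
The plan is to deduce Theorem~\ref{thm:final} as the $n\to\infty$ limit of the finite subtree identities of Proposition~\ref{thm:finitesum}. Exhaust the oriented trivalent tree $T_\infty$ (the one-skeleton of $\Omega$, with the unique sink vertex $v_0$ produced in $\S$\ref{sec:orientedtree}) by the metric balls $T_0\subset T_1\subset\cdots$ centered at $v_0$, so that $\sum_{\vec e\in C(T_n)}\psi(\vec e)=1$ for every $n$; here $C(T_n)$ has $3\cdot 2^{n}$ elements, one for each direction in which $T_\infty$ escapes $T_n$. The goal is to show that as $n\to\infty$ this unit of mass redistributes onto the ends of $T_\infty$: each ``irrational end'' receives mass $0$, while each of the two ``rational ends'' bounding a complementary region $\mathbf{a}$ receives mass exactly $\tfrac{1}{ahr_{\mathbf{a}}}+\tfrac{W_a}{2ah}$, from which $1=\sum_{\mathbf{a}}2\bigl(\tfrac{1}{ahr_{\mathbf{a}}}+\tfrac{W_a}{2ah}\bigr)$ and hence the claimed value $\tfrac12$.

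The per-end computation is the explicit core. Fix a complementary region $\mathbf{a}$ with $\lambda$-length $a$; its boundary $\partial\mathbf{a}$ is a bi-infinite line $L_{\mathbf{a}}\subset T_\infty$ with consecutive vertices $(v_i)_{i\in\mathbb{Z}}$ and adjacent regions $(\mathbf{b}_i)$, whose $\lambda$-lengths $(b_i)$ obey the recursion~\eqref{eq:recursion} with explicit solution~\eqref{eq:recursol}. Writing $\vec{f}_i$ for the edge $v_iv_{i+1}$ of $L_{\mathbf{a}}$ oriented toward the $+$ end and $\vec{g}_i$ for the edge hanging off $v_i$ oriented toward $v_i$, the vertex relation and edge relation for $\psi$ combine into the telescoping identity $\psi(\vec{g}_{i+1})=\psi(\vec{f}_{i+1})-\psi(\vec{f}_i)$, so the behaviour of $\psi$ along $L_{\mathbf{a}}$ is governed by $\psi(\vec{f}_i)=\tfrac{1}{h}\bigl(\tfrac{b_{i+1}}{ab_i}+\tfrac{W_a}{2a}+\tfrac{W_{b_i}}{2b_i}\bigr)$. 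Substituting~\eqref{eq:recursol}, using that the soul part of $b_i$ stays bounded while its body grows like $\epsilon(r)^{|i|}$ and that $W_{b_i}=\pm W_{b_0}$, one gets $b_{i\pm1}/b_i\to r^{\mp1}=r_{\mathbf{a}}^{\mp1}$ as $i\to\pm\infty$, whence $\lim_{i\to+\infty}\psi(-\vec{f}_i)=\tfrac{1}{ahr_{\mathbf{a}}}+\tfrac{W_a}{2ah}$ and, symmetrically, $\lim_{i\to-\infty}\psi(\vec{f}_i)=\tfrac{1}{ahr_{\mathbf{a}}}+\tfrac{W_a}{2ah}$. Along the tree-geodesic from $v_0$ to an end of $L_{\mathbf{a}}$, the edge it crosses in $C(T_n)$ is eventually one of these $\vec{f}_i$'s, so its $\psi$-value converges to the displayed quantity; this is the mass of that rational end.

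It remains to assemble the limit. For an irrational end $\xi$, the body $\lambda$-lengths along the tree-geodesic toward $\xi$ are unbounded (equivalently, by~\eqref{eq:lambdavstrace} and Theorem~\ref{thm:bow}, the underlying simple length spectrum is unbounded in that direction), so Lemma~\ref{thm:spiralbound} together with the Body--soul Comparison Theorem~\ref{thm:comparison} drives the $\psi$-values of the relevant $C(T_n)$-edges to $0$ (cf.~\cite{labtan}); irrational ends contribute nothing. Interchanging $\lim_{n}$ with the sum over ends is justified by dominated convergence: Lemma~\ref{thm:usefulbounds} bounds each $\psi(-\vec{f}_i)$, uniformly in $i$, by a constant multiple of $\bigl\|\tfrac{1}{ahr_{\mathbf{a}}}+\tfrac{W_a}{2ah}\bigr\|$, and that majorizing series converges absolutely by Proposition~\ref{thm:absconvergence}. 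Since each region $\mathbf{a}$ accounts for exactly its two rational ends, each of mass $\tfrac{1}{ahr_{\mathbf{a}}}+\tfrac{W_a}{2ah}$, we obtain $1=2\sum_{\mathbf{a}}\bigl(\tfrac{1}{ahr_{\mathbf{a}}}+\tfrac{W_a}{2ah}\bigr)$, which is the theorem.

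The main obstacle is precisely this last accounting step: at every finite stage an edge of $C(T_n)$ lies on the tree-geodesics to uncountably many ends at once --- the single rational end it heads toward, plus a Cantor set of irrational ones --- so it takes real work, essentially the arguments reprised from~\cite{Bpro,Bmar} feeding on the quantitative estimates in Lemma~\ref{thm:spiralbound} and Theorem~\ref{thm:comparison}, to show that in the limit all the mass carried by such an edge is attributed to its rational end and none leaks into the irrational Cantor set. A secondary difficulty, more bookkeeping than substance, is that every identity above must be read in the Grassmann algebra $\mathbb{R}_{S[N]}$ rather than in $\mathbb{R}$: the telescoping identity for $\psi$ survives verbatim, but the limiting computations require controlling nilpotent tails at each stage, which is exactly the role of Theorem~\ref{thm:comparison} and Lemma~\ref{thm:usefulbounds}.
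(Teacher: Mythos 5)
Your overall structure is the paper's: exhaust the tree by balls $T_n$ around the sink, invoke Proposition~\ref{thm:finitesum}, identify for each region $\mathbf{a}$ the two edges of $C(T_n)$ lying on $\partial\mathbf{a}$, show each $\psi$-value along $\partial\mathbf{a}$ converges to $\tfrac{1}{ahr_{\mathbf{a}}}+\tfrac{W_a}{2ah}$ using the recursion solution~\eqref{eq:recursol}, and pass to the limit using the error estimates furnished by Lemma~\ref{thm:usefulbounds}, Theorem~\ref{thm:comparison}, and Proposition~\ref{thm:absconvergence}. Those are exactly the ingredients in the paper's Theorem~\ref{thm:final}.

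However, the difficulty you flag at the end --- ``all the mass carried by such an edge is attributed to its rational end and none leaks into the irrational Cantor set'' --- is an artifact of your own framing, and the paper sidesteps it entirely. Rather than assigning a ``mass'' to each boundary end of the trivalent tree (which, as you note, would require the sort of measure-theoretic bookkeeping in \cite{labtan}), the paper defines the concrete assignment $\iota_n: C(T_n)\to F(T_{n-1})$ sending each $\vec{e}\in C(T_n)$ to the closer of the two complementary regions it borders. This map is $2$-to-$1$ onto $F(T_{n-1})$, so every edge at every stage is assigned to a region --- equivalently, to one of its two ``rational ends'' --- and no mass ever touches an irrational end. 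The proof is then simply that
\[
\sum_{\mathbf{a}\in F(T_{n-1})}\sum_{\vec{e}\in\iota_n^{-1}(\{\mathbf{a}\})}
\left\|\psi(\vec{e})-\left(\tfrac{1}{ahr_{\mathbf{a}}}+\tfrac{W_a}{2ah}\right)\right\|
\]
tends to $0$, which the paper obtains by bounding each summand by a constant multiple of $\epsilon(b)^{-1/2}$ (where $\mathbf{b}$ is the farther region past $\vec{e}$) and summing --- absolute convergence of $\sum\epsilon(b)^{-1/2}$ then finishes the job. If you recast your end-by-end picture using $\iota_n$, your estimates are already sufficient, and there is no ``leakage'' to account for. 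One small correction: the ratio limit should read $b_{i\pm1}/b_i\to r^{\pm1}$ as $i\to\pm\infty$ (since $b_i\sim xr^i$ for $i\gg0$ and $b_i\sim yr^{-i}$ for $i\ll0$), though the $\psi$-limits you deduce from it are nevertheless correct because the quantity entering $\psi$ for the inward-oriented edge of $C(T_n)$ on each ray is $b_{i\mp1}/b_i$.
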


\begin{proof}
Let $v_0\in V(\Omega)$ denote the sink vertex, and let $T_n$ denote the finite tree whose edges consist of all the edges $E(\Omega)$ within $n$ edges from $v$. For each $T_n$, Proposition~\ref{thm:finitesum} confirms that
\begin{align}
\sum_{\vec{e}\in C(T_n)}\psi(\vec{e})=1.\label{eq:finitesum}
\end{align}
Our goal is to show that the above sum becomes a good approximation to the putative super McShane identity as $n$ tends to infinity. We first observe that there is a natural map $\iota_n:C(T_n)\to F(\Omega)$ which sends an oriented edge $\vec{e}\in C(T_n)$ to the complementary region bordered by $\vec{e}$ which is closer to $v_0$. For example in Figure~\ref{fig:proof}, the edge $\vec{e}$ maps to $\mathbf{c}$ as it is closer to $v_0$ than $\mathbf{b}$.\medskip

Consider an arbitrary complementary region $\mathbf{a}\in F(\Omega)$ and let $v_{\mathbf{a}}$ be the vertex on the boundary of $\mathbf{a}$ closest to $v_0$. If $T_n$ contains $v_0$ then there must be at least two edges in $C(T_n)$ lying on the boundary of $\mathbf{a}$. There can be at most one edge in $C(T_n)$ clockwise from $v_{\mathbf{a}}$ as $T_n$ will contain all but the farthest edge from $v_{\mathbf{a}}$; likewise, there can be at most one edge counter-clockwise from $v_{\mathbf{a}}$. It follows that $\iota_n$ is a $2$-to-$1$ map onto its image.  In fact, it is fairly straight-forward to verify that the image of $\iota_n$ is precisely $F(T_{n-1})$.\medskip

We now use $\iota_n$ to convert sums over $C(T_n)$ into sums over $F(T_{n-1})=\iota_n(C(T_n))$ and compare the difference of the corresponding summands. Our aim is to show that the \emph{$n$-th error term series} approaches $0$, that is,
\begin{align}
\lim_{n\to\infty}
\sum_{
\substack{
\mathbf{a}\in F(T_{n-1})\\
\vec{e}\in\iota_n^{-1}(\{\mathbf{a}\})}
}
\left(
\psi(\vec{e})
-\left(\frac{1}{ahr_{\mathbf{a}}}
+\frac{W_a}{2ah}\right)
\right) = 0.\label{eq:limit}
\end{align}

For $\vec{e}$ with head pointing towards $\mathbf{c}$ and bordering $\mathbf{a},\mathbf{b}$ with $\mathbf{a}$ closer to $v_0$ than $\mathbf{b}$, recall that 
\begin{align*}
\psi(\vec{e})
=\tfrac{c}{abh}+\tfrac{W_a}{2ah}+\tfrac{W_b}{2bh}
\end{align*}
and solve for $c$ in terms of $a$ and $b$ using the vertex relation
\[
c=\tfrac{1}{2}\left(abh-aW_b-bW_a\pm
\sqrt{(aW_b+bW_a-abh)^2-4(a^2+b^2+abW_c)}\right).
\]
Since $\epsilon(\frac{c}{ab})<\frac{1}{2}$, we must take the negative sign, and so
\begin{align}
\psi(\vec{e})
=\frac{1}{2}
\left[
1-\sqrt{1-\tfrac{4}{a^2h^2}-\tfrac{4}{b^2h^2}}
+\frac{\frac{W_a}{ah}+\frac{W_b}{bh}+\frac{2W_c}{abh^2}}
{\sqrt{1-\frac{4}{a^2h^2}-\tfrac{4}{b^2h^2}}}
\right].
\end{align}
Recall also that
\begin{align}
\frac{1}{ahr_{\mathbf{a}}}
+\frac{W_a}{2ah}
=\frac{1}{2}
\left[
1-\sqrt{1-\tfrac{4}{a^2h^2}}
+\frac{\frac{W_a}{ah}}
{\sqrt{1-\tfrac{4}{a^2h^2}}}
\right].
\end{align}
Taking the difference of these two equations yields
\begin{align}
\left\|
\psi(\vec{e})
-\left(\tfrac{1}{ahr_{\mathbf{a}}}
+\tfrac{W_a}{2ah}\right)
\right\|
\leq&
\left\|
\sqrt{1-\tfrac{4}{a^2h^2}}
-\sqrt{1-\tfrac{4}{a^2h^2}-\tfrac{4}{b^2h^2}}
\right\|\label{eq:term1}\\
+&
\left\|
\frac{\frac{W_a}{ah}}
{\sqrt{1-\frac{4}{a^2h^2}-\tfrac{4}{b^2h^2}}}
-\frac{\frac{W_a}{ah}}
{\sqrt{1-\tfrac{4}{a^2h^2}}}
\right\|\label{eq:term2}\\
+&
\left\|
\frac{\frac{W_b}{bh}+\frac{2W_c}{abh^2}}
{\sqrt{1-\frac{4}{a^2h^2}-\tfrac{4}{b^2h^2}}}
\right\|.\label{eq:term3}
\end{align}
The righthand term in equation~\eqref{eq:term1} in turn satisfies
\begin{align}
&\left\|
\sqrt{1-\tfrac{4}{a^2h^2}}
-\sqrt{1-\tfrac{4}{a^2h^2}-\tfrac{4}{b^2h^2}}
\right\|=
\left\|
\frac{\frac{4}{b^2h^2}}
{\sqrt{1-\tfrac{4}{a^2h^2}}
+\sqrt{1-\tfrac{4}{a^2h^2}-\tfrac{4}{b^2h^2}}}
\right\|\notag\\
\leq&
\left\|
\frac{4}{b^2h^2}
\right\|
\cdot
\left\|
\left({\sqrt{1-\tfrac{4}{a^2h^2}}
+\sqrt{1-\tfrac{4}{a^2h^2}-\tfrac{4}{b^2h^2}}}
\right)^{-1}
\right\|<\frac{C\cdot C_{\frac{\delta}{2}}}{\epsilon(b)^{2-\delta}}\,.
\end{align}
For the term in \eqref{eq:term2}, we have
\begin{align}
&\left\|
\frac{\frac{W_a}{ah}}
{\sqrt{1-\frac{4}{a^2h^2}-\tfrac{4}{c^2h^2}}}
-\frac{\frac{W_a}{ah}}
{\sqrt{1-\tfrac{4}{a^2h^2}}}
\right\|\notag\\
=&
\left\|
\frac{\frac{W_a}{ah}\left(\sqrt{1-\tfrac{4}{a^2h^2}}
-\sqrt{1-\tfrac{4}{a^2h^2}-\tfrac{4}{b^2h^2}}\right)}
{{\sqrt{1-\frac{4}{a^2h^2}-\tfrac{4}{b^2h^2}}}\cdot {\sqrt{1-\tfrac{4}{a^2h^2}}}}
\right\|<\frac{C\cdot C_{\delta}\cdot C_{\frac{\delta}{2}}}{\epsilon(a)^{1-\delta}\epsilon(b)^{2-\delta}}\,,
\end{align}
and for the term in \eqref{eq:term3} have
\begin{align}
&\left\|
\frac{\frac{W_b}{bh}+\frac{2W_c}{abh^2}}
{\sqrt{1-\frac{4}{a^2h^2}-\tfrac{4}{b^2h^2}}}
\right\|\notag\\
&\leq
\left\|
\frac{1}{bh}
\right\|
\cdot
\left(1+\left\|
\frac{2}{ah}
\right\|\right)
\cdot
\left\|
\left(
\sqrt{1-\tfrac{4}{a^2h^2}-\tfrac{4}{b^2h^2}}
\right)^{-1}
\right\|
<\frac{C\cdot C_{\delta}}{\epsilon(b)^{1-\delta}}\,.
\end{align}
Fixing $\delta=\frac{1}{2}$, it follows that each summand in the $n$-th error term series is bounded above by $C\epsilon(b)^{-\frac{1}{2}}$, for some constant $C>0$, where 
\[
\mathbf{b}\in F(T_n)-F(T_{n-1})\subset F(\Omega)-F(T_{n-1}).
\]
We may thus conclude
\begin{align*}
\left\|\sum_{
\substack{
\mathbf{a}\in F(T_{n-1})\\
\vec{e}\in\iota_n^{-1}(\{\mathbf{a}\})}
}
\left(
\psi(\vec{e})
-\left(\frac{1}{ahr_{\mathbf{a}}}
+\frac{W_a}{2ah}\right)
\right) 
\right\|
<\sum_{\mathbf{b}\in F(\Omega)-F(T_{n-1})}C\,
\epsilon(b)^{-\frac{1}{2}}.
\end{align*}
Since $\sum_{\mathbf{b}\in F(T_{n-1})}
\epsilon(b)^{-\frac{1}{2}}$ converges and $\cup_{n} F(T_n)=F(\Omega)$, equation~\eqref{eq:limit} must hold and hence
\begin{align}
\lim_{n\to\infty}
\sum_{
\substack{
\mathbf{a}\in F(T_{n-1})\\
\vec{e}\in\iota_n^{-1}(\{\mathbf{a}\})}
}
\psi(\vec{e})
=&
\lim_{n\to\infty}
\sum_{
\substack{
\mathbf{a}\in F(T_{n-1})\\
\vec{e}\in\iota_n^{-1}(\{\mathbf{a}\})}
}
\left(\frac{1}{ahr_{\mathbf{a}}}
+\frac{W_a}{2ah}\right),\\
{\rm which~gives}~~~~~~
\lim_{n\to\infty}
\sum_{
\vec{e}\in C(T_n)
}
\psi(\vec{e})
=&
\lim_{n\to\infty}
\sum_{\mathbf{a}\in F(T_{n-1})}
2\left(\frac{1}{ahr_{\mathbf{a}}}
+\frac{W_a}{2ah}\right)\\
{\rm whence}~
1=&
\sum_{\mathbf{a}\in F(\Omega)}
2\left(\frac{1}{ahr_{\mathbf{a}}}
+\frac{W_a}{2ah}\right),
\end{align}
where the last line here is obtained from substituting in \eqref{eq:finitesum}.
\end{proof}

\begin{proof}[Proof of Theorem~\ref{thm:supermcshane}]
The super McShane identity (Theorem~\ref{thm:supermcshane}) is equivalent to Theorem~\ref{thm:final} after substuting in the following formulae
\begin{align*}
ah=r_{\mathbf{a}}+r_{\mathbf{a}}^{-1}+W_a\text{ and }r_{\mathbf{a}}=e^{\frac{\ell_\alpha}{2}},
\end{align*}
where $\mathbf{a}$ denotes the complementary region corresponding to the simple closed curve $\alpha$.
\end{proof}

\end{document}